\newcommand{\defeq}{\> \lower 2pt \hbox{$\buildrel{\rm def} \over = $}\>}
\newtheorem{example}[theorem]{Example}
\newtheorem{remark}{Remark}
\title{ Competition between transients in the rate of approach to a fixed point}
\author{Judy Day\thanks{Mathematical Biosciences Institute, The Ohio State University, 1735 Neil Ave, Jennings Hall, Columbus, Ohio, 43210 ({\tt jday@mbi.osu.edu}).}
        \and Jonathan Rubin\thanks{Department of Mathematics, University of Pittsburgh, 301 Thackeray Hall, Pittsburgh, Pennsylvania, 15260 (rubin@math.pitt.edu).} \and Carson C. Chow\thanks{Laboratory of Biological Modeling, NIDDK, National Institutes of Health, Building 12A, Room 4007, 12 South Drive MSC 5621, Bethesda, Maryland, 20892 (carsonc@mail.nih.gov).}}
\begin{document}
\maketitle

\begin{abstract}
Dynamical systems studies of differential equations often focus on the
behavior of solutions near critical points and on invariant manifolds,
to elucidate the organization of the associated flow.  In addition,
effective methods, such as the use of Poincar\'{e} maps and phase
resetting curves, have been developed for the study of periodic
orbits.  However, the analysis of transient dynamics associated with
solutions {\em on their way} to an attracting fixed point has not
received much rigorous attention.  This paper introduces methods for
the study of such transient dynamics.  In particular, we focus on the
analysis of whether one component of a solution to a system of
differential equations can overtake the corresponding component of a
reference solution, given that both solutions approach the same stable
node.  We call this phenomenon {\em tolerance}, which derives from a certain biological effect.
Here, we establish certain general conditions, based on the initial
conditions associated with the two solutions and the properties of the
vector field, that guarantee that tolerance does or does not occur in
two-dimensional systems.  We illustrate these conditions in
particular examples, and we derive and demonstrate additional
techniques that can be used on a case by case basis to check for
tolerance.  Finally, we give a full rigorous analysis of tolerance in
two-dimensional linear systems.

\end{abstract}

\begin{keywords}
endotoxin tolerance, transient behavior, dynamical systems
\end{keywords}

\begin{AMS}
37C10, 70G60, 34C11
\end{AMS}

\pagestyle{myheadings}
\thispagestyle{plain}
\markboth{J. DAY, J. RUBIN AND C.C. CHOW}{COMPETITION IN RATE OF APPROACH TO A FIXED POINT}

\section{Introduction}
\label{Section: Introduction}

Relative to asymptotic behavior, transients have received little attention in the study of nonlinear dynamical systems. For example, how the rate of approach to a stable fixed point, away from the asymptotic limit, is affected by the choice of initial conditions within the basin of attraction of that fixed point has not to our knowledge been well characterized. In this work, we consider a comparison of the transient dynamics of pairs of trajectories with similar asymptotic behaviors.
The motivation for this work arises from a biological phenomenon known as tolerance, which refers to a reduction in the effect induced by the application of a substance, due to an earlier exposure to that substance. For example,  administration of a toxin to rodents, at a given reference dose, induces a reproducible acute inflammatory response featuring a rise in a variety of immune system elements followed by a return to near-baseline conditions \cite{Beeson47,Cross02,Schade99,WestHeagy02}. If a small pre-conditioning dose of the toxin is given to an animal prior to the reference dose then  the activation of immune agents by the reference dose is attenuated.  This phenomenon is called tolerance.

A previous study \cite{Day06} analyzed tolerance in the context of  a four dimensional ordinary differential equation (ODE) model of the acute inflammatory response. Within the four dimensional ODE model, the origin represents
a healthy equilibrium state, and the abrupt administration of a toxin is represented by a jump of a trajectory to another point in phase space. Thus, starting from a given initial condition,  tolerance occurs precisely when the sequence of a pre-conditioning dose, a period of ensuing flow, and a subsequent reference dose leads to a trajectory position that is different from the one attained by direct administration of the reference dose, and from which a lower level of activated immune agents ensues.
From the observation of tolerance in the acute inflammatory response model, we reasoned that similar tolerance effects should be a general feature of trajectories generated from different initial conditions by a dynamical system with negative feedback.
Little analysis has been done on transient effects such as tolerance, compared to the major emphasis
in dynamical systems research on invariant manifolds and other structures derived from asymptotic and local calculations \cite{Guckenheimer83,Wiggins94}.

Our goal in this work is to provide a framework for the study of
tolerance in ODE systems.
Specifically, we focus on trajectories converging to an asymptotically stable node.
Overall, we are interested in necessary and sufficient conditions for tolerance, as we formally define it in Section 2. In a one-dimensional or scalar ODE, uniqueness of solutions prevents tolerance from occurring. Thus, we examine tolerance in two-dimensional ODE systems, using geometrical approaches. The general two-dimensional nonlinear case, which is treated in Section 3, poses challenges, since exact analytical solutions are generally not available.
However, through the use of isoclines and the concept of inhibition, we give some general results on conditions when tolerance can or cannot occur and we develop an approach to the derivation of more precise results for particular models.
Specific examples are used here to illustrate this approach. In Section 4,  we take advantage of analytical solutions to provide a complete analysis of tolerance in two-dimensional linear systems.
We finish with conclusions and a brief discussion of related work in Section 5.

\section{Preliminaries}
\subsection{Definitions and assumptions}
\label{Section: def}
In this section we present our assumptions and give the precise mathematical definition that we use for tolerance.
Consider the autonomous ODE system
\begin{equation}
\left\{
\begin{array}{ccc}
\dot{x} & = & f(x,y) \\
\dot{y} & = & g(x,y),
\end{array}
\right.  \label{system}
\end{equation}
where $x,y\in \mathbb{R}$, and $f, g$ are locally Lipschitz.\\

\begin{description}
\item[(A1)] Assume that there exists a stable fixed point of $($\ref{system}$)$, the eigenvalues of which are real and negative (to eliminate spirals and centers).  Without loss of generality, we will take $(0,0)$ as the given stable fixed point of (\ref{system}).\\
\label{A1}
\end{description}

 Let $\Gamma^+_{(0,0)}$ be the basin of attraction of $(0,0)$ in the first quadrant, $\mathbb{R}^{2+}\defeq [0,\infty)\times[0,\infty)$:
\begin{equation*}
\Gamma^+_{(0,0)} =\mathbb{R}^{2+} \cap \{(x,y)|(x,y)\cdot t \rightarrow (0,0) \text{ as } t \rightarrow \infty\},
\end{equation*}
where the notation $(x,y)\cdot t$ is the image of the point $(x,y)$ under the flow of $($\ref{system}$)$ for time $t$. The set of points, $\{(x,y)\cdot t|t\geq 0\}$, is the solution curve or trajectory of the initial value problem with initial value $(x,y)$. This set is also referred to as the graph of the solution.

 Let $\phi (t) = (\phi_1(t),\phi_2(t))$ and $\psi (t)=(\psi_1(t),\psi_2(t))$ be two solutions
 to the initial value problem of (\ref{system}) with initial values
\begin{equation}
\phi (0)=(x_{r},y_{r})\text{, } x_{r}>0\text{, }y_{r}\geq 0
\label{x0GreaterThanZero}
\end{equation}
and
\begin{equation}
\label{eq:psi0}
\psi (0)=( x_p, y_p),  x_p>0\text{, }y_p\geq 0.
\end{equation}

\begin{description}
\item[(A2)] Assume that both components of $\phi(t)$ and $\psi(t)$ are nonnegative for all $t\geq0$ and that $(x_r,y_r)$ and $(x_p,y_p)$ $\in \Gamma^+_{(0,0)}$.\\
\label{A2}
\end{description}

\begin{description}
\item[(A3)]
Assume that $x_r$ and $x_p$ are chosen such that $x_p \geq x_r$.\\
\label{A3}
\end{description}

\begin{definition}
Define $\phi (t)$ as the reference (R) trajectory or solution.\\ \label{Definition_OriginalSolution}
\end{definition}

\begin{definition}
Define $\psi (t)$ as the pre-conditioned or perturbed (P) trajectory or solution.\\ \label{Definition_CompetingSolution}
\end{definition}

Essentially, we are interested in determining whether or not there
exists a time when the first component of a P trajectory overtakes that of an
R trajectory, given that it was initially behind,  as they approach the
origin. Our ensuing discussion would apply equally if we considered
the second component instead of the first. \\

\begin{definition}
The system $($\ref{system}$)$ is said to \emph{exhibit tolerance for }$ \langle(x_r,y_r),(x_p,y_p)\rangle$ if there exists $\tau >0$ such that $\psi_1(\tau)<\phi_1(\tau )$.
\label{Definition 1}
\end{definition}\\

\begin{definition}
If $\psi_1(t)\geq \phi_1(t)$ for all $t\in \lbrack 0,\infty )$, then $(\ref{system})$ \emph{does not exhibit tolerance for }$\langle(x_r,y_r),(x_p,y_p)\rangle$.
\label{Definition 2}
\end{definition}\\

\begin{remark}
We will also use the terminology that \emph{$(x_p,y_p)$ or $\psi$ produces (or does not produce) tolerance in $($\ref{system}$)$} with respect to $(x_r,y_r)$ or $\phi$ to mean that Definition \ref{Definition 1} (Definition \ref{Definition 2}) holds. Figure \ref{FIG: TolDefinitions} illustrates definitions \ref{Definition 1} and \ref{Definition 2} with time courses of the first component of solutions $\phi(t)$ and $\psi(t)$ for a given $\langle(x_r,y_r),(x_p,y_p)\rangle.$\\
\end{remark}

\begin{figure}[h]
  \centering
  \includegraphics[width=5in]{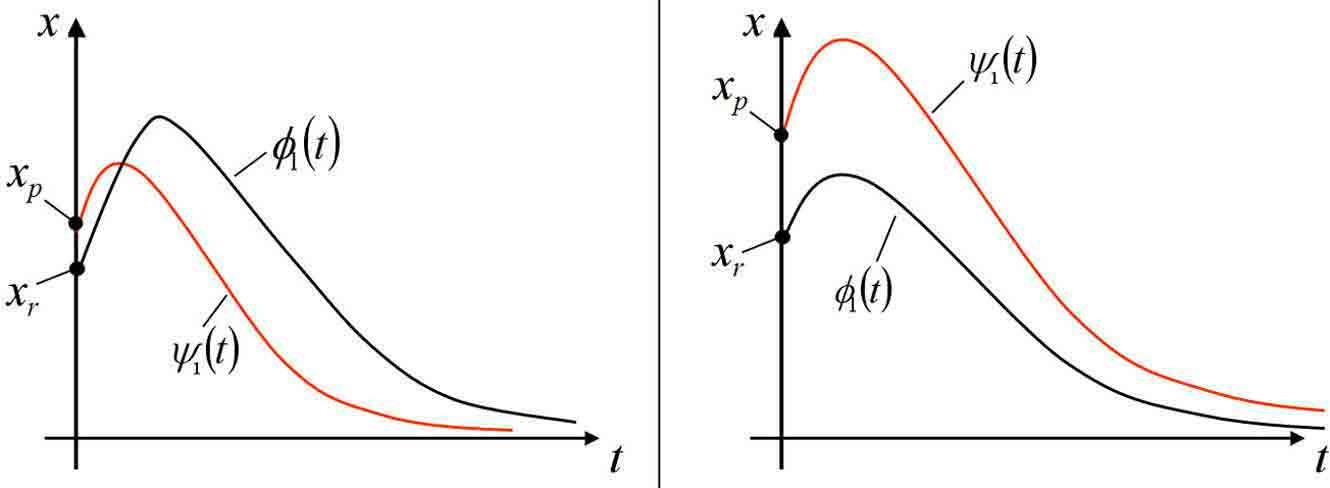}\\
  \caption{Illustration of Definitions \ref{Definition 1} and \ref{Definition 2}. Left (Right) panel:  Time course of the first component of a pre-conditioned (P) solution, $\psi(t)$, with initial condition $(x_p,y_p)$, which produces (does not produce) tolerance with respect to the reference (R) solution, $\phi(t)$, with initial condition $(x_r,y_r)$.}
  \label{FIG: TolDefinitions}
\end{figure}

\begin{remark}
Under (A3), $\psi(0) \defeq (x_p,y_p) \in [x_r,\infty) \times [0,\infty)$; that is, the initial value for the P solution
 could lie at any point on or to the right of the line $x=x_r$ in the first quadrant.
Correspondingly, we define $\Gamma^{x_r}_{(0,0)}$ to be the basin of attraction of $(0,0)$ in $[x_r,\infty)\times[0,\infty)\subset\mathbb{R}^{2+}$:
\begin{equation*}
\Gamma^{x_r}_{(0,0)} = \Gamma^+_{(0,0)} \cap [x_r,\infty)\times[0,\infty).
\end{equation*}

\end{remark}

\begin{remark}
The above definitions of tolerance are related to the biological setting that motivated this study through the interpretation of the P trajectory.
Consider a non-negative pre-conditioning solution $\rho(t)=(\rho_1(t),\rho_2(t))$  of (\ref{system}) with initial value
\begin{equation*}
\rho (0)=(x_\rho,y_\rho) \text{, } 0< x_{\rho}\leq x_r \text{, }0 \leq y_{\rho} \leq y_r.
\end{equation*}
 We then interpret the pre-conditioned solution $\psi (t)=(\psi_1(t),\psi_2(t))$ as the solution of (\ref{system}) with initial value
\begin{equation}
\label{eq:psi0_s}
\psi (0)=(x_p(s),y_p(s))\defeq \rho (s)+(x_h,y_h)\text{ for some }0\leq s<\infty,
\end{equation}
where $(x_h,y_h) \in \mathbf R^{2+}$.  If
$(x_h,y_h)=(x_r,y_r)$, which is typical for inflammation experiments,
 then for fixed $\phi(0)=(x_r,y_r)$ and $\rho(0)=(x_\rho,y_\rho)$, every $s$ defines a unique initial value for $\psi$ that satisfies (A3), namely $(x_p(s),y_p(s))$ as defined in
equation (\ref{eq:psi0_s}).
Thus, for a continuum  of $s$ values ranging from $0$ to $\infty $, a curve of possible $(x_p,y_p)$ values is formed, and it is of biological interest to know which of these $(x_p,y_p)$ lead to tolerance.
\end{remark}

\subsection{Properties of tolerance}
\label{Section: First Observations}

Definition \ref{Definition 1} refers only to the presence of tolerance at one time point $\tau >0$ such that $\psi_1(\tau)<\phi_1(\tau )$. However, continuity arguments can extend this window from a single time point to an open interval, $(t_1,t_2)$, around $\tau$, with $\psi_1(t_1)=\phi_1(t_1)$.
This observation is stated formally
 in Proposition \ref{Theorem 0} below and
 will be important in Section \ref{Tol2dNonLin}. Figure \ref{FIG: Theorem0} illustrates Proposition \ref{Theorem 0} with time courses of relevant solutions.\\

\begin{proposition}
\label{Theorem 0}Assume $($A1$)$, $($A2$)$, and $($A3$)$. If $($\ref{system}$)$ exhibits tolerance for $\langle(x_r,y_r),(x_p,y_p)\rangle$ at $\tau >0$, then there exists an open neighborhood $(t_1,t_2)$ around $\tau $ such that $\psi_1(\hat t)<\phi_1(\hat t)$ for every $\hat{t}\in (t_1,t_2)$ and $\psi_1(t_1)=\phi_1(t_1)$. Furthermore, $f(\psi (t_1))\leq f(\phi(t_1))$.
\end{proposition}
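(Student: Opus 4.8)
The plan is to track the sign of the scalar difference $D(t) \defeq \psi_1(t) - \phi_1(t)$ along the two trajectories. Since $f$ and $g$ are locally Lipschitz, both $\phi$ and $\psi$ are differentiable in $t$, so $D$ is continuous (indeed $C^1$), and by (A2) both solutions remain in $\Gamma^+_{(0,0)}$ for all $t\ge 0$, so $D$ is defined there. The hypotheses pin down the sign of $D$ at two times: assumption (A3) gives $D(0) = x_p - x_r \ge 0$, while the tolerance hypothesis at $\tau$ gives $D(\tau) < 0$. The whole argument is then an exercise in intermediate-value bookkeeping together with one elementary derivative estimate.

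Next I would locate $t_1$ as the last time before $\tau$ at which the first components agree. Concretely, set $t_1 \defeq \sup\{\, t \in [0,\tau] : D(t) \ge 0 \,\}$. This set is nonempty (it contains $0$) and closed (the preimage of $[0,\infty)$ under the continuous map $D$), so $t_1$ belongs to it and $D(t_1) \ge 0$; since $D(\tau) < 0$, this also forces $t_1 \neq \tau$, hence $t_1 < \tau$. By maximality of the supremum, $D(t) < 0$ for every $t \in (t_1,\tau]$, and continuity from the right then forces $D(t_1) \le 0$, whence $D(t_1) = 0$, i.e.\ $\psi_1(t_1) = \phi_1(t_1)$. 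To obtain the right endpoint I would take $t_2$ to be the first zero of $D$ strictly after $\tau$, or $t_2 = \infty$ if none exists; because $D(\tau) < 0$ and $D$ has no zero on $(\tau, t_2)$, continuity keeps $D < 0$ throughout $(\tau, t_2)$. Splicing this with $D < 0$ on $(t_1,\tau]$ yields $D < 0$ on the open interval $(t_1,t_2)$, which contains $\tau$, as required.

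For the final inequality I would examine the one-sided derivative of $D$ at $t_1$. Because $D(t_1) = 0$ and $D(t) < 0$ for $t$ slightly larger than $t_1$, each difference quotient $\big(D(t) - D(t_1)\big)/(t - t_1) = D(t)/(t - t_1)$ is negative for such $t$; letting $t \to t_1^+$ gives $D'(t_1) \le 0$. Substituting the governing equations $\dot{\psi}_1 = f(\psi)$ and $\dot{\phi}_1 = f(\phi)$ converts this into $f(\psi(t_1)) - f(\phi(t_1)) = D'(t_1) \le 0$, which is exactly the claimed $f(\psi(t_1)) \le f(\phi(t_1))$.

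The routine analytic ingredients—continuity of $D$, the supremum characterization, and the one-sided limit—carry the argument, so I do not expect a deep obstacle. The only point that demands care is the bookkeeping at $t_1$: I must extract \emph{simultaneously} the facts $D(t_1) = 0$ and $D < 0$ immediately to the right of $t_1$ from the single definition of the supremum, since it is precisely these two properties together that underwrite both the interval statement and the sign of $D'(t_1)$.
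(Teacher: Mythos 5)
Your proof is correct and takes essentially the same route the paper intends: the paper gives no formal proof of this proposition, offering only the remark that ``continuity arguments'' extend the single tolerance time to an open interval, and your write-up is exactly that argument made rigorous. The supremum construction of $t_1$, the intermediate-value bookkeeping for $t_2$, and the one-sided difference-quotient estimate yielding $f(\psi(t_1))\leq f(\phi(t_1))$ are all sound, so nothing is missing.
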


\begin{figure}[h]
\begin{center}
\includegraphics[width=3in]{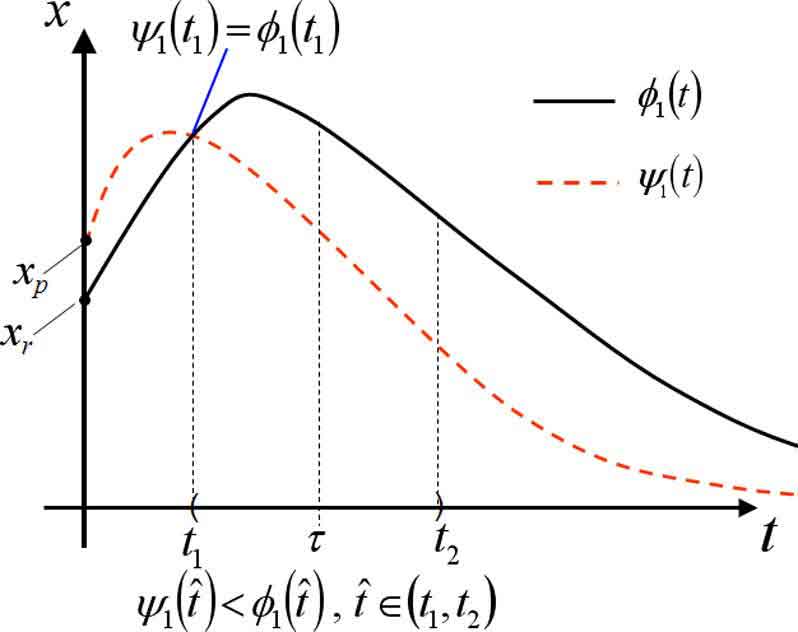}
\caption[Illustration for Proposition \protect\ref{Theorem 0}]{Time courses illustrating Proposition \protect\ref{Theorem 0}.  Note that in this example, $t_2$ could be chosen to be any $t>\tau$.}
\label{FIG: Theorem0}
\end{center}
\end{figure}
The window of tolerance can also be extended with respect to $(x_r,y_r)$ and $(x_p, y_p)$.\\

\begin{proposition}
Assume $($A1$)$, $($A2$)$, and $($A3$)$. If $($\ref{system}$)$ exhibits tolerance for $\langle(x_r,y_r),(x_p,y_p)\rangle$, then there exists an open ball, $B_r$, of radius $r$ around $(x_r,y_r)$ such that if $(x_k,y_k)\in B_r((x_r,y_r))\cap\Gamma^+_{(0,0)}$, then there exists a corresponding time $t_k>0$ such that tolerance is exhibited for $\langle(x_k,y_k),(x_p,y_p)\rangle$.
\label{THM: 1}
\end{proposition}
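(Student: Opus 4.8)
The plan is to argue by continuous dependence of solutions on initial conditions, applied at a single tolerance time. Since (\ref{system}) exhibits tolerance for $\langle(x_r,y_r),(x_p,y_p)\rangle$, Definition \ref{Definition 1} supplies a time $\tau >0$ with $\psi_1(\tau)<\phi_1(\tau)$; set $\delta \defeq \phi_1(\tau)-\psi_1(\tau)>0$. The essential point is that the P solution $\psi$ is held fixed throughout: only the reference initial condition is being perturbed, so it suffices to keep the perturbed reference solution's first component strictly above the fixed value $\psi_1(\tau)$ at the single instant $t=\tau$. This is much weaker than controlling the perturbed solution over an interval, and is exactly what makes the argument short.

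First I would denote by $\chi(t)=(\chi_1(t),\chi_2(t))$ the solution of (\ref{system}) with $\chi(0)=(x_k,y_k)$. Because $f$ and $g$ are locally Lipschitz and $\phi$ exists on all of $[0,\infty)$ (as $(x_r,y_r)\in\Gamma^+_{(0,0)}$), the standard continuous-dependence theorem guarantees that for initial data sufficiently near $(x_r,y_r)$ the solution $\chi$ is defined on $[0,\tau]$ and that the evaluation map $(x_k,y_k)\mapsto \chi_1(\tau)$ is continuous at $(x_r,y_r)$, with $\chi_1(\tau)\to\phi_1(\tau)$ as $(x_k,y_k)\to(x_r,y_r)$.

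Next, this continuity yields an $r>0$ such that $(x_k,y_k)\in B_r((x_r,y_r))$ implies $|\chi_1(\tau)-\phi_1(\tau)|<\delta$, hence $\chi_1(\tau)>\phi_1(\tau)-\delta=\psi_1(\tau)$. Intersecting $B_r((x_r,y_r))$ with $\Gamma^+_{(0,0)}$ ensures that $\chi$ is a valid nonnegative trajectory converging to the origin, so that the comparison with $\psi$ is the one contemplated by Definitions \ref{Definition_OriginalSolution}--\ref{Definition 1}. Then choosing $t_k=\tau>0$ gives $\psi_1(t_k)<\chi_1(t_k)$, which is precisely tolerance for $\langle(x_k,y_k),(x_p,y_p)\rangle$ in the sense of Definition \ref{Definition 1}.

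I expect the only real subtlety — rather than a genuine obstacle — to be the bookkeeping around the existence of $\chi$ on $[0,\tau]$. This is handled by the lower semicontinuity of the maximal interval of existence built into the continuous-dependence theorem, and it becomes automatic once we restrict to $(x_k,y_k)\in\Gamma^+_{(0,0)}$, where solutions are defined for all $t\ge 0$. No estimate on the perturbed solution for $t\neq\tau$ is required, which parallels the way Proposition \ref{Theorem 0} extends tolerance in time while the present statement extends it in the reference initial condition.
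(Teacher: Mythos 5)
Your proof is correct and follows exactly the paper's intended argument: the paper disposes of Propositions \ref{THM: 1} and \ref{THM: 2} in one line by citing continuity of solutions and continuous dependence on initial conditions, which is precisely the mechanism you spell out. Your write-up simply fills in the $\delta$--$r$ bookkeeping at the single tolerance time $\tau$, so there is nothing methodologically different to compare.
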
\\

\begin{proposition}
Assume $($A1$)$, $($A2$)$, and $($A3$)$. If $($\ref{system}$)$ exhibits tolerance for given $\langle(x_r,y_r), (x_p,y_p)\rangle$, then there exists an open ball, $B_{\tilde r}$, of radius $\tilde r$ around $(x_p, y_p)$ such that if $(\tilde x_k,\tilde y_k)\in B_{\tilde r}((x_p,y_p))\cap\Gamma^{x_r}_{(0,0)}$, then there exists a corresponding time $\tilde t_k>0$ such that tolerance is exhibited for $\langle(x_r,y_r),(\tilde x_k,\tilde y_k)\rangle$.
\label{THM: 2}
\end{proposition}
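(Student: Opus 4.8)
The plan is to reduce the statement to the standard theorem on continuous dependence of solutions upon their initial data, exactly as in the companion result Proposition \ref{THM: 1}, but now perturbing the P initial condition rather than the R one. Since $f$ and $g$ are locally Lipschitz, solutions of (\ref{system}) depend continuously on initial conditions over compact time intervals; this is the only analytic input I expect to need.

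First I would fix the witnessing time for tolerance. By hypothesis (\ref{system}) exhibits tolerance for $\langle(x_r,y_r),(x_p,y_p)\rangle$, so by Definition \ref{Definition 1} there is a $\tau > 0$ with $\psi_1(\tau) < \phi_1(\tau)$. I set $\delta \defeq \phi_1(\tau) - \psi_1(\tau) > 0$; this is a fixed positive gap that I will try to preserve under perturbation of $\psi(0)$.

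Next I would invoke continuous dependence. Because $(x_p,y_p) \in \Gamma^{x_r}_{(0,0)} \subset \Gamma^+_{(0,0)}$, the solution $\psi$ exists on all of $[0,\infty)$, in particular on the compact interval $[0,\tau]$. The continuous-dependence theorem then yields a radius $\tilde r > 0$ such that for every initial point $(\tilde x_k, \tilde y_k) \in B_{\tilde r}((x_p,y_p))$ the corresponding solution $\tilde\psi$ exists on $[0,\tau]$ and satisfies $|\tilde\psi_1(\tau) - \psi_1(\tau)| < \delta$. For such a point I estimate
\[
\tilde\psi_1(\tau) < \psi_1(\tau) + \delta = \phi_1(\tau),
\]
so that, taking $\tilde t_k \defeq \tau$, tolerance is exhibited for $\langle(x_r,y_r),(\tilde x_k,\tilde y_k)\rangle$ by Definition \ref{Definition 1}. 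Intersecting $B_{\tilde r}((x_p,y_p))$ with $\Gamma^{x_r}_{(0,0)}$, as in the statement, guarantees that the admissible perturbed initial conditions still lie in the basin of $(0,0)$ and satisfy $\tilde x_k \geq x_r$, so the setup of (A3) is preserved and the conclusion is meaningful within our framework.

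I do not anticipate a genuine obstacle: the result is a soft, openness-type statement, and the argument is essentially identical to that of Proposition \ref{THM: 1}, with the roles of the two initial conditions exchanged. The only point requiring a little care is ensuring that the common interval of existence contains $\tau$ for all nearby initial data, but this is automatic from the continuous-dependence theorem once $\psi$ is known to exist on $[0,\tau]$, which follows from $(x_p,y_p)$ lying in the basin of attraction. Note that, in contrast to Proposition \ref{Theorem 0}, I do not need to extract a crossing time or control the sign of $f$ along any boundary, since here I keep the single witnessing time $\tau$ fixed throughout.
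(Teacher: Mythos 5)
Your proof is correct and takes essentially the same route as the paper, which disposes of Propositions \ref{THM: 1} and \ref{THM: 2} with the single remark that solutions of (\ref{system}) depend continuously on initial conditions. You have simply made that argument explicit: fixing the witnessing time $\tau$, setting $\delta = \phi_1(\tau)-\psi_1(\tau)>0$, and choosing $\tilde r$ by continuous dependence on $[0,\tau]$ so that the gap survives perturbation of $\psi(0)$ is exactly the intended proof.
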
\\

Propositions \ref{THM: 1} and \ref{THM: 2} are easily proved by noting that solutions of (\ref{system}) are continuous and depend continuously on initial conditions. Each time $t_k$ or $\tilde t_k$ can also be extended to an interval of times for which tolerance occurs, by Proposition \ref{Theorem 0}.

\section{Conditions for the existence of tolerance}
\label{Tol2dNonLin}

In this section, we progressively build up a collection of ideas that are useful
for determining the set of initial conditions for P for which tolerance
can be guaranteed to occur or not to occur.
In particular, in subsection \ref{sub: basic}, we present a basic result on a general situation in which tolerance can be guaranteed to occur.
In subsection \ref{sub: isoclines}, we introduce some concepts that are useful
for refining the results from subsection \ref{sub: basic} and we discuss
their immediate consequences for tolerance.
We harness these ideas in subsection \ref{Subsection: TimeEstimates}, where we set up a general approach that can be used to move beyond the results from
subsections \ref{sub: basic} and \ref{sub: isoclines} in particular
systems, and we illustrate this approach in several examples in subsection \ref{sub: examples}.

\subsection{Basic conditions}
\label{sub: basic}
 In this subsection, we consider specific conditions on the initial values of $P$ and $R$ for
which tolerance can or cannot occur. We first consider conditions in which tolerance can occur when solutions $\phi (t)$ and $\psi (t)$ of system (\ref{system}), as defined in Section \ref{Section: def}, are subsets of the same solution curve.\\

\begin{proposition}
\label{Prop: Psi on Neg phi}Assume $($A1$)$, $($A2$)$, and $($A3$)$. Given $\langle(x_r,y_r), (x_p,y_p)\rangle$, assume $\phi _{1}(t)$
and $\psi _{1}(t)\rightarrow 0$ monotonically as $t\rightarrow\infty$. If there exists $\hat{t}>0$ such that $\phi(-\hat{t})=(x_p,y_p)$, 
$($\ref{system}$)$ does not exhibit tolerance for $\langle(x_r,y_r), (x_p,y_p)\rangle$.\\
\end{proposition}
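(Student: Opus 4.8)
The plan is to exploit the single hypothesis $\phi(-\hat t) = (x_p,y_p) = \psi(0)$, which says that the initial point of $\psi$ lies on the backward orbit of the reference solution $\phi$. Geometrically this forces $\phi$ and $\psi$ to trace out the same orbit, differing only by a time translation, and the whole proposition should follow from this observation plus monotonicity. Concretely, since (\ref{system}) is autonomous and $f,g$ are locally Lipschitz, solutions are unique and invariant under time translation. Thus the curve $t \mapsto \phi(t-\hat t)$ is itself a solution of (\ref{system}), and its value at $t=0$ is $\phi(-\hat t) = (x_p,y_p) = \psi(0)$. By uniqueness I would conclude $\psi(t) = \phi(t-\hat t)$ on the common interval of existence, equivalently $\phi(t) = \psi(t+\hat t)$.

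Passing to first components, this identity reads $\phi_1(t) = \psi_1(t+\hat t)$ for all $t \geq 0$. I would then bring in the monotonicity hypothesis: because $\psi_1(t) \geq 0$ by (A2) and $\psi_1(t) \to 0$ monotonically, $\psi_1$ is non-increasing on $[0,\infty)$. Since $\hat t > 0$ gives $t + \hat t > t$, non-increase yields $\psi_1(t+\hat t) \leq \psi_1(t)$, and combining with the identity above gives $\phi_1(t) = \psi_1(t+\hat t) \leq \psi_1(t)$, i.e. $\psi_1(t) \geq \phi_1(t)$ for every $t \in [0,\infty)$. By Definition \ref{Definition 2}, (\ref{system}) does not exhibit tolerance for $\langle(x_r,y_r),(x_p,y_p)\rangle$, which is exactly the claim.

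The step carrying essentially all of the content is the first one: recognizing that $\phi(-\hat t) = (x_p,y_p)$ forces $\psi$ to be a pure time translate of $\phi$, so that comparing $\psi_1$ against $\phi_1$ collapses to comparing a single monotone scalar function with a shifted copy of itself. Once that reduction is in place the rest is immediate, so there is no serious analytic obstacle. The two points I would treat carefully are (i) invoking uniqueness and translation invariance correctly, which relies on the local Lipschitz assumption on $f,g$, and (ii) reading ``$\to 0$ monotonically'' together with the nonnegativity in (A2) as genuine non-increase, and verifying that the sign of the shift $\hat t > 0$ drives the resulting inequality in the direction that rules tolerance \emph{out} rather than in.
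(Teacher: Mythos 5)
Your proof is correct and follows exactly the paper's intended argument: the paper simply remarks that the result ``follows immediately from the group property of flows,'' and your write-up is precisely that argument made explicit --- uniqueness and time-translation invariance give $\psi(t)=\phi(t-\hat t\,)$, and monotonicity of $\psi_1$ then yields $\psi_1(t)\geq\phi_1(t)$ for all $t\geq 0$, ruling out tolerance by Definition \ref{Definition 2}.
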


This proposition follows immediately from the group property of flows and is the reason why tolerance is ruled out in one dimensional systems. Next, we focus on a situation where the reference trajectory $\phi$ is what we call an \textit{excitable} trajectory as represented, for example, in the left panel of Figure \ref{FIG: statement3.1}. We make this precise in terms of the graph of $\phi$, given by
\vspace{-.05in}
\begin{equation}
graph(\phi )=\left\{ (x,y) = (x_r,y_r)\cdot t : t\geq 0\right\},
\label{EQN: graph_phi}
\vspace{-.05in}
\end{equation}
with the following definition.
\vspace{.1in}
\begin{definition}
Assume that $($A1$)$, $($A2$)$, and $($A3$)$ hold.
Fix a positive integer $n$.
The trajectory $\phi(t)$ is $n$-excitable if there exist
times $t_{e_0}=0, t_{e_1}, \ldots, t_{e_{2n-1}}>0$ such that
\begin{description}
\item{$(a)$} $\phi_1(t_{e_i}) > x_r$ for all $i>0$,
\item{$(b)$} $g(\phi_1(t),\phi_2(t))>0$ for $t \in [0,t_{e_{2n-1}}]$,  and
\item{$(c)$}\vspace{-0.2in}
\[
\hspace{-.2in}\left\{ \begin{array}{l}
f(\phi_1(t),\phi_2(t))>0, \; t \in [t_{e_0},t_{e_1}) \textit{ and } (t_{e_{2i}},t_{e_{2i+1}}), \; i \in \{ 1, 2, \ldots, n-1 \}, \vspace{0.1in} \\
f(\phi_1(t),\phi_2(t))<0, \; t \in (t_{e_{2i+1}},t_{e_{2(i+1)}}), \; i \in \{ 0, 1, \ldots, n-2 \}, \mbox{ or} \; t > t_{e_{2n-1}}.
\end{array} \right.
\]\end{description}
\vspace{-.05in}
The trajectory $\phi(t)$ is excitable if it is $1$-excitable.
\label{def_excite}
\end{definition}\\

Excitable trajectories  are common in various biological
models. In the context of acute inflammation, an excitable trajectory represents the initial
activation of the immune system by a stimulus followed by a relaxation to
a stable baseline state.\\

\begin{remark}
Condition (b) on $g$ in Definition \ref{def_excite} is not necessary for our approach, but this assumption clarifies the presentation to follow.
\end{remark}\\

Below, we define a set $T$ such that tolerance with respect to $(x_r,y_r)$ occurs whenever $(x_p,y_p) \in T$, when $\phi(t)$ is an $n$-excitable trajectory.\\

\begin{definition}
For an $n$-excitable trajectory $\phi$, define $t_r>0$ to be the first positive time where $\phi_1(t_r)=x_r$, which exists since $\phi$ is $n$-excitable and continuous $($and by $(A1)$ and $(A2))$. Note also that $\phi_1(t)>\phi_1(t_r)=x_r$ for all $t\in(0,t_r)$ by definition of an $n$-excitable trajectory.
\end{definition}\\

\begin{definition}
Now, in terms of $t_r$, define $G$ to be the set of points $(x,y) \neq (x_r,y_r)$ on the graph of $\phi$ for $t\in (0,t_r]$:
\begin{equation}
G = \left\{ (x,y)|(x,y)=\phi(t) \text{ for } t \in (0,t_r]\right\}.
\end{equation}
\label{DEF: G}
\end{definition}

\begin{definition}
Assume that $\phi$ is an n-excitable trajectory. Define $L$ to be the line segment $L = \{x: x=x_r, y\in(y_r,\phi_2(t_r)]\}$ and define the region $S$ $($see Figure \ref{FIG: statement3.1}$)$ as the union of $L$ and the interior of the region bounded by $G$ and $L$.\\
\label{DEF: S}
\end{definition}

\begin{definition}
Define $T$ as the union of $G$ and $S$ as defined above,
\begin{equation}
T = G \cup S.
\end{equation}
\label{DEF: Sbar}
\end{definition}

\begin{definition}
Define $M=\max_{t\ge 0} \{\phi_1(t)\}$, which exists by $(A1)$, $(A2)$, and the continuity of $\phi$. Let $t_m>0$ $(t_M>0)$ be the minimal (maximal) positive time such that $\phi_1(t)=M$.\\
\end{definition}

\begin{proposition}
\label{Prop: Psi_inRegionS}
Let $\phi(0)=(x_r,y_r)$ and let $(x_p,y_p)$ be given.  Suppose that $($A1$)$, $($A2$)$, and $($A3$)$ hold and that $\phi$ is an $n$-excitable trajectory. Under these conditions, $T$ is a non-empty set. Moreover, if $(x_p,y_p)\in T$, then $($\ref{system}$)$ will exhibit tolerance for $\langle(x_r,y_r),(x_p,y_p)\rangle$.
\end{proposition}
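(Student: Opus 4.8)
The plan is to split on whether $(x_p,y_p)$ lies on the reference orbit itself or strictly inside the region trapped by it, proving the substantive case by contradiction using uniqueness of solutions together with the defining property $\phi_1(t)>x_r$ on $(0,t_r)$. Non-emptiness of $T$ is immediate: $n$-excitability guarantees $t_r>0$, hence $G=\{\phi(t):t\in(0,t_r]\}\neq\emptyset$ and $T\supseteq G$. If $(x_p,y_p)\in G$, then $(x_p,y_p)=\phi(s)$ for some $s\in(0,t_r]$, and autonomy gives $\psi(t)=\phi(t+s)$, so $\psi_1(t)=\phi_1(t+s)$. Since $f(\phi(t))<0$ for $t>t_{e_{2n-1}}$ by condition $(c)$, the function $\phi_1$ is strictly decreasing there; choosing any $\tau>t_{e_{2n-1}}$ yields $\psi_1(\tau)=\phi_1(\tau+s)<\phi_1(\tau)$, so tolerance holds.

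For the main case $(x_p,y_p)\in S\setminus G$ we have $(x_p,y_p)\notin\mathrm{graph}(\phi)$, so $\psi$ and $\phi$ trace distinct orbits. Write $\bar S=S\cup G$ for the closed region bounded by the arc $G$ and the segment on $x=x_r$, and let $M_1=\max_{t\in[0,t_r]}\phi_1(t)$, attained at some interior $t_{m,1}\in(0,t_r)$ (interior, since $\phi_1=x_r<M_1$ at both endpoints). Suppose, for contradiction, that tolerance fails, i.e. $\psi_1(t)\ge\phi_1(t)$ for all $t\ge 0$. Two structural facts drive the argument: by uniqueness, $\psi$ cannot meet $G\subseteq\mathrm{graph}(\phi)$, so it can leave $\bar S$ only by crossing the line $x=x_r$ at an interior point of the bounding segment (the endpoints $\phi(0),\phi(t_r)$ lie on $\mathrm{graph}(\phi)$ and are likewise unreachable); and the rightmost extent of $\bar S$ equals $M_1$, attained only on $G$. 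Let $\theta\in(0,+\infty]$ be the first exit time of $\psi$ from $\bar S$. At such an exit $\psi_1(\theta)=x_r$, so no-tolerance forces $\phi_1(\theta)\le x_r$, and since $\phi_1>x_r$ on $(0,t_r)$ this gives $\theta\ge t_r>t_{m,1}$. Hence $\psi(t_{m,1})\in\bar S$, so $\psi_1(t_{m,1})\le M_1$, while no-tolerance gives $\psi_1(t_{m,1})\ge\phi_1(t_{m,1})=M_1$. Thus $\psi_1(t_{m,1})=M_1$, so $\psi(t_{m,1})$ is a point of $\bar S$ with first coordinate $M_1$, which by the second fact lies on $G$. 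Then $\psi$ meets $\mathrm{graph}(\phi)$, forcing the two onto a common orbit and $(x_p,y_p)\in G$ --- contradicting $(x_p,y_p)\in S\setminus G$. Therefore tolerance occurs.

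The step I expect to be the main obstacle is the topological trapping: rigorously, that $\bar S$ is a Jordan region whose boundary decomposes into the flow-invariant arc $G$ (impassable by uniqueness) and the segment $L$, and that its right-hand extent is realized only on $G$. Some care is also needed for degenerate configurations --- tangential contact of $\psi$ with $L$ at the exit is harmless, since the argument uses only $\psi_1(\theta)=x_r$; and a start on $L$ with $f(x_r,y_p)\le 0$ in fact produces tolerance at once, as then $\psi_1<x_r<\phi_1$ immediately after $t=0$. Finally, one should note that it is the first-excursion maximum $M_1$, rather than the global maximum $M$, that controls the argument once $n\ge 2$ (the two coincide when $\phi$ is merely excitable).
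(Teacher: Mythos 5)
Your argument is, in substance, the paper's own proof run by contradiction rather than directly: the paper splits into $(x_p,y_p)\in G$ (time-shift along the common orbit, compare at the time of the last maximum of $\phi_1$) and $(x_p,y_p)\in S$ (trap $\psi$ in the region until it crosses $x=x_r$, then compare either at the time of the maximum, if the crossing happens at or after $t_r$, or at the crossing time itself, if before). Your two ``structural facts'' and the dichotomy $\theta\ge t_r$ versus exit before $t_r$ are exactly these same ingredients, so the route is not genuinely different; the paper's direct version is slightly shorter because it never needs to convert the equality $\psi_1(t_{m,1})=M_1$ into a uniqueness violation.

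That conversion is where you have a real, though patchable, gap. From $\psi(t_{m,1})\in G$, i.e. $\psi(t_{m,1})=\phi(s)$ with $s\in(0,t_r]$, uniqueness gives only that $\psi$ and $\phi$ are time-translates, $\psi(t)=\phi(t+s-t_{m,1})$; this places $(x_p,y_p)=\phi(s-t_{m,1})$ on the \emph{full} orbit of $\phi$, not necessarily in $G$. If $s-t_{m,1}<0$, then $(x_p,y_p)$ lies on the backward extension of $\phi$, and nothing in the standing hypotheses (which constrain $\phi$ only for $t\ge0$) prevents that backward orbit from re-entering $\bar S$ through $L$; so ``common orbit $\Rightarrow (x_p,y_p)\in G$'' is not justified as stated. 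The fix is one line, using the very no-tolerance hypothesis you already have in force: writing $\psi(t)=\phi(t-\Delta)$ with $\Delta:=t_{m,1}-s\in(0,t_{m,1})\subset(0,t_r)$, evaluation at $t=\Delta$ gives $x_r=\phi_1(0)=\psi_1(\Delta)\ge\phi_1(\Delta)>x_r$, a contradiction. Two smaller points: your claim that a start on $L$ with $f(x_r,y_p)=0$ yields $\psi_1<x_r$ ``at once'' is unjustified (only $f<0$ gives that; for $f=0$ argue instead that either $\psi$ stays in $\bar S$ for a short time, so $\theta>0$ and the main argument applies, or it leaves, and leaving through $L$ already is tolerance since then $\psi_1<x_r<\phi_1$) --- an omission the paper's proof shares. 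And your closing caveat distinguishing $M_1$ from $M$ is vacuous: condition $(a)$ of $n$-excitability keeps $\phi_1>x_r$ through $t_{e_{2n-1}}$, so $t_r>t_{e_{2n-1}}$ and the global maximum of $\phi_1$ is already attained on $[0,t_r]$, i.e. $M_1=M$ for every $n$, not just $n=1$.
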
\\

\begin{proof}
By the assumptions, a region $T=G\cup S$ as defined above exists. We divide the proof into two parts since $T$ is defined as the union of two sets.

Part 1: Suppose $\psi (0)=(x_p,y_p)\in G$.  This implies that
$\psi (0) = (x_p,y_p) = \phi (\tau )$,
for some $\tau >0$. Again, $\phi_1(t)<M $ for all nonnegative $t>
t_M$. It follows that $\psi_1(t_M) = \phi_1(t_M+\tau) < M =
\phi_1(t_M)$.
Thus, $(\ref{system})$ exhibits tolerance for $\langle (x_r,y_r),(x_p,y_p)\rangle \in
G$ at time $t_M$.

Part 2:   Suppose $(x_p,y_p)\in S$. We first consider the case where $x_p>x_r$ and  define $t_p = \min_{t>0} \{t:\psi_1(t)=x_r\}$ such that $\psi(t)\in S$ for all $t\in [0,t_p]$.  If $t_p\ge t_r$ then since $t_r>t_M\ge t_m$,
$t_m\in (0,t_p)$.  Hence, $\psi_1(t_m)<M=\phi_1(t_m)$ and tolerance is exhibited at $t_m$.
Now, if  $0<t_p<t_r$, then it is possible that $\psi_1(t_m)>M$ (see bottom panel of Figure \ref{FIG: statement3.1}). However, from the definition of $t_r$, $\phi_1(t_p)>\phi_1(t_r) = x_r =\psi_1(t_p)$ and tolerance is exhibited at $t_p$.  Now, consider the special case that $x_p=x_r$.  If $f(x_p,y_p)>0$ then one of the above two cases holds.  If  $f(x_p,y_p)<0$, then there exists $\epsilon>0$ such that $\psi_1(\epsilon)<x_r$ and $\phi_1(\epsilon)>x_r$. Thus, $\phi(\epsilon)>\psi(\epsilon)$ and tolerance occurs at $\epsilon$.
\end{proof}\\

Figure \ref{FIG: statement3.1} illustrates Proposition \ref{Prop: Psi_inRegionS} in both phase space (left panel) and with time courses (right panel). Notice that if we consider the special case when $(x_r,y_r)$ of an $n$-excitable trajectory is on the $x$-axis, then uniqueness of solutions is sufficient to guarantee tolerance.
\begin{figure}[h]
\centering
\includegraphics[width=5in]{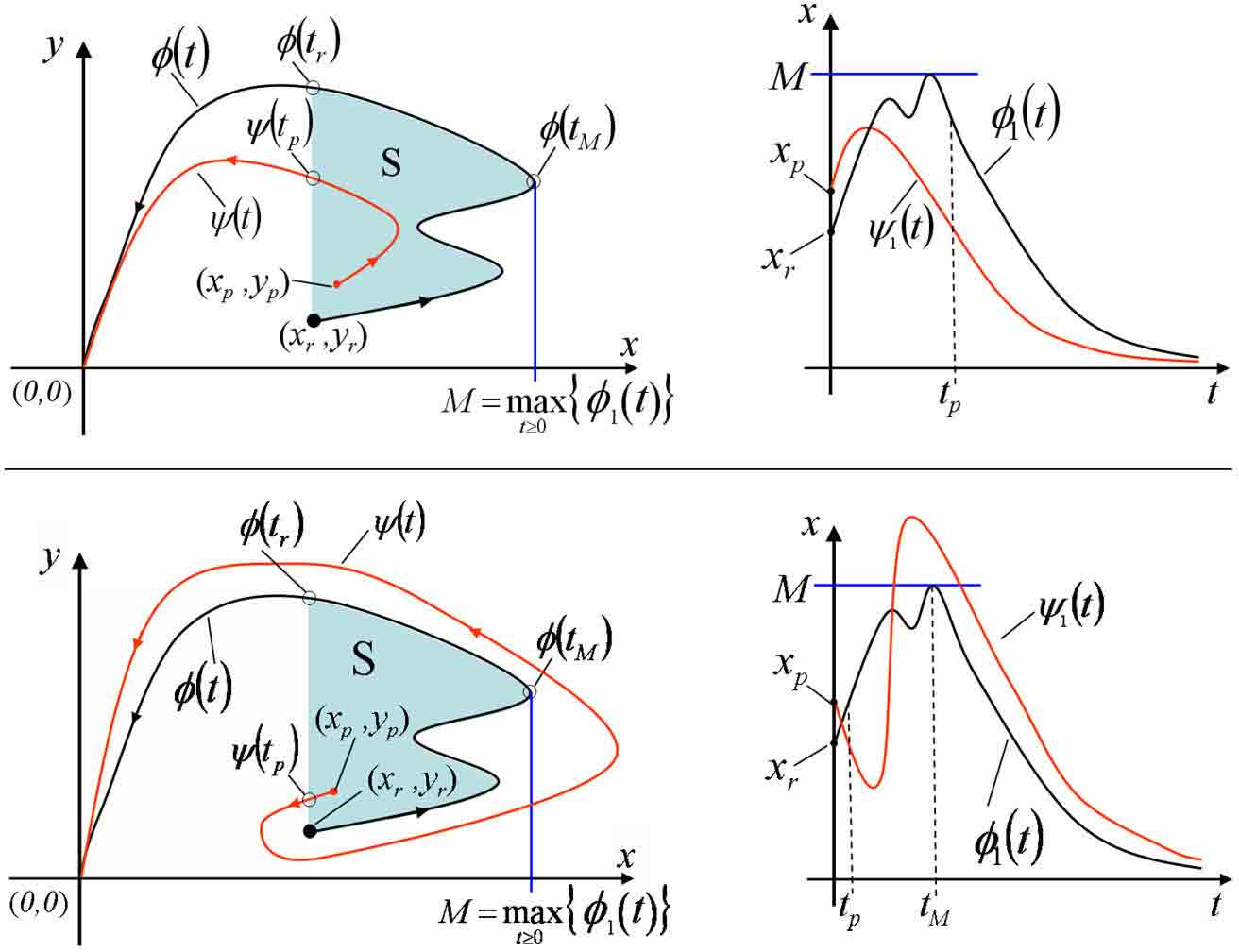}
\caption[Illustration for
Proposition \protect\ref{Prop: Psi_inRegionS}.]{Illustration of Proposition \protect\ref{Prop: Psi_inRegionS} in the case that $\phi$ is $n$-excitable. P trajectories with initial conditions in region $S$ exhibit tolerance. Left Panel: A $2$-excitable R trajectory, $\phi(t)$, initial condition, $(x_r,y_r)$ (black) and two example P trajectories, $\protect\psi(t)$, initial condition $(x_p,y_p)\in S$ (red). The maximum value in the $x$-direction for $\phi(t)$ is marked with a vertical blue line and denoted  by $M$.  Right Panel: Time courses of both $\phi_1(t)$ $($black$)$ and $\psi_1(t)$ $($red$)$.  Time $t_p$ is  where $\psi_1$ first takes on the value $x_r$ and $t_M$ is time when $\phi_1(t)$ last attains its maximal value.}
\label{FIG: statement3.1}
\end{figure}

If more constraints are imposed on the vector field $f$ then the
region that guarantees tolerance can be immediately expanded to include the strip above $T$ in $\Gamma^{x_r}_{(0,0)}$.  To be precise, we introduce the following definition.
\vspace{0.1in}
\begin{definition}
Define $\hat T$ by the set
\begin{equation}
\hat T = \left( (x_r, M) \times (\phi_2(t_M),\infty) \setminus T \right) \cap \Gamma^{x_r}_{(0,0)}.
\end{equation}\label{DEF: Shat}
\end{definition}
\vspace{-.1in}
\begin{proposition}
\label{THM: Inhibition_suf}  Assume $($A1$)$, $($A2$)$, $($A3$)$, and that $\phi$ is an $n$-excitable trajectory with $\phi(0)= (x_r,y_r)$. If  $f\le 0$ in $\hat T$, then for $(x_p,y_p)\in \hat T$, $($\ref{system}$)$ will exhibit tolerance.
\end{proposition}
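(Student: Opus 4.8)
The plan is to exploit the sign condition directly: since $f\le 0$ on $\hat T$, the first coordinate of any orbit in $\hat T$ can only decrease there, so $\psi_1$ can never reach the peak value $M=\max_t\phi_1(t)$ that $\phi_1$ attains. I will then produce the required time of Definition \ref{Definition 1} by the same two-time comparison used in Proposition \ref{Prop: Psi_inRegionS}, namely the last maximizing time $t_M$ of $\phi_1$ and the first time $\psi_1$ falls to $x_r$. Concretely, I first record the geometry from Definition \ref{def_excite}: because $\phi$ is $n$-excitable, $\phi_1(t)>x_r$ exactly on $(0,t_r)$, one has $M>x_r$, and (as already noted before Proposition \ref{Prop: Psi_inRegionS}) $t_m\le t_M<t_r$. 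Using $t_M<t_r$ together with the terminal interval on which $f<0$, one checks that $graph(\phi)$ meets the open set $\hat T$ nowhere: for $t\in(0,t_r]$ the graph lies in $G\subset T$, at $t=0$ it lies on $x=x_r$, and for $t>t_r$ it stays at $x\le x_r$. In particular $(x_p,y_p)\in\hat T$ is off $graph(\phi)$, so $\psi$ and $\phi$ are distinct orbits and, by uniqueness, their graphs are disjoint.

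Next I define $t_p=\min\{t>0:\psi_1(t)=x_r\}$. This exists and is positive, since the rectangle defining $\hat T$ is open in $x$, giving $\psi_1(0)=x_p>x_r$, while $\psi_1(t)\to 0<x_r$ by (A2). The central claim is that $\psi(t)\in\hat T$ for all $t\in[0,t_p)$. On this interval $\psi_1>x_r$, so $\psi$ has not crossed the left edge $x=x_r$; it stays in $\Gamma^{x_r}_{(0,0)}$ by invariance, so it cannot cross the basin-boundary part of $\partial\hat T$; and it cannot cross the lower boundary $G\subset graph(\phi)$ by the disjointness just established. The only remaining exit is across the right edge $x=M$, which is ruled out because, as long as $\psi$ remains in $\hat T$, the hypothesis $f\le 0$ forces $\dot\psi_1\le 0$, hence $\psi_1(t)\le x_p<M$. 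A standard first-exit-time (connectedness) argument then confines $\psi$ to $\hat T$ throughout $[0,t_p)$, and in particular yields $\psi_1(t)\le x_p<M$ on $[0,t_p)$.

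Finally I conclude by comparing $t_p$ with $t_M$. If $t_p>t_M$, then $t_M\in[0,t_p)$, so $\psi_1(t_M)\le x_p<M=\phi_1(t_M)$ and tolerance holds at $\tau=t_M$. If instead $t_p\le t_M$, then since $t_M<t_r$ we get $t_p\in(0,t_r)$, whence $\phi_1(t_p)>x_r=\psi_1(t_p)$ and tolerance holds at $\tau=t_p$. In either case Definition \ref{Definition 1} is met. I expect the confinement step to be the main obstacle: making rigorous that $\psi$ cannot leak out of $\hat T$ before its first coordinate returns to $x_r$ is exactly where uniqueness (to block the lower boundary $G$), basin invariance (to block the top), and the sign condition $f\le 0$ (to block the right edge $x=M$) must be combined, and it is also where the subsidiary fact $graph(\phi)\cap\hat T=\emptyset$ has to be pinned down. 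Everything after the confinement is the short two-case comparison above.
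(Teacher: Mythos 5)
Your strategy is essentially a rigorous elaboration of the paper's own (much terser) argument: the published proof simply asserts that $f\le 0$ in $\hat T$ forces $\psi_1(t)\le x_p<\phi_1(t_M)$ for all $t\ge 0$ and declares tolerance at $t_M$, while you correctly identify that this assertion requires a confinement argument, and you add the early-exit case through $\{x=x_r\}$, handled exactly as in the proof of Proposition~\ref{Prop: Psi_inRegionS}. However, your confinement step has a genuine gap: the enumeration of $\partial\hat T$ is incomplete. By Definition~\ref{DEF: Shat}, $\hat T$ is carved out of the strip $(x_r,M)\times(\phi_2(t_M),\infty)$, so its lower boundary is not, in general, contained in $G$; it also contains horizontal segments of the line $\{y=\phi_2(t_M)\}$. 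This occurs because condition $(b)$ of Definition~\ref{def_excite} constrains $g$ only on $[0,t_{e_{2n-1}}]$, and for a $1$-excitable trajectory $t_{e_{2n-1}}=t_M$: beyond $t_M$ the graph of $\phi$ may fall back below the height $\phi_2(t_M)$ while $\phi_1\in(x_r,M)$, and over every such $x$-interval the exposed piece of the strip's bottom edge lies in $\partial\hat T$ but is not part of $G$, of $\{x=x_r\}\cup\{x=M\}$, or of the basin boundary. Nothing in the hypotheses prevents $\psi$ from crossing such a segment downward ($g$ is unconstrained in $\hat T$), so your central claim that $\psi(t)\in\hat T$ for all $t\in[0,t_p)$ is not merely unproven but false in general; once $\psi$ has left $\hat T$ this way, $f\le 0$ no longer applies and the bound $\psi_1<M$ on $[0,t_p)$, on which both of your cases rest, is no longer justified.

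The gap is patchable, but it needs an extra idea: after a downward exit, $\psi$ lies in the region bounded above by $\{y=\phi_2(t_M)\}$ and below by the portion of $graph(\phi)$ with $t>t_M$, and this region pinches off at $\phi(t_M)=(M,\phi_2(t_M))$; so non-crossing of $G$ still traps $\psi$ in $\{x<M\}$ until it either re-enters $\hat T$ or reaches $\{x=x_r\}$, and iterating over these alternations recovers $\psi_1(t)<M$ on $[0,t_p)$. A second, smaller loophole: you deduce that $\psi$ and $\phi$ are distinct orbits from $(x_p,y_p)\notin graph(\phi)$, but the graph in (\ref{EQN: graph_phi}) is only the forward graph; a priori $(x_p,y_p)$ could lie on the backward extension of $\phi$'s orbit, in which case $\psi$ is a time translate of $\phi$ and its forward graph does meet $G$, so the disjointness you invoke needs a separate argument (or that case must be treated directly). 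In fairness, the paper's two-line proof silently assumes everything you set out to prove, and it does not even register the early-exit case; your attempt breaks down precisely where the published argument is also incomplete.
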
\\

\begin{proof}
For $(x_p,y_p)\in \hat T$ and $f\le 0$, it follows from the assumptions that $\psi_1(t) \le x_p<\phi_1(t_M)$ for $t\ge0$.  Thus, $\phi_1(t_M)> \psi_1(t_M)$. Hence, $(\ref{system})$ exhibits tolerance for $\langle(x_r,y_r),(x_p,y_p)\in \hat T>$ at time $t_M$.
\end{proof}\\

\subsection{Isoclines and Inhibition}
\label{sub: isoclines}
In the previous section we found generic conditions under which tolerance would
occur.  However, the initial conditions resulting in tolerance were confined to a
small region of the available basin of attraction.  Numerical experiments in various examples suggest that the region for tolerance is often larger.  Here, we introduce new concepts that enable us to expand the regions on which we can show that tolerance is possible or guaranteed.

\label{section: Isoclines}
Consider the ODE $($\ref{system}$)$ and assume $($A1$)$, $($A2$)$, and $($A3$)$ hold.\\

\begin{definition}
\ The $x$-\emph{isoclines} of $($\ref{system}$)$ are the family of curves
$($or level sets$)$, parametrized by a parameter $C \in \mathbb{R}$, each defined by $f(x,y)=C$.
\end{definition}\\

A nullcline, for instance, is an isocline for which $C=0$. The vector field points in the positive (negative) $x$-direction when $C$ is positive (negative).\\

\begin{remark}
We may define $y$-isoclines analogously to $x$-isoclines.
Since we do not consider these, we will drop the $x$- and just use \textit{isocline} to refer to the $x$-isoclines here. \\
\end{remark}

We now introduce the concept of \textit{inhibition}. Inhibition is a widely used term, especially in the context of mathematical models of biological systems, for the suppression of one quantity by another. However, the use of this term, while intuitive and heuristically understood, is not always mathematically precise. Hence, we give a precise definition of inhibition.
Subsequently, we prove two results relating to inhibition and tolerance.\\

\begin{definition}
\label{Definition_Inhibition} Given $\Omega \subseteq \mathbb{R}^{2+}$,
$y$ \emph{inhibits} $x$ in $\Omega$, and $\Omega$ is a \emph{region of inhibition} for (\ref{system}), if $f(x,y)$ is a monotone decreasing
function of $y$ in $\Omega$.\\
\end{definition}

\begin{remark}
Note that the sign of $f(x,y)$ is not specified in Definition \ref{Definition_Inhibition}.
Thus, when $y$ inhibits $x$, it may either slow the growth of $x$ or speed up its decay.
\end{remark}\\

A key first observation that follows from the definition of inhibition is that there is always the possibility of tolerance when $y$ inhibits $x$, as long as the perturbed trajectory samples larger $y$ values than the reference trajectory. We now formalize this observation by stating two further definitions and proving two preliminary results, which establish the necessity
of a region of inhibition and of certain relative positions of the perturbed and reference trajectories, respectively, for tolerance to exist. \\

\begin{definition}
The graph of $\psi $ is \emph{bounded below} by the graph of $\phi$ if $\phi_{2}(s_{1})<\psi_{2}(s_{2})$ whenever $\phi_{1}(s_{1})=\psi_{1}(s_{2})$ for any $s_{1},s_{2}>0$, not necessarily equal. For brevity, we say $\psi$ is \emph{bounded below} by $\phi$.\\
\end{definition}

\begin{proposition}
\label{THM: Inhibition_1} Assume that $($A1$)$, $($A2$)$, and $($A3$)$ hold and that
$\psi$ is bounded below by $\phi$. If $($\ref{system}$)$ exhibits tolerance for a given pair $\langle(x_r,y_r),(x_p,y_p)\rangle$, then there exist a region of inhibition $\Omega$ and $s_1,s_2\in \mathbb{R}^{+}$ such that $\psi_1(s_1)=\phi_1(s_2)$ with $\psi(s_1),\phi(s_2)\in \Omega.$
\end{proposition}\\

\begin{proof}
Assume that tolerance exists for $\langle(x_r,y_r),(x_p,y_p)\rangle$ but $y$ does
not inhibit $x$ in any region $\Omega$ that contains points $(\psi_1(s_1),\psi_2(s_1))$ and
$(\phi_1(s_2),\phi_2(s_2))$ where $\psi_1(s_1)=\phi_1(s_2)$ and $s_1,s_2\in \mathbb{R}^{+}$. Given tolerance, it follows from Proposition \ref{Theorem 0} that there exists $t^{\ast }$ such that
$\psi_1(t^{\ast})=\phi_1(t^{\ast})$ and $\psi_1(\hat{t})<\phi_1(\hat{t})$ for all $\hat{t}\in
(t^{\ast},t^{\ast}+\delta)$ for some $\delta >0$. Thus, $f(\psi(t^{\ast}))\leq f(\phi(t^{\ast}))$.  Since the graph of $\psi $ is bounded below by the graph of $\phi $, we have that at $t^{\ast }$, $\psi_2(t^{\ast})>\phi_{2}(t^{\ast })$. Our assumption that $y$ does not
inhibit $x$ in any region $\Omega$ containing the points $\psi(t^{\ast })$ and $\phi (t^{\ast })$ implies $f(\psi (t^{\ast}))>f(\phi (t^{\ast }))$, which is a contradiction. Hence, if $\psi $
is bounded below by $\phi$, and (\ref{system}) exhibits tolerance for $\langle(x_r,y_r),(x_p,y_p)\rangle$, there must exist a region of inhibition $\Omega$ and $s_1,s_2\in \mathbb{R}^{+}$, such that
$\psi_1(s_1)=\phi_1(s_2)$ and $\psi(s_1),\phi(s_2)\in \Omega.$
\end{proof}\\

\begin{remark}
Note that Propositions \ref{THM: Inhibition_suf} and \ref{THM: Inhibition_1} together imply that for an n-excitable trajectory to exist there must exist a region of inhibition.
\end{remark}\\

Proposition \ref{THM: Inhibition_1} states that a region of inhibition is necessary for tolerance to occur when the P trajectory, $\psi (t)$, is bounded \textit{below} by the R trajectory, $\phi(t)$.
However, for $\psi $ bounded \textit{above} by $\phi $, inhibition can be a detriment to the presence of tolerance under certain conditions.  First, we define what it means for $\psi$ to be \textit{bounded above} by $\phi $.\\

\begin{definition}
The graph of $\psi $ is \emph{bounded above} by the graph of $\phi$ if $\phi_2(s_1)>\psi_2(s_2)$ whenever $\phi_1(s_1)=\psi_1(s_2)$ for any $s_1,s_2>0$, not necessarily equal. For brevity, we say $\psi$ is \emph{bounded above} by $\phi$.\\
\end{definition}

\begin{proposition}
\label{THM: Inhibition_2} Assume that $($A1$)$, $($A2$)$, and $($A3$)$ hold. For $\langle(x_r,y_r),(x_p,y_p)\rangle$ such that $x_p >M$, if the graph of $\psi $ is bounded above by the graph of $\phi $, and $y$ inhibits $x$ in a region $\Omega$ such that $\phi (t)$, $\psi (t)\subset \Omega$ for all $t\geq 0$, then $($\ref{system}$)$ cannot exhibit tolerance for $\langle(x_r,y_r),(x_p,y_p)\rangle$.\\
\end{proposition}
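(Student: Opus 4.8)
The plan is to argue by contradiction, in close parallel with the proof of Proposition \ref{THM: Inhibition_1}. Suppose that $(\ref{system})$ does exhibit tolerance for $\langle(x_r,y_r),(x_p,y_p)\rangle$. I would first locate a time at which the two first components coincide and at which the ordering of $f$ is pinned down. Since $M=\max_{t\ge 0}\phi_1(t)\ge \phi_1(0)=x_r$ and $x_p>M$, we have $\psi_1(0)=x_p>x_r=\phi_1(0)$, so the first components are strictly ordered at $t=0$. Invoking Proposition \ref{Theorem 0} at the tolerance time $\tau$ produces the left endpoint $t_1$ of a tolerance interval with $\psi_1(t_1)=\phi_1(t_1)$ and $f(\psi(t_1))\le f(\phi(t_1))$; moreover $t_1>0$, since $t_1=0$ would force $x_p=\psi_1(0)=\phi_1(0)=x_r$, contradicting $x_p>M\ge x_r$.

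Next I would play the bounded-above hypothesis against inhibition at this crossing. Because $\psi_1(t_1)=\phi_1(t_1)$, the definition of $\psi$ being bounded above by $\phi$, applied with $s_1=s_2=t_1>0$, yields $\psi_2(t_1)<\phi_2(t_1)$. The two points $\psi(t_1)$ and $\phi(t_1)$ therefore share the common abscissa $\bar x\defeq \psi_1(t_1)=\phi_1(t_1)$ and differ only in their ordinates, with $\psi$ lying strictly below $\phi$. By hypothesis both points lie in the region of inhibition $\Omega$, and on $\Omega$ the map $f$ is a (strictly) decreasing function of $y$; comparing $f(\bar x,\psi_2(t_1))$ with $f(\bar x,\phi_2(t_1))$ then gives $f(\psi(t_1))>f(\phi(t_1))$. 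This strict inequality contradicts $f(\psi(t_1))\le f(\phi(t_1))$ from the previous step, so tolerance cannot occur and, by Definition \ref{Definition 2}, $\psi_1(t)\ge \phi_1(t)$ for all $t\ge 0$.

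The step I expect to demand the most care is reconciling the strict and non-strict inequalities. Proposition \ref{Theorem 0} only delivers $f(\psi(t_1))\le f(\phi(t_1))$, so the contradiction hinges on inhibition producing a strict reversal; this is exactly why Definition \ref{Definition_Inhibition} must be read with strict monotonicity, precisely as it is used in the proof of Proposition \ref{THM: Inhibition_1}. I would also be careful that the one-variable comparison is legitimate: since $\psi(t_1)$ and $\phi(t_1)$ have equal first coordinate and both lie in $\Omega$, comparing $f$ along the vertical fibre through $\bar x$ is exactly the comparison sanctioned by the definition of a region of inhibition. Finally, I would make explicit the role of the hypothesis $x_p>M$: it guarantees $\psi_1(0)>\phi_1(0)$, hence $t_1>0$, so that the bounded-above condition---stated only for positive times---can legitimately be invoked at the crossing, and it places $\psi(0)$ to the right of the entire range of $\phi_1$, so that any onset of tolerance must occur through such an interior crossing rather than at $t=0$.
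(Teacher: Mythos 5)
Your proof is correct and follows essentially the same route as the paper, which handles this proposition by the same contradiction argument used for Proposition \ref{THM: Inhibition_1}: a crossing time supplied by Proposition \ref{Theorem 0}, the bounded-above hypothesis giving $\psi_2(t_1)<\phi_2(t_1)$, and inhibition reversing the inequality on $f$. You in fact supply details the paper's two-sentence proof leaves implicit, namely that $x_p>M$ forces the crossing to occur at some $t_1>0$ (so the bounded-above definition, stated only for positive times, applies) and that the monotonicity in Definition \ref{Definition_Inhibition} must be read strictly to contradict the non-strict inequality from Proposition \ref{Theorem 0}.
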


\begin{proof}
The proof is analogous to that of Proposition \ref{THM: Inhibition_1}. If $x_p > \max_{t \geq 0} \phi(t)$, then tolerance requires $f(\psi(t^*)) < f(\phi(t^*))$ for some $t^*$ such that $\psi_1(t^*)=\phi_1(t^*)$, but this cannot occur in a region where $y$ inhibits $x$, given that $\psi$ is bounded above by $\phi$.
\end{proof}\\

Thus, Proposition \ref{THM: Inhibition_2} states that in order for tolerance to be a possibility for a P trajectory $\psi$ that is bounded above by the R trajectory $\phi $, for initial condition $\psi_1(0) >M$, it is necessary that there exists at least
one pair, $s_1,s_2\in \mathbb{R}^{+}$, such that $\psi_1(s_1)=\phi_1(s_2)$ and $\psi(s_1),\phi(s_2)$ do not belong to a region of inhibition.

Propositions \ref{Prop: Psi_inRegionS}, \ref{THM: Inhibition_suf}, \ref{THM: Inhibition_1}, and \ref{THM: Inhibition_2} suggest a strategy for evaluating whether or not tolerance may occur in a particular system for given R and P trajectories with initial values $\phi(0)=(x_r, y_r)$ and $\psi(0)=(x_p, y_p)$, under assumptions (A1), (A2), and (A3). First, if $\phi$ is an $n$-excitable trajectory, then by Proposition \ref{Prop: Psi_inRegionS}, tolerance occurs for all $(x_p,y_p)\in T$ (see Definition \ref{DEF: Sbar} and Figure \ref{FIG: statement3.1} ). If in addition $f\le 0$ in $\hat T$, then by Proposition \ref{THM: Inhibition_suf} tolerance occurs for all $(x_p,y_p)\in \hat T$ (see Definition \ref{DEF: Shat}).
Next, we identify the regions of inhibition for system (\ref{system}). If it can be established that the trajectory $\psi$ emanating from an initial condition $(x_p,y_p)$ is bounded below by $\phi$ but does not pass through a region of inhibition, then tolerance cannot occur (see Proposition \ref{THM: Inhibition_1}).  Similarly, if $x_p >M$, $\psi$ is bounded above by $\phi$, and $\psi, \phi$ are contained in a region of inhibition, then tolerance cannot occur (see Proposition \ref{THM: Inhibition_2}). If $f_y < 0$ on all of $\mathbb{R}^{2+}$, then the possibility of tolerance exists for all $(x_p, y_p)$ such that $\psi$ is bounded below by $\phi$.\\

\subsection{Time interval estimates}
\label{Subsection: TimeEstimates}
To obtain more precise conditions for the existence of tolerance, direct estimates regarding specific trajectories
of (\ref{system}) are necessary.  Here, we show how to derive estimates
for upper and lower bounds on the amount of time it takes for the relevant
trajectories to reach a specified $x$-value $x_f$ that is crossed by both
trajectories, $\phi(t)$ and $\psi(t)$.  If an $(x_p,y_p)$ can be found
such that $\psi(t)$ takes a
shorter time interval to reach $x_f$ than $\phi(t)$, then tolerance exists for
that $(x_p,y_p)$.

Assume that there is a positive integer $n$ for which the graph of $\phi$ can be decomposed into a union of $n$ graph segments such that the $y$ component of the graph is single valued with respect to
$x$ on each.  This assumption holds, for example, when $\phi$ is $m$-excitable
for some $m$.
Let $x_i$, $i\in\{1,\dots,n+1\}$ be the $n+1$ terminal points of the $n$ segments, defined by $x_1=x_r$,  $x_i  =  \phi_1(t^i_\phi)$, for $i=2,\dots,n$, where  $t^i_\phi=\inf_{t>t^{i-1}_\phi} \{t : f(\phi_1(t),\phi_2(t))=0\}$ with $t^1_\phi=0$, and $x_{n+1}=x_f$.  Let $t^{n+1}_\phi = \inf_{t>t^n_\phi}\{ t : \phi_1(t)=x_f\}$. The total time to traverse the trajectory from $x_r$ to $x_f$ is then given by $t_\phi=\sum_{i=1}^{n}\Delta t^i_\phi$, where $\Delta t^i_\phi= t^{i+1}_\phi-t^i_\phi$.

On each graph segment we can express the graph of $\phi$ as a function $y=v_i(x)$, where $v_i$ is
defined on the interval $x_i\le x\le x_{i+1}$, $i\in\{1,\dots,n\}$. We can compute $\Delta t_i$ for each segment directly by integrating the first equation of (2.1) along the graph segment defined by $y=v_i(x)$, i.e. $\dot{x} = f(x,v_i(x))$, to obtain
\begin{equation}
t_\phi= \sum_{i=1}^{n} \int_{x_i}^{x_{i+1}} \frac{du}{f(u,v_i(u))}.
\label{tphi1}
\end{equation}
A similar construction can give $t_\psi$, with initial $x$-coordinate
$x_p$.  Tolerance then implies  $t_{\psi }< t_{\phi}$.  In general, it is not possible to obtain $v_i$ in closed form, but depending on the structure of $f$, estimates can be made to obtain various bounds for $t_\phi$ and $t_\psi$.

For example,  with respect to (\ref{system}), consider the family of $x$-isoclines $f(x,y)=C$, where $C\in \mathbb{R}$.  Let $c_{\phi}^i=\sup_{t\in [t^i_\phi,t^{i+1}_\phi)}\{|f(\phi_1(t),\phi _2(t))|\}$, i.e. the largest magnitude isocline through which the trajectory  $\phi$ passes on the segment $[x_\phi^i,x_\phi^{i+1}]$.  Then from (\ref{tphi1}) we obtain $ t_\phi\ge \sum_{i=1}^{n} |x_\phi^{i+1}-x_\phi^{i}|/c_\phi^i$.  Likewise, let $c_{\psi }^i=\inf_{t\in \lbrack t^i_\psi,t^{i+1}_\psi)}\{|f(\psi _{1}(t),\psi _{2}(t))|\}$, i.e. the smallest magnitude isocline through which the trajectory $\psi (t)$ passes on the segment $[x_\psi^i,x_\psi^{i+1}]$, yielding $ t_\psi\le \sum_{i=1}^{n} |x_\psi^{i+1}-x_\psi^{i}|/c_\psi^i$. Thus, if
 \begin{equation}
\sum_{i=1}^{n} \frac{|x_\psi^{i+1}-x_\psi^{i}|}{c_\psi^i}< \sum_{i=1}^{n} \frac{|x_\phi^{i+1}-x_\phi^{i}|}{c_\phi^i},
 \label{timecond}
 \end{equation}
then $t_{\psi }<t_{\phi}$,
which implies tolerance.

We can use condition (\ref{timecond}) to show, for example, that
if $\psi(t)$ is bounded below by an $m$-excitable trajectory $\phi(t)$, and $\phi(t)$ and $\psi(t)$ both lie in a region of inhibition, then the region on which tolerance is guaranteed to occur can be expanded from that defined in Proposition \ref{THM: Inhibition_suf}.
As an example, suppose that $\phi(t)$ is an excitable trajectory.  We can then divide $\phi$ into two segments.  In the first segment $\phi_1(t)$ and $\phi_2(t)$ are increasing, and in the second $\phi_1(t)$ is decreasing. By continuity and (A1),  $\phi_2(t)$ must first increase and then decrease on the second segment.  The end point of the first segment is $x_M=\max_{t>0} \phi_1(t)$.
Define $x_f$ as the $x$-value where $\phi_2(t)$ is maximal and let $\phi_2(t)=y_f$ at this point. Since $\phi(t)$ belongs to a region of inhibition, the largest magnitude isocline through which the first segment of $\phi(t)$ passes is given by $c_\phi^1=f(x_r,y_r)=C_r$.  On the second segment, the largest magnitude isocline passes through $\phi(t)$ when $\phi_2(t)$ is maximal.  Thus $c_\phi^2=|f(x_f, y_f)|=C_f>0$.

\begin{figure}[h]
\begin{center}
\includegraphics[width=3in]{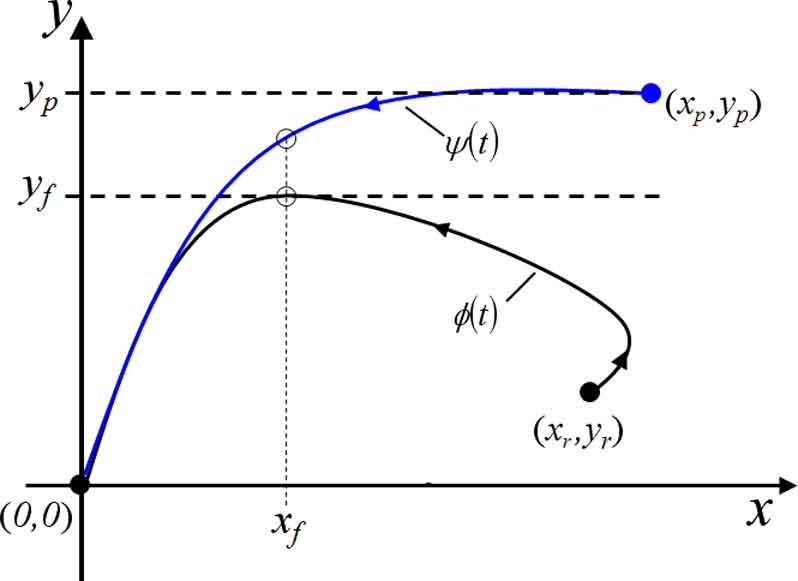}
\caption[Illustration for time interval estimates]{Illustration for time interval estimates.}
\label{FIG: TimeEstimates}
\end{center}
\end{figure}

Now, using Figure \ref{FIG: TimeEstimates} as a reference, consider a trajectory $\psi(t)$ such that $f<0$ along the trajectory, so there is only one segment and it is bounded below by the line $y=y_f$.  Thus, $c_\psi^1=C_\psi > C_f$, and tolerance is observed if
\begin{equation}
\frac{|x_f-x_p|}{C_\psi}<\frac{|x_M-x_r|}{C_r} + \frac{|x_f-x_M|}{C_f}.
\label{timeregion}
\end{equation}
If we consider an excitable trajectory, then $x_p>x_f$, $x_M>x_r$, and $x_M>x_f$.  Taking these inequalities in (\ref{timeregion}) gives the tolerance condition
\begin{equation}
x_p<x_M+\frac{C_\psi-C_f}{C_f}(x_M-x_f)+\frac{C_\psi}{C_r}(x_M-x_r)\defeq \hat{x}_M.
\label{regionexpand}
\end{equation}
Since $C_\psi>C_f$, (\ref{regionexpand}) implies that  $\hat{x}_M>x_M$, which expands the region obtained from Proposition \ref{THM: Inhibition_suf}.  We note that $C_\psi$ is a function of $y_p$, so (\ref{regionexpand}) defines a region $R$ such that if $(x_p,y_p)\in R$, then tolerance occurs in (\ref{system}).\\

\subsection{Examples}
\label{sub: examples}
In the examples below, we illustrate the ideas introduced in the previous subsection. \\

\begin{example}
\label{EX: Isoclines_NL_Example2} Consider the system given by
\begin{equation}
\left.
\begin{array}{ccccc}
\dot{x} & = & f(x,y) & = & \frac{x^{2}}{1+y}-x \\
\dot{y} & = & g(x,y) & = & x^2-\frac{y}{2}
\end{array}%
\right\} \text{.}
\label{System: NLExample2}
\end{equation}
\end{example}\\
Note that $(0,0)$ is a stable node for (\ref{System: NLExample2}).
The isoclines for this system are the family of curves given by the equation
\begin{equation}
y=\frac{x^{2}-x-C}{x+C}  \label{EQN: Iso_NL_ex2}
\end{equation}
for $C\in \mathbb{R}$. Figure \ref{FIG: Isoclines_Ex2} shows a subset of the isoclines for $C\in [-4.0,50]$ shown in increments of $0.5$ for those above the $C=0$ isocline and in increments of $1.0$ for those below.

\begin{figure}[h]
\begin{center}
\includegraphics[width=4in]{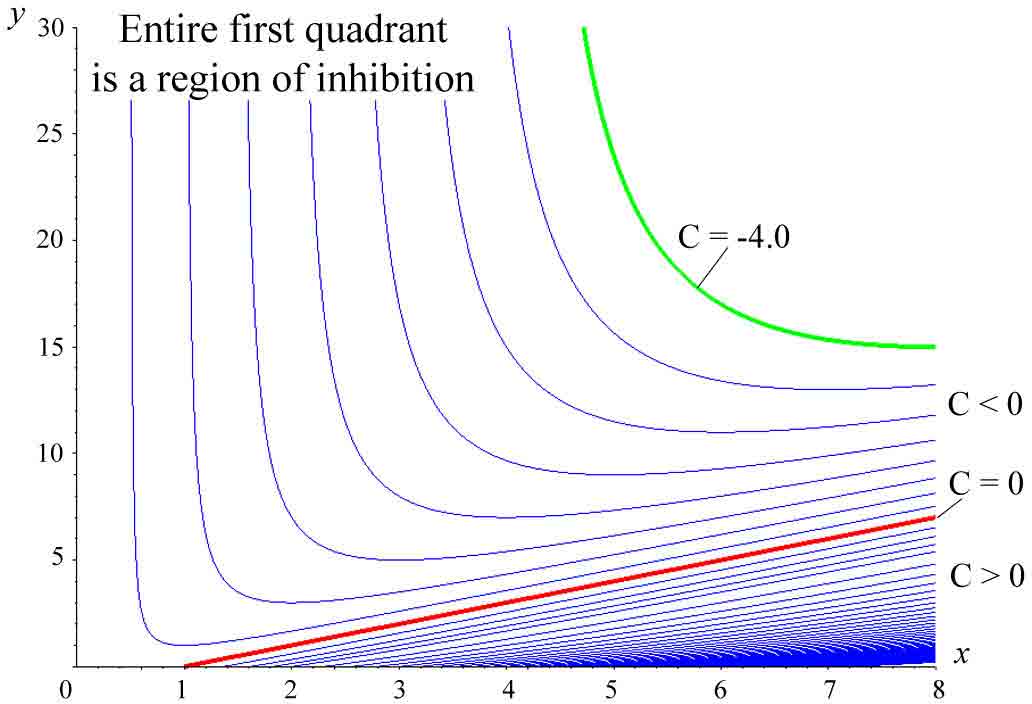}
\caption[Isoclines for Example \ref{EX: Isoclines_NL_Example2}]{Isoclines for Example \ref{EX: Isoclines_NL_Example2} defined by Equations \ref{EQN: Iso_NL_ex2} for $C\in \lbrack -4.0,50]$.}
\label{FIG: Isoclines_Ex2}
\end{center}
\end{figure}

For each $C<0$, the corresponding isocline has a local minimum at $x=-2C$ and a vertical asymptote at $x=-C$. Direct differentiation of $f$ in (\ref{System: NLExample2}) yields $f_y <0$, or equivalently, from (\ref{EQN: Iso_NL_ex2}), $dy/dC<0$, for all $(x,y)$ in the first quadrant. Thus, the entire first quadrant is a region of inhibition. We will consider several different initial conditions $(x_r,y_r)$ for $\phi(t)$ in this example:\\
\renewcommand{\labelenumi}{(\alph{enumi})}
\begin{enumerate}
\item $(x_r,y_r)=(4.0, 0.0)$,
\item $(x_r,y_r)=(4.0, 3.0)$,
\item $(x_r,y_r)=(4.0,10.0)$.\\
\end{enumerate}

\begin{figure}[h]
\begin{center}
\includegraphics[width=2.838in]{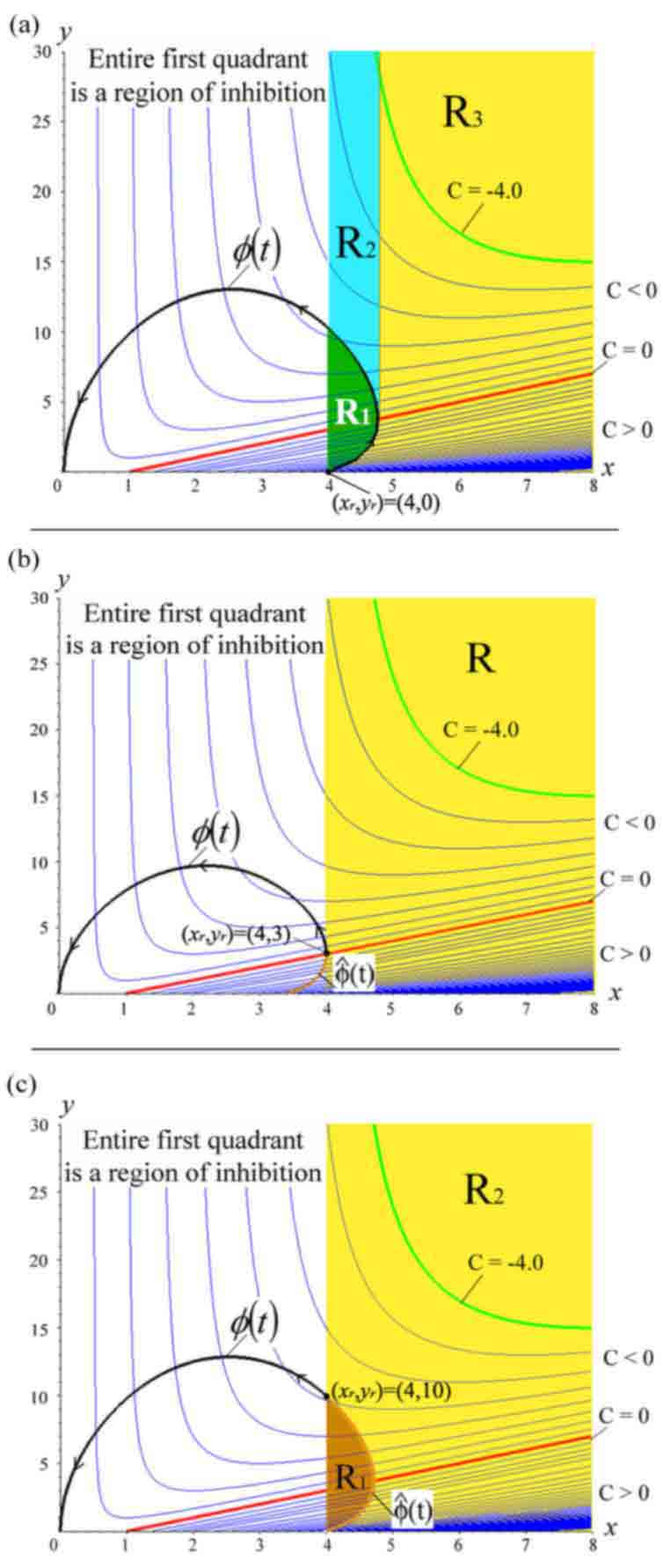}
\caption[Isoclines and $(x_r,y_r)$ points for Example \ref{EX: Isoclines_NL_Example2}. ]{Isoclines and various initial values $(x_r,y_r)$ for Example \ref{EX: Isoclines_NL_Example2} a, b, and c.}
\label{FIG: Isoclines_Regions_Ex2}
\end{center}
\end{figure}

\noindent For initial condition (a), Figure \ref{FIG: Isoclines_Regions_Ex2}(a) displays the following features:
\begin{itemize}
\item $\phi(t)$ is the curve shown in black for initial condition $\phi(0)=(x_r,y_r)=(4.0,0)$.

\item $R_1$ is the green region union the boundary of $\phi(t)$ and is defined in the same manner as the region $T$, in Definition \ref{DEF: Sbar}.
$R_1$ is bounded to the left by $\{ x=x_r \}$, in accordance with $(A3)$, and to the right by $\{ x=x_M \}$.

\item $R_2$ (the light blue region) is the strip in $\Gamma^{x_r}_{(0,0)}$, lying above $R_1$, sharing its bounds on $x$.

\item $R_3$ (the yellow region) is the complement of $R_1\cup R_2$ with respect to $\Gamma^{x_r}_{(0,0)}$, namely $R_3 \defeq \Gamma^{x_r}_{(0,0)}\backslash (R_1 \cup R_2).$\\
\end{itemize}

Case 1(a):  $(x_p, y_p) \in R_1$.  By Proposition \ref{Prop: Psi_inRegionS}, any $(x_p, y_p) \in R_1$ will produce tolerance.  Furthermore, define $G=\left\{ (x,y)|(x,y)=graph(\phi)\cap [x_r,\infty)\times (0,\infty)\right\}$.  By Proposition \ref{THM: 2}, for each $(x_p, y_p)\in G$, there exists an open ball, $B_{\tilde r}$, of radius $\tilde r$ around $(x_p, y_p)$ such that $(\tilde x_k,\tilde y_k) \in B_{\tilde r} \cap \Gamma^{x_r}_{(0,0)}$ produces tolerance with respect to $(4.0,0)$.

\pagebreak
Case 2(a):  $(x_p, y_p) \in R_2$.  Region $R_2$ is a region of inhibition in which $f<0$. Thus, by Proposition \ref{THM: Inhibition_suf}, any $(x_p, y_p) \in R_2$ will produce tolerance.\\

Case 3(a): $(x_p, y_p) \in R_3$.  In this case, for $\psi_1(t)> M \defeq \max_{t\geq 0}\{\phi_1(t)\}$, $\psi(t)$ is bounded below by $\phi(t)$ and the presence of a region of inhibition makes tolerance possible (Proposition \ref{THM: 2}). For $\psi_1(t)< M$, which is possible for small $y_p$, $\psi(t)$ will eventually be bounded below by $\phi(t)$ and hence tolerance is again possible.\\

Figure \ref{FIG: ScreenShotEX317a} contains links to four separate animations that illustrate the presence or absence of tolerance in Example \ref{EX: Isoclines_NL_Example2}(a) using various choices of $\psi(0)$ from the different regions shown in Figure \ref{FIG: Isoclines_Regions_Ex2}(a). Each animation displays both phase space trajectories of $\phi$ and $\psi$ and time courses of $\phi_1(t)$ and $\psi_1(t)$ in a side-by-side comparison.\\

\begin{figure}
  \includegraphics[width=5in]{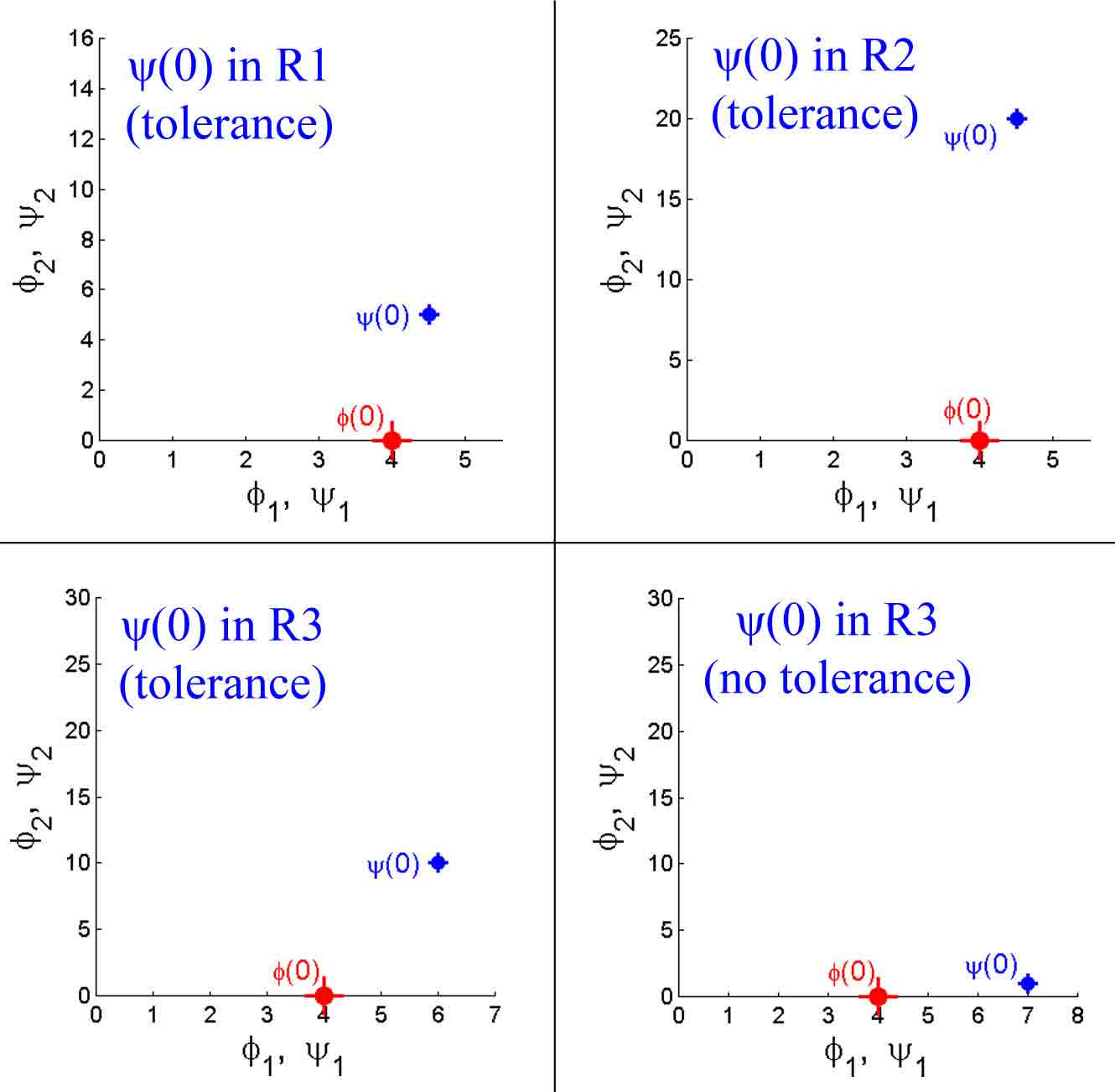}\\
  \caption{Animations for Example \ref{EX: Isoclines_NL_Example2}$(a)$, showing the presence or absence of tolerance with respect to $\phi(0)=(4,0)$ for differing choices of $\psi(0)$. $\phi(0)$ is denoted by the large red dot and $\psi(0)$ is denoted by the smaller blue dot.  Given $\phi(0)=(4.0,0)$, $\psi(0)=(4.5, 5)\in R_1$ produces tolerance $($Top Left$)$, $\psi(0)=(4.5, 20) \in R_2$ produces tolerance $($Top Right$)$, $\psi(0)=(6,10)\in R_3$ produces tolerance $($Bottom Left$)$, and $\psi(0)=(7,1) \in R_3$ does not produce tolerance $($Bottom Right$)$. }\label{FIG: ScreenShotEX317a}
\end{figure}

\newpage
If $y_r$ is increased with $x_r$ fixed, the regions $R_1$ and $R_2$ shrink.
Finally, when $y_r$ reaches 3.0, corresponding to initial condition (b), these regions disappear. Figure \ref{FIG: Isoclines_Regions_Ex2}(b) displays the following features:
\begin{itemize}
\item $\phi(t)$ is the curve shown in black for initial condition $\phi(0)=(x_r,y_r)=(4.0,3.0)$.

\item The orange curve, denoted as $\hat{\phi}$, is the curve of points obtained by integrating $\phi(t)$ backwards in  time from $t=0$ to $t\approx -1.0$, at which time it intersects the $x$-axis at $\hat{x}\approx 3.4$.

\item $R$ is the yellow region defined to be $\Gamma^{x_r}_{(0,0)} \setminus (4.0,3.0)$.

\end{itemize}
For this example, if $x_p=x_r=4.0$, then for all $(x_p,y_p)\in R$, the corresponding graph of $\psi$ is or will eventually be bounded below by the graph of $\phi$.
Since the graph of $\psi$ lies in $\mathbb{R}^{2+}$ and $\mathbb{R}^{2+}$ is a region of inhibition, Proposition \ref{THM: Inhibition_1} implies that it is possible that tolerance can be exhibited by any $(x_p,y_p)\in R$, although, as in the previous case, tolerance is not guaranteed (see Figure \ref{FIG: Isoclines_Regions_Ex2}(b)).\\

For $y_r > 3.0$, the situation is qualitatively similar to that shown in Figure \ref{FIG: Isoclines_Regions_Ex2}(c) for initial condition (c), $(x_r,y_r)=(4.0,10.0)$.  Figure \ref{FIG: Isoclines_Regions_Ex2}(c) displays the following features:
\begin{itemize}
\item $\phi(t)$ is the curve shown in black for initial condition $\phi(0)=(x_r,y_r)=(4.0,10.0)$.

\item The orange curve, denoted as $\hat{\phi}$, is the curve of points obtained by integrating $\phi(t)$ backwards in time from $t=0$ to $t\approx -.58$, at which time it intersects the $x$-axis at $\hat{x}\approx 4.0$.

\item $R_1$ is the orange region union its boundaries: (1) $\hat \phi(t)$ and (2) the line segment $\{(x,y)|x=x_r,y\in[0,10]\}$.

\item $R_2$ is the yellow region defined to be the complement of $R_1$ with respect to $\Gamma^{x_r}_{(0,0)}$, namely $R_2 = \Gamma^{x_r}_{(0,0)} \setminus R_1.$\\
\end{itemize}

Case 1(c):  $(x_p, y_p) \in R_1$.  Using Proposition \ref{THM: Inhibition_2} and Proposition \ref{Prop: Psi on Neg phi}, $(x_p, y_p) \in R_1$ cannot produce tolerance with respect to $(x_r,y_r)=(4.0,10.0)$.\\

Case 2(c):  $(x_p, y_p) \in R_2$.  For all $(x_p,y_p)\in R_2$, the corresponding graph of $\psi$ is or will eventually be bounded below by the graph of $\phi$. Again, tolerance is possible but not guaranteed.\\

In summary of initial condition (c), given that the entire first quadrant is a region of inhibition, there is the possibility of tolerance for all $(x_r,y_r)$ and $(x_p, y_p)$ except when $x_r \leq x_p < \max_{t \geq 0} \hat \phi_1(t)$ and $\psi$ is bounded above by $\phi$, as illustrated in the orange region $R_1$ in Figure \ref{FIG: Isoclines_Regions_Ex2}(c). Figure \ref{FIG: ScreenShotEX317c} links to three animations for Example \ref{EX: Isoclines_NL_Example2}(c) with $\psi(0)$ chosen from the different regions shown in Figure \ref{FIG: Isoclines_Regions_Ex2}(c). As before, each animation shows phase space and time courses in a side-by-side comparison.\\

\begin{figure}
  \includegraphics[width=5in]{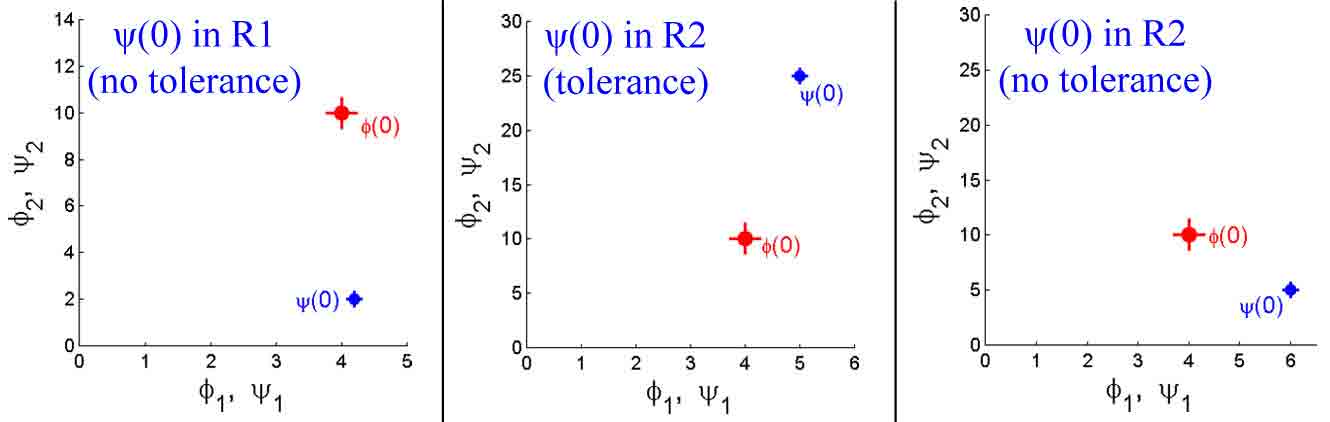}\\
  \caption{Animations for Example \ref{EX: Isoclines_NL_Example2}$(c)$, showing the presence or absence of tolerance with respect to $\phi(0)=(4,10)$ for differing choices of $\psi(0)$. $\phi(0)$ is denoted by the large red dot and $\psi(0)$ is denoted by the smaller blue dot.  Given $\phi(0)=(4,10)$, $\psi(0)=(4.2, 2)\in R_1$ does not produce tolerance $($Left$)$, $\psi(0)=(5, 25) \in R_2$ produces tolerance $($Middle$)$, and $\psi(0)=(6,5)\in R_2$ does not produce tolerance $($Right$)$.}\label{FIG: ScreenShotEX317c}
\end{figure}

We now use time interval estimates to expand the region that guarantees tolerance.  Consider initial value (a).
We choose $(x_f,y_f)$ such that $y_f = \max_{t \geq 0}\phi_2(t)$.  We note that the extremal points of  $\phi(t)$, $(x_M,y_M)$ and  $(x_f,y_f)$,  are on the $x$-nullcline and $y$-nullcline respectively so that $y_M=x_M-1$ and $y_f=2x_f^2$.  Given that initial value (a) results in an excitable trajectory,
we can apply (\ref{regionexpand}) with $C_r=12$ and $C_\psi=C_f=|x_f^2/(1+2 x_f^2)-x_f|$.  This then establishes a bound on $\hat{x}_M$, such that tolerance
occurs for $x_r < x_p < \hat{x}_M$, in terms of the initial value and extremal points  of the reference trajectory $\phi(t)$.   For example, rough bounds on $x_f$ and $x_M$ can be obtained from a visual inspection of $\phi(t)$.  From Fig \ref{FIG: Isoclines_Regions_Ex2},  we can propose $2<x_f<3$, leading to $1.55<C_f<2.53$, and $4.5<x_M<5$, with $\hat{x}_M = x_M + (C_f/C_r)(x_M-x_r)$ from (\ref{regionexpand}) with $C_\psi=C_f$. More stringent bounds can be obtained by performing numerical integration using interval arithmetic.  Moreover, as $y_p$ increases, $C_\psi$ increases while $C_f$ remains fixed, such that tolerance can be guaranteed for larger $x_p$, given larger $y_p$.

In fact, example \ref{EX: Isoclines_NL_Example2} is simple enough that we can obtain more precise estimates on $t_\phi$ and $t_\psi$, as defined in Section \ref{Subsection: TimeEstimates}. Let $t_{\phi}$ (similarly, $t_{\psi}$) be the time of passage from $\phi_1=x_r$ ($\psi_1=x_p$) to $\phi_1=x_f$ ($\psi_1 = x_f$). $\phi(t)$ can be represented by two segments. Denote the graph of $\phi$ for $t \in [0,t_{\phi}]$ by $(u,v_i(u))$, $i=1,2$ on the two segments.    $t_\phi$  is given by (\ref{tphi1}), with $x_\phi^1=x_r$, $x_\phi^2=x_M$ and $x_\phi^3=x_f$, where $x_M=\max_{t>0}\phi_1(t)$.  Recall that in this example, the entire first quadrant is a region of inhibition.  Our approach is to estimate the time intervals by setting $v_i(u)$ to a constant in (\ref{tphi1}) and then integrating to obtain $t_\phi>\Delta(y_r,x_r,x_M)+\Delta(y_f,x_M,x_f)$, where
\begin{equation}
\Delta(w,a,b)=\int_{a}^{b}\frac{du}{u^2/(1+w)-u}= \log\frac{|1+w-b|}{|1+w-a|} +
\log\frac{a}{b}.
\label{tw}
\end{equation}

Next, we compute $t_\psi$ for the trajectory $\psi(t)$ with initial condition
$(x_p,y_p)$ and ending at $(x_f,y_f)$.
Now, consider those $(x_p,y_p)$ such that $x_p > x_r$ and
$y_p > y_f$.
Since the $y$-nullcline is the curve $y=2x^2$, by uniqueness of solutions to (\ref{System: NLExample2}), the latter condition ensures
that $\psi_2(t) > y_f$ for all $t$ such that $ \psi_1(t)>x_f$.
By the continuity of $\Delta(w,x_f,x_p)$ in $w$,
$t_\psi=\Delta(y_\psi,x_p,x_f)$ for some $y_\psi>y_f$.  Thus, for the tolerance condition $t_\psi < t_\phi$ to hold, it is sufficient that
\begin{equation}
\Delta(y_r,x_r,x_M)+\Delta(y_f,x_M,x_f)>\Delta(y_\psi,x_p,x_f).
\label{delcond}
\end{equation}
If $x_p = x_M$, then the observation that $\Delta(y_f,x_M,x_f) > \Delta(y_\psi,x_M,x_f)$ implies that (\ref{delcond}) holds, and hence tolerance
occurs, as expected from Proposition \ref{THM: Inhibition_suf}.
For $x_p > x_M$, writing
 $\Delta(y_\psi,x_p,x_f)=\Delta(y_\psi,x_p,x_M)+\Delta(y_\psi,x_M,x_f)$ shows
immediately that the upper bound for tolerance can be extended from
$x_M$ to some $x_p > x_M$.

Assuming that both sides are positive, as in Figure \ref{FIG: Isoclines_Regions_Ex2}, condition (\ref{delcond}) can be expressed as
\begin{equation}
\frac{x_r(1-x_f+y_f)(x_M-1-y_r)}{(1-x_M+y_f)(x_r-1-y_r)}>\frac{(1+y_\psi-x_f)x_p}{1+y_\psi-x_p}.
\label{delcond2}
\end{equation}
Condition (\ref{delcond2}) still depends on $y_\psi$, which
can be estimated under the assumption that $y_\psi\ge \psi_2(t_\psi)$ (which holds, for example, if $g<0$ along $\psi(t)$ from $t=0$ to $t=t_\psi$).  Formally integrating the second equation of  (\ref{System: NLExample2}) gives $\psi_2(t_\psi)=y_pe^{-t_\psi/2} +\int_0^{t_\psi} e^{-(t_\psi-t')/2} x^2 dt'$.
On the trajectory $\psi(t)$, $x_f\le x\le x_p$, hence $\psi_2(t_\psi)>y_pe^{-t_\psi/2} +\int_0^{t_\psi} e^{-(t_\psi-t')/2} x_f^2 dt'=y_f +(y_p-y_f)e^{-t_\psi/2}$, where we have used $y_f=2x_f^2$.  Now $t_\psi=\Delta(y_\psi,x_p,x_f)<\Delta(y_f,x_p,x_f)$.  Therefore, $y_\psi>\psi_2(t_\psi)>y_b$, where
\begin{equation}
y_b=y_f+(y_p-y_f)\exp[-\Delta(y_f,x_p,x_f)/2],
\label{yb}
\end{equation}
and $y_b$ is an affine function of $y_p$.  Note that the right hand side of  (\ref{delcond2}) is a monotonic decreasing function of $y_\psi$.
Hence, (\ref{delcond2}) is guaranteed to hold if
\begin{equation}
\frac{x_r(1-x_f+y_f)(x_M-1-y_r)}{(1-x_M+y_f)(x_r-1-y_r)}>\frac{(1+y_b-x_f)x_p}{1+y_b-x_p},
\label{delcond3}
\end{equation}
which is a condition on tolerance for the initial value $(x_p,y_p)$ of $\psi(t)$ in terms of the initial value and extremal points of the reference trajectory $\phi(t)$.
Finally, we note that condition (\ref{delcond3}) is also applicable  for initial condition (b) or (c).  In those cases, set $x_M=x_r$.\\

\begin{remark}
If $y_p$ is increased for fixed $x_p$, then $y_\psi$ increases, such that the right hand side of (\ref{delcond2}) decreases.  Thus, the larger $y_p$ is, the more likely it is that (\ref{delcond2}) is satisfied.
\end{remark}\\

\begin{example}
Let $\dot y=rx-y$, $r>0$ and consider the following general equations as possibilities for $\dot x = f(x,y)$:
\begin{eqnarray}
\dot x = f(x,y) &=& \frac{ax^n}{1+by} - cx\\
\dot x = f(x,y) &=& ax-by^n\\
\dot x = f(x,y) &=& \frac{ax^n}{1+by^m} - cx,
\end{eqnarray}
where $a,b,c>0$, $n,m\in \mathbb{Z}^+$ and $x,y \geq0$.
\end{example}\\

Each of the above equations models inhibition of $x$ by $y$, with $f_y<0$ in the first quadrant, implying that the entire first quadrant is a region of inhibition.  Assuming parameters are chosen so that $(0,0)$ is a stable fixed point, results will be completely analogous to those in Example \ref{EX: Isoclines_NL_Example2}.\\

More diverse possibilities arise when $f_y \ge 0$ on at least a subset
of the first quadrant.
For example, suppose that $f(x,y)$ is the product of two inhibitory terms, such as
\[
f(x,y) = (ax+by)( \frac{cx}{1+dy}+h),
\]
with $b<0$ and $a,c,d>0$.
Indeed,
\[
sgn(f_y) = sgn(cx(b-adx)+bh(1+dy)^2).
\]
If $h>0$, then $f_y<0$ for all $(x,y) \in \mathbb{R}^{2+}$, as in the previous example.
If, however, $h<0$, then $f_y$ changes signs in $\mathbb{R}^{2+}$.\\

\begin{example}
\label{EX: Isoclines_NL_Example1}Consider the nonlinear system \begin{equation} \left. \begin{array}{ccccc}
\dot{x} & = & f(x,y) & = & (.5x-y)\left( \frac{0.1x}{1+y}-1\right) \\
\dot{y} & = & g(x,y) & = & 0.4x-y\text{,}
\end{array}
\right\}.
\label{System: NLExample1}
\end{equation}
\end{example} \\
with $(0,0)$ as a stable node.
The isoclines for this system are the family of curves given by the equations:
\begin{eqnarray}
y^{(1)} &=&\frac{3}{10}x-\frac{1}{2}+\frac{1}{2}C+\frac{1}{10}\sqrt{
4x^{2}+20x+30xC+25+50C+25C^{2}}\text{,}
\label{EQN: Iso_NL_ex1_Top curves} \\
y^{(2)} &=&\frac{3}{10}x-\frac{1}{2}+\frac{1}{2}C-\frac{1}{10}\sqrt{
4x^{2}+20x+30xC+25+50C+25C^{2}}\text{,}
\label{EQN: Iso_NL_ex1_Bottom curves}
\end{eqnarray}
where $C\in R$. In Figure \ref{FIG: Isoclines_Ex1}, the isoclines are drawn for various values of $C\in \lbrack -2.5,5.0]$, in increments of $0.25$.
For each $C\in \mathbb{R}$, the two curves defined by equations (\ref{EQN: Iso_NL_ex1_Top curves}) and (\ref{EQN: Iso_NL_ex1_Bottom curves}) together form a continuous curve. A thick black curve in the figure emphasizes the two parts, with equation (\ref{EQN: Iso_NL_ex1_Top curves}) forming the curves above and equation (\ref{EQN: Iso_NL_ex1_Bottom curves}) forming those beneath.  The equation of this curve, which looks linear in the first quadrant, is given by $y= -1+.2236 \sqrt{x(x+2)}$.

The portion of the first quadrant containing the top portions of the isoclines is not a region of inhibition, since for fixed $x$, $f$ is an increasing function of $y$ there.
However, the portion of the first quadrant containing the bottom portions of the isoclines is a region of inhibition, since $f$ is a decreasing function of $y$ there.
The curves given by the portion of the $x$-nullcline $(C=0)$ in the first quadrant are marked (red) to help delineate where the speed of the isoclines (i.e. $\dot x$) is positive or negative.

\begin{figure}[h]
\begin{center}
\includegraphics[width=4in]{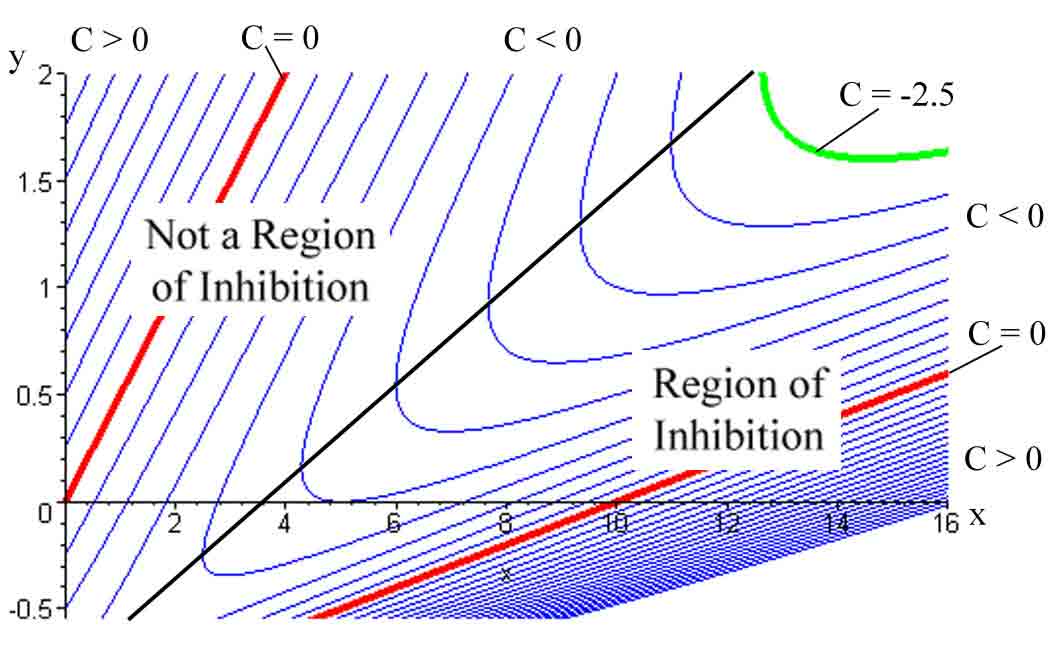}
\caption[Isoclines for Example \ref{EX: Isoclines_NL_Example1}.]{Isoclines for Example \ref{EX: Isoclines_NL_Example1}, drawn for various values of $C_1\in (-2.5,5)$, in increments of $0.25$. The thick black line marks the boundary of the region of inhibition.}
\label{FIG: Isoclines_Ex1}
\end{center}
\end{figure}

\begin{figure}[h]
\begin{center}
\includegraphics[width=4in]{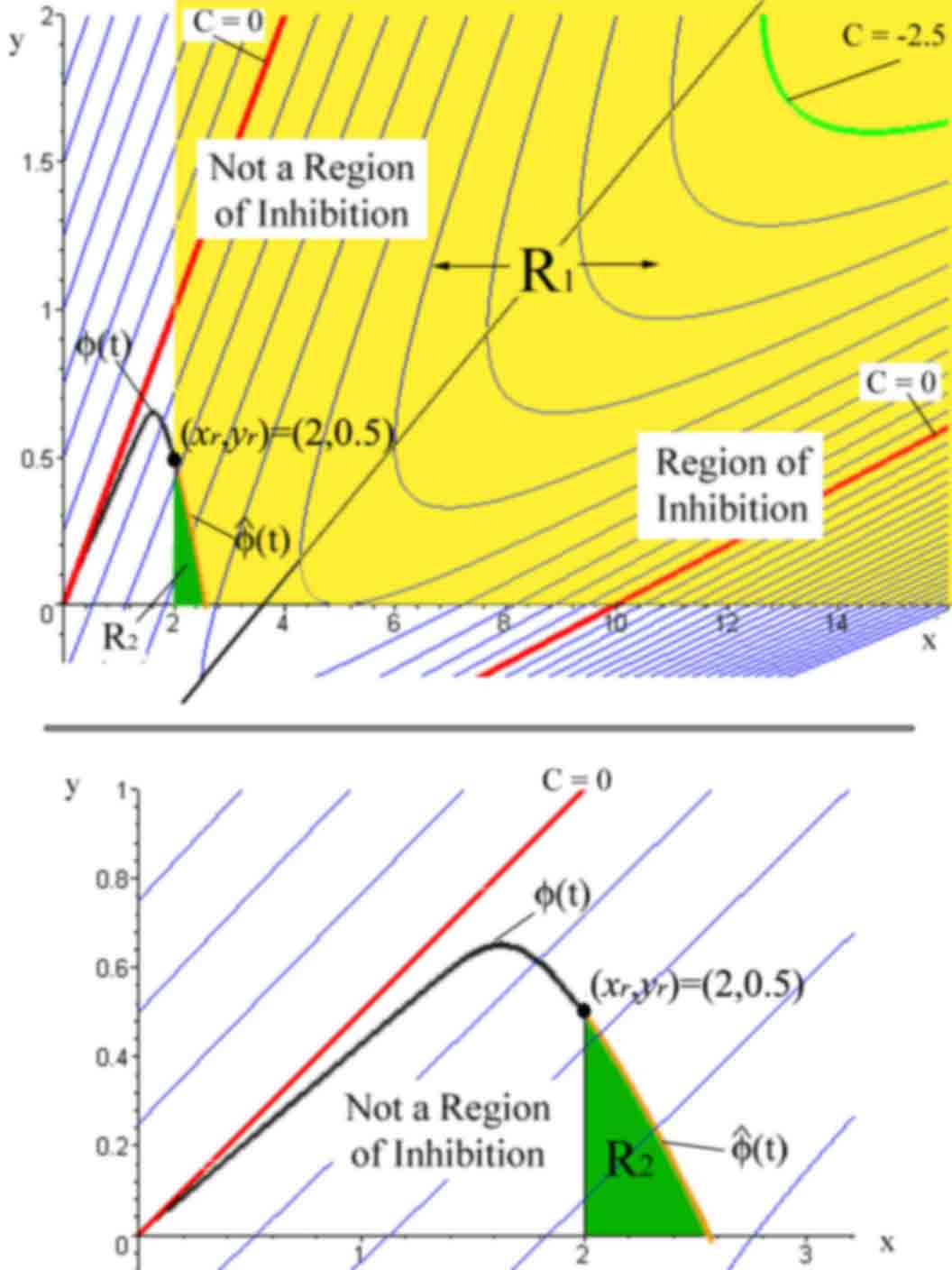}
\caption[Isoclines and important features in Example \ref{EX: Isoclines_NL_Example1}.]{Top: For Example \ref{EX: Isoclines_NL_Example1}, possible $(x_p,y_p)$ points fall in one of two regions: $R_2$, the green area plus its boundaries and $R_1$, the complement of $R_2$ with respect to $\Gamma^{x_r}_{(0,0)}$. Bottom: A close up of region $R_2$.}
  \label{FIG: Isoclines_Regions_Ex1}
\end{center}
\end{figure}

Figure \ref{FIG: Isoclines_Regions_Ex1} shows a specific solution, $\phi (t)$, that will be considered for this example. The following features appear in Figure \ref{FIG: Isoclines_Regions_Ex1}:

\begin{itemize}
\item $\phi (t)$ is the curve shown in black for initial condition $\phi(0)=(x_r,y_r)=(2,0.5)$.

\item The orange curve, denoted as $\hat{\phi}$, is the curve of points obtained by integrating $\phi (t)$\ backwards in time from $t=0$ to $t\approx -0.85$, at which time it intersects the $x$-axis at $\hat{x}\approx 2.5$.

\item Let $R_2$ be the region shown in green together with the boundaries made by (1) the line segment $\{(x,y)|x=2,0\leq y<0.5\}$, (2) the orange curve, $\hat{\phi}$, and (3) the $x$-axis.

\item Define the region $R_1$ to be the complement of $R_2$ in $\Gamma^{x_r}_{(0,0)}$, namely
$R_1 \defeq \Gamma^{x_r}_{(0,0)}\backslash R_2.$\\

\end{itemize}

Recall that every point $(x_p,y_p)$ will lie on or to the right of the line $x=x_r$, by $(A3)$. The regions $R_1$ and $R_2$ are formed so that for $(x_p,y_p) \in R_1$, $\psi$ will be bounded below by $\phi$ and for $(x_p,y_p)\in R_2\setminus \hat\phi$, $\psi$ will be bounded above by $\phi$. The graph of $\hat{\phi}$, in orange, creates a natural boundary (by uniqueness of solutions) between different classes of solutions $\psi(t)$.\\

Case 1: Let $(x_p,y_p)\in R_1$. Then, $\psi$ will be bounded below by $\phi$. Note that the graph of $\phi$ never enters the region of inhibition. Thus, any $(x_p,y_p) \in R_1$ does not produce tolerance with respect to $(x_r,y_r)$ by Proposition \ref{THM: Inhibition_1}.\\

Case 2: Let $(x_p,y_p)\in R_2\setminus \hat \phi$. The resulting $\psi$
will be bounded above by $\phi$. Thus, from Proposition \ref{THM: Inhibition_2}, since there are no regions of inhibition that contain both $\psi(t)$ and $\phi(t)$ for all $t\geq0$, tolerance may occur for $(x_p,y_p)$.
However, if $(x_p,y_p)$ lies \textit{on} the orange curve $\hat{\phi}$, then $\psi (t)$\ and $\phi (t)$\ are subsets of the same larger solution curve of the vector field (\ref{System: NLExample1}) and both $\phi_1(t)$ and $\psi_1(t)\rightarrow 0$ monotonically as $t\rightarrow \infty $. By Proposition \ref{Prop: Psi on Neg phi}, therefore, $(x_p,y_p)$ will not produce tolerance. In addition, by continuity, there exists an open ball, $B$, around each $(x_p,y_p)\in \hat \phi$, such that $(\tilde x_b,\tilde y_b)$ will not produce tolerance for all $(\tilde x_b,\tilde y_b)\in B$. Thus, the set of points which might produce tolerance is  a strict subset of region $R_2$. This set can be characterized more
extensively by two different arguments.

\begin{figure}
  \includegraphics[width=5in]{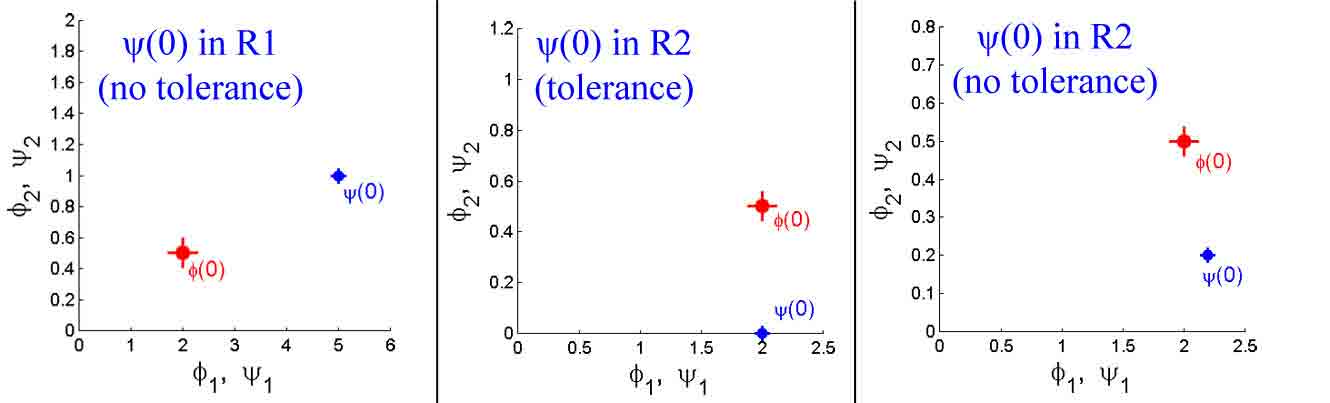}\\
  \caption{Animations for Example \ref{EX: Isoclines_NL_Example1}, showing the presence or absence of tolerance with respect to $\phi(0)=(2,0.5)$ for differing choices of $\psi(0)$. $\phi(0)$ is denoted by the large red dot and $\psi(0)$ is denoted by the smaller blue dot.  Given $\phi(0)=(2,0.5)$, $\psi(0)=(5,1)\in R_1$ does not produce tolerance $($Left$)$, $\psi(0)=(2,0) \in R_2$ produces tolerance $($Middle$)$, and $\psi(0)=(2.2,0.2)\in R_2$ does not produce tolerance $($Right$)$.}\label{FIG: ScreenShotEX319}
\end{figure}

First, it is clear that tolerance occurs if $(x_p,y_p)=(2.0,0)$, since
$f(2,0)<f(2,0.5)<0$ (see the animation associated with the middle panel of Figure \ref{FIG: ScreenShotEX319}).
Thus, tolerance occurs for all $(x_p,y_p)$ in a ball around $(2,0)$, intersected with $\Gamma^{x_r=2}_{(0,0)}$. 
The speed $f(x_p,0)$ becomes monotonically less negative as $x_p$ increases toward 2.5, and tolerance does not occur for $(x_p,y_p) = (2.5,0)$ by Proposition \ref{Prop: Psi on Neg phi}.
Thus, tolerance occurs for $(x_p,0)$ for all $x_p \in [2,\bar{x}_p)$ for some
$\bar{x}_p \in (2,2.5)$.
Similarly, $f(2,y_p)$ becomes monotonically less negative as $y_p$ increases
from 0, where tolerance occurs, to 0.5, where it does not.
Hence, tolerance occurs for $(2,y_p)$ for all $y_p \in [0,\bar{y}_p)$ for some
$\bar{y}_p \in (0,0.5)$.
Therefore, there is a continuous curve connecting $(\bar{x}_p,0)$ to
$(2,\bar{y}_p)$, call it $C_T$, such that tolerance occurs exactly when
$(x_p,y_p)$ is in the interior of the region bounded by $\{ x=2 \}$,
$\{ y=0 \}$, and $C_T$.

Second, to definitively establish that tolerance occurs for some specific $(x_p,y_p) \in R_2$, time interval estimates for specific trajectories must be made, as done in Example \ref{EX: Isoclines_NL_Example2}.  Figure \ref{FIG: ScreenShotEX319} provides links to three animations for Example \ref{EX: Isoclines_NL_Example1} using various choices of $\psi(0)$ from the different regions shown in Figure \ref{FIG: Isoclines_Regions_Ex1}.\\

\begin{example}
\label{EX: Isoclines_NL_Example3}Consider the nonlinear system:
\begin{equation}
\left.
\begin{array}{ccccl}
\dot{x} & = & f(x,y) & = & x\left(\frac{1+y^2}{1-y+y^2}-1.9 \right)\\
\dot{y} & = & g(x,y) & = & x-y
\end{array}
\right\}
\label{System: NLExample3}
\end{equation}
\end{example}\\
The isoclines for this system are the family of curves given by the
equations
\begin{eqnarray}
y^{(1)} &=&\frac{.5\left(19x+10C+\sqrt{37x^2-340xC-300C^2}\right)}{9x+10C}\text{,}
\label{EQN: Iso_NL_ex3_Top curves} \\
y^{(2)} &=&\frac{.5\left(19x+10C-\sqrt{37x^2-340xC-300C^2}\right)}{9x+10C}\text{,}
\label{EQN: Iso_NL_ex3_Bottom curves}
\end{eqnarray}
where $C\in R$. In Figure \ref{FIG: Isoclines_Ex3}, the isoclines are drawn in increments of $0.1$ for values of $C\in \lbrack -1.2,0]$ and in increments of $0.01$ for $C\in \lbrack 0,1]$. For $C\in \lbrack 0,1]$, the two curves defined by equations (\ref{EQN: Iso_NL_ex3_Top curves}) and (\ref{EQN: Iso_NL_ex3_Bottom curves}) together form a continuous curve. The black line, $y=1$, in the figure emphasizes the two parts, with equation (\ref{EQN: Iso_NL_ex3_Top curves}) forming the curves above and equation (\ref{EQN: Iso_NL_ex3_Bottom curves}) forming those beneath.

\begin{figure}[h]
\begin{center}
\includegraphics[width=4in]{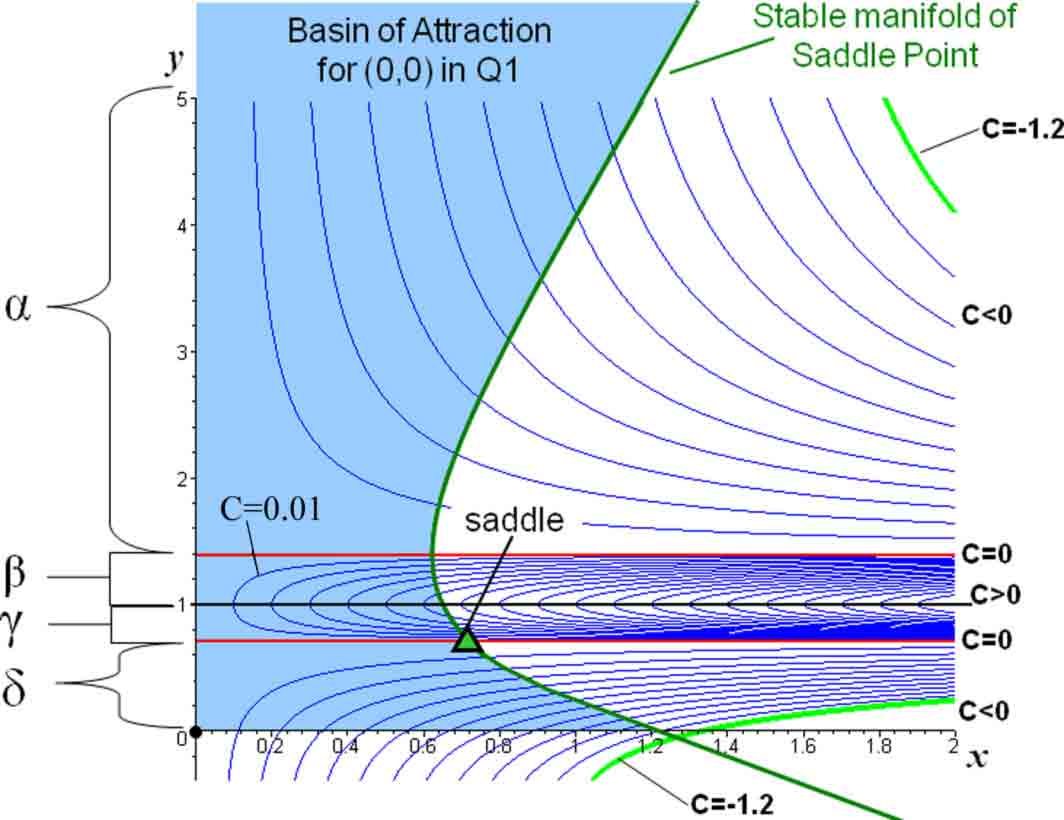}
\caption[Isoclines for Example \ref{EX: Isoclines_NL_Example3}.]{Isoclines for Example \ref{EX: Isoclines_NL_Example3}, drawn for various values of $C \in (-1.2,1)$.}
\label{FIG: Isoclines_Ex3}
\end{center}
\end{figure}

A saddle exists at $(0.72,0.72)$. The stable manifold of this saddle point forms a boundary for the basin of attraction of $(0,0)$, $\Gamma_{(0,0)}$.  The blue shaded region in Figure \ref{FIG: Isoclines_Ex3} shows the subset of $\Gamma_{(0,0)}$ in the first quadrant. A third fixed point (stable spiral, not labeled) in the first quadrant is located at $(1.4, 1.4)$, outside of $\Gamma_{(0,0)}$.  The $x$-nullclines $(C=0)$ are marked (red) to help delineate where the speeds associated with the isoclines (i.e. $\dot x$) are positive or negative.

We define several disjoint subregions (see Figure \ref{FIG: Isoclines_Ex3}) of the basin of attraction of $(0,0)$ in the first quadrant, as follows:
\begin{itemize}
\item $\alpha$ - above (and including) the top component of the $C=0$ isocline,
\item $\beta$ - below the top component of the $C=0$ isocline and above (and including) the line $y=1$,
\item $\gamma$ - below the line $y=1$ and above (and including) the bottom component of the $C=0$ isocline, and
\item $\delta$ - below the bottom component of the $C=0$ isocline.\\
\end{itemize}

These subregions are relevant because $C$ varies nonmonotonically in $y$ for this example and are defined to assist with identifying regions of inhibition. If looked at separately, subregions $\alpha$ and $\beta$ are both regions of inhibition and subregions $\gamma$ and $\delta$ are not regions of inhibition. However, additional complications may arise if $\phi$ and $\psi$ are not in the same subregion on some time interval.

\begin{figure}[h]
\begin{center}
\includegraphics[width=4in]{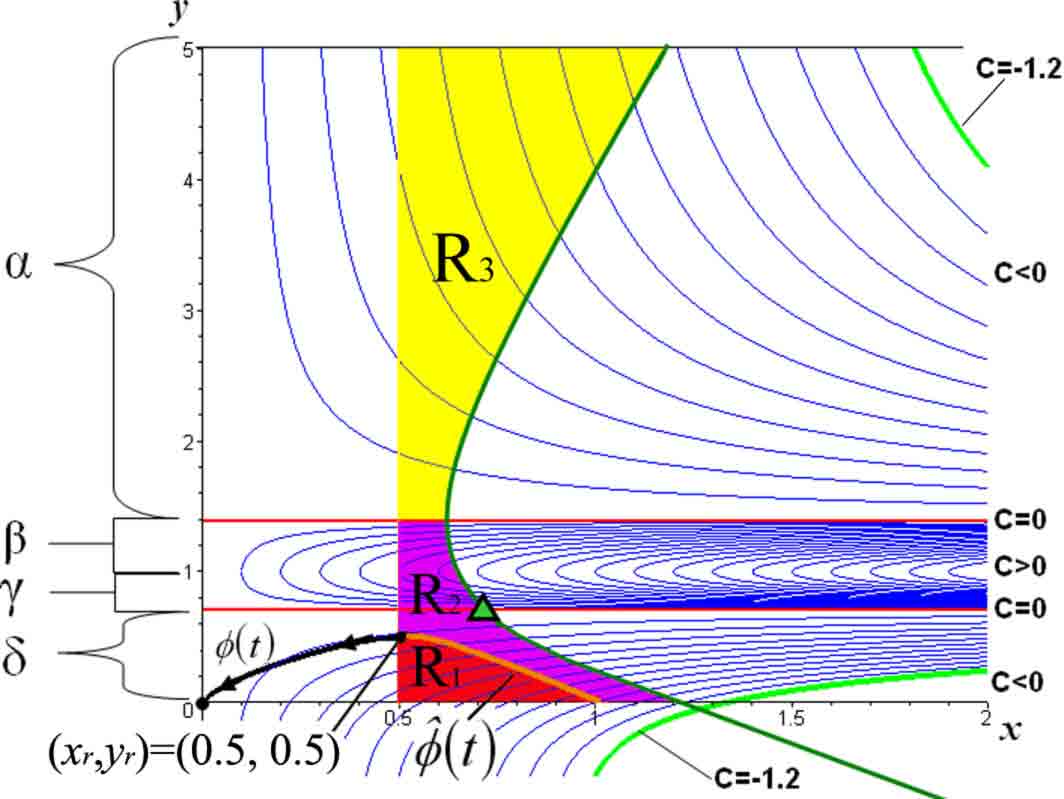}
\caption[Isoclines and important features in Example \ref{EX: Isoclines_NL_Example3}.]{Top: For Example \ref{EX: Isoclines_NL_Example3}, possible $(x_p,y_p)$ points fall in one of three regions: $R_1$, the red area plus its boundaries, $R_2$, the magenta region, and $R_3$, the yellow region.}
  \label{FIG: Isoclines_Regions_Ex3}
\end{center}
\end{figure}

Figure \ref{FIG: Isoclines_Regions_Ex3} shows one specific solution, $\phi (t)$ with $\phi(0)=(0.5,0.5)$, that will be considered for this example. The following features are also a part of Figure \ref{FIG: Isoclines_Regions_Ex3}:

\begin{itemize}
\item The orange curve, denoted as $\hat{\phi}$, is the curve of points obtained by integrating $\phi (t)$\  backwards in time from $t=0$ to $t\approx -1.75$, at which time it intersects the $x$-axis at $\hat{x}\approx 1.0$.

\item $R_1$ is the region shown in red together with the boundaries made by (1) the line segment $\{(x,y)|x=0.5,0\leq y\leq0.5\}$, (2) the orange curve $\hat{\phi}$, and (3) the $x$-axis.

\item $R_2$ is the region shown in magenta, defined as $R_2 \defeq \Gamma^{x_r}_{(0,0)}\setminus (\alpha\cup R_1)$.

\item $R_3$ is the region shown in yellow to be $R_3 \defeq \Gamma^{x_r}_{(0,0)} \cap \alpha$.\\
\end{itemize}

As usual, we consider points $(x_p,y_p)$ that lie on or to the right of the line $\{ x=x_r \}$.
 The region $R_1$ is formed so that for $(x_p,y_p) \in R_1\setminus \hat\phi$, $\psi$ will be bounded above by $\phi$. For $(x_p,y_p)\in R_2 \cup R_3$, $\psi$ will be bounded below by $\phi$. The graph of $\hat{\phi}$, in orange, creates a natural boundary (by uniqueness of solutions) for $\psi(t)$, as in the previous example.\\

Case 1: Let $(x_p,y_p)\in R_1\setminus \hat \phi$. Then, $\psi$ will be bounded above by $\phi$. Thus, from Proposition \ref{THM: Inhibition_2}, since there are no regions of inhibition that contain both $\psi(t)$ and $\phi(t)$ for all $t\geq0$, $(x_p,y_p)$ might produce tolerance. This case is very similar to that
considered in the previous example.
Indeed, it is clear that tolerance occurs if $(x_p,y_p)=(0.5,0)$, while
tolerance does not occur if $(x_p,y_p)$ lies on $\hat{\phi}$, by Proposition \ref{Prop: Psi on Neg phi}. Again, there will be a continuous curve connecting $\{ (x,y): x=0.5, 0 \leq y \leq 0.5 \}$ to $\{ (x,y) : 0.5 \leq x \leq 1 , y=0 \}$
such that tolerance occurs for all $(x,p)$ in $R_1$ below this curve and does not occur in $R_1$ above this curve.
Time interval estimates are necessary to prove that tolerance occurs or does not occur for specific choices of $(x_p,y_p)$ in $R_1$.\\

Case 2: Let $(x_p,y_p)\in R_2$. Then, $\phi \subset \delta$ and $\psi$ will be bounded below by $\phi$.
Note that $\gamma$ is not a region of inhibition and that $\beta$, although a region of inhibition by itself, has $f>0$, such that no tolerance can occur before $\psi$ enters $\delta$.
 But $\delta$ is not a region of inhibition, and hence from Proposition \ref{THM: Inhibition_1}, any $(x_p,y_p) \in R_2$ does not produce tolerance with respect to $(x_r,y_r)$.\\

Case 3:  Let $(x_p,y_p) \in R_3$.  Since $C<0$ in $R_3$, it is possible in this case that tolerance will occur before $\psi$ leaves $\alpha$.
Alternatively, suppose that this does not happen.
After $\psi$ leaves $\alpha$, it enters $\beta, \gamma$, and finally $\delta$ as it converges toward $(0,0)$.
In theory, tolerance could occur after $\psi$ enters $\delta$.
However, $\psi$ is bounded below by $\phi$ and $\delta$ is not a region of inhibition.
Hence, as in Case 2, Proposition \ref{THM: Inhibition_1} implies that tolerance will not occur.
In summary, if $\phi(0) \in \delta$ and $(x_p,y_p) \in R_3$, then
either tolerance occurs before $\psi$ leaves $\alpha$ or it does not occur at all.

\begin{figure}[h]
\begin{center}
\includegraphics[width=4in]{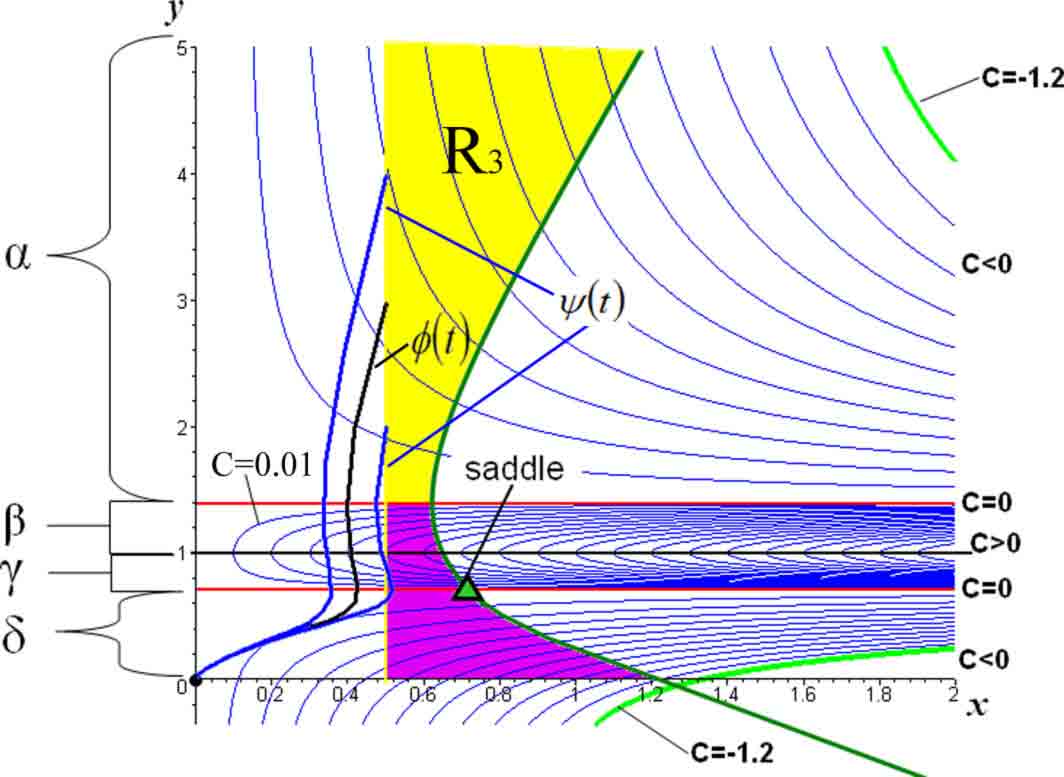}
\caption[]{Nonmonotonic convergence to $(0,0)$ in Example \ref{EX: Isoclines_NL_Example3}.}
  \label{FIG: ISO_NewICs}
\end{center}
\end{figure}

Using the same nonlinear system given by (\ref{System: NLExample3}), consider an alternative choice for $(x_r, y_r)$, namely one in $\alpha$. Such a choice demonstrates some additional complexities that can arise in this type of example.  Now, $\phi$ passes through regions where $f<0$, then $f>0$, and finally $f<0$ again as it converges to $(0,0)$.  For different $\psi$ trajectories, either bounded above or below by $\phi$ (see Figure \ref{FIG: ISO_NewICs}), there are different time intervals when tolerance cannot occur or might possibly occur, which can be inferred from the isoclines.

In the particular example shown, for the $\psi$ that is bounded below by $\phi$, tolerance cannot be ruled out in any region. In particular, let $x_M$ denote the $x$-value where $\phi$ intersects the
$x$-nullcline branch that forms the boundary between $\gamma$ and $\delta$.
If $\psi_1(t) < x_M$ when $\phi$ passes from $\alpha$ to $\beta$, then tolerance is guaranteed to occur.
On the other hand, for the $\psi$ that is bounded above by $\phi$, tolerance is only possible after $\psi$ enters $\delta$.  Figure \ref{FIG: ScreenShotEX320} provides links for two animations for Example \ref{EX: Isoclines_NL_Example3} using $\phi(0)=(0.5, 2)$ in Region $R_3$ and two choices of $\psi(0)$ also in Region $R_3$, similar to those shown in Figure \ref{FIG: ISO_NewICs}.\\

\begin{figure}
  \includegraphics[width=5in]{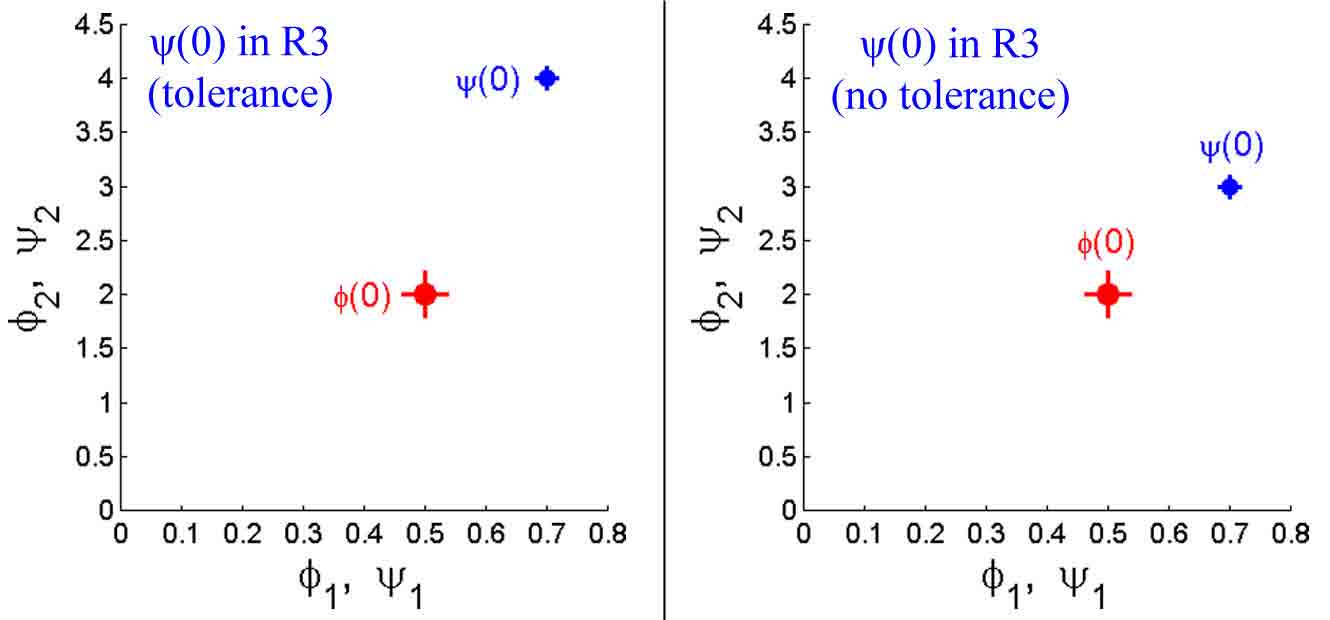}\\
  \caption{Animations for Example \ref{EX: Isoclines_NL_Example3}, using an alternative choice for $\phi(0)$, namely in region $\alpha$, as shown in Figure \ref{FIG: ISO_NewICs}. The presence or absence of tolerance with respect to $\phi(0)=(0.5, 2)$ is shown for differing choices of $\psi(0)$ in Region $R_3$. $\phi(0)$ is denoted by the large red dot and $\psi(0)$ is denoted by the smaller blue dot.  Given $\phi(0)=(0.5, 2)$, $\psi(0)=(0.7,4)\in R_3$ produces tolerance $($Left$)$ and $\psi(0)=(0.7,3) \in R_3$ does not produces tolerance $($Right$)$.}\label{FIG: ScreenShotEX320}
\end{figure}

\section{Tolerance in Linear ODE systems}

\label{Section: 2DLinear}
The previous sections have established that it is sometimes  difficult to make precise general statements about tolerance.  However, in the case of linear systems, we can fully characterize the occurrence of tolerance for equation (\ref{system}). In this section, we derive a complete set of necessary and sufficient conditions for the existence of tolerance in 2D linear ODE systems.

Consider the linear system
\begin{equation}
\dot{x}=Ax\text{,}  \label{system_linear}
\end{equation}
where $A\in M^{2x2}$, $x\in \mathbb{R}^{2+}=[0,\infty )\times \lbrack
0,\infty )$. \ Throughout this section, we will assume as before that:

\begin{description}
\item [(A1)] $(0,0)$ is a stable fixed point of $($\ref{system_linear}$)$,
the eigenvalues of which are real and negative.

\item [(A2)] $\phi (t)$ and $\psi (t)$ are nonnegative for all $t\geq 0$ and both $(x_r,y_r)$ and $(x_p,y_p)$ lie in the basin of attraction for $(0,0)$ in the first quadrant, $\Gamma^+_{(0,0)}$.

\item[(A3)] $x_p \geq x_r$.
\end{description}

Let $\lambda_1$ and $\lambda_2$ be the real, negative eigenvalues of $A$. To arrive at necessary and sufficient conditions for the existence of tolerance, there are two cases that must be considered.
The first case is that $A$ has distinct eigenvalues, $\lambda_1\neq \lambda_2$.  The other case is that $A$ has identical eigenvalues, $\lambda _1=\lambda _2 \defeq \lambda <0$. For each of these cases, there are subcases to consider as well.

\subsection{Case 1: $\lambda _{1}\neq \lambda _{2}$}

For this case, where $\lambda _{1}$ and $\lambda _{2}$ are distinct, negative eigenvalues of $A$, assume without loss of generality that $\lambda_{2}<\lambda _{1}<0$. Let $v$ be an eigenvector corresponding to $\lambda _{1}$, and let
$w$ be an eigenvector corresponding to $\lambda _{2}$. Since $\lambda _{1}$ and $\lambda_{2}$ are distinct, $v$ and $w$ are linearly independent.
Thus, any initial condition can be uniquely written as a linear combination of $v$ and $w$. In particular, $(x_r,y_r)=c_{1}v+c_{2}w=(c_{1}v_{1}+c_{2}w_{1},c_{1}v_{2}+c_{2}w_{2})$, with $c_{1},c_{2}\in \mathbb{R}$. Then, the solution $\phi (t)$ to the initial value problem (IVP) $\dot{x}=Ax$, $\phi (0)=(x_r,y_r)$ is
\begin{equation}
\phi (t)=c_{1}ve^{\lambda _{1}t}+c_{2}we^{\lambda
_{2}t}=(c_{1}v_{1}e^{\lambda _{1}t}+c_{2}w_{1}e^{\lambda
_{2}t},c_{1}v_{2}e^{\lambda _{1}t}+c_{2}w_{2}e^{\lambda _{2}t}).
\label{Case1: Phi(t)}
\end{equation}
Similarly, consider the initial condition $(x_p,y_p)$,
 which can be uniquely written as $(x_p,y_p)=d_1v+d_2w=(d_1v_1+d_2w_1,d_1v_2+d_2w_2)$, with $d_1,d_2\in \mathbb{R}$. The solution $\psi (t)$ to the IVP $\dot{x}=Ax$, $\psi(0)=(x_p,y_p)$ is
\begin{equation}
\psi (t)=d_{1}ve^{\lambda _{1}t}+d_{2}we^{\lambda
_{2}t}=(d_{1}v_{1}e^{\lambda _{1}t}+d_{2}w_{1}e^{\lambda
_{2}t},d_{1}v_{2}e^{\lambda _{1}t}+d_{2}w_{2}e^{\lambda _{2}t}).
\label{Case1: Psi(t)}
\end{equation}
Since we know $x_p\geq x_r$ by (A3), we have that
\begin{equation}
d_{1}v_{1}+d_{2}w_{1}\geq c_{1}v_{1}+c_{2}w_{1}\text{.}\
\label{Case1: ICCondition}
\end{equation}
We will consider three subcases for Case 1: (a) $v_{1}=0$ and $w_{1}=1$ (b) $v_{1}=1$ and $w_{1}=0$ and (c) $v_{1}=$ $w_{1}=1$.

\subsubsection{Case 1a: $v_{1}=0$ and $w_{1}=1$}

For this case, (\ref{Case1: ICCondition}) becomes
\begin{equation}
d_{2}\geq c_{2}.  \label{Case1a: ICCondition}
\end{equation}%
Consider the difference between $\phi _{1}(t)$ and $\psi _{1}(t)$. \ Using
equations (\ref{Case1: Phi(t)})\ and (\ref{Case1: Psi(t)}) as well as
 $v_{1}=0$ and $w_{1}=1$, we have%
\begin{equation*}
\phi _{1}(t)-\psi _{1}(t) = (c_{2}-d_{2})e^{\lambda _{2}t}\text{.}
\end{equation*}%
By (\ref{Case1a: ICCondition}), we have that $(c_{2}-d_{2})\leq 0$. \ Thus,
because $e^{\lambda _{2}t}>0$ for all $t\geq 0$, we have that $\phi
_{1}(t)-\psi _{1}(t)\leq 0$ for all $t\geq 0$. \ Therefore, the following
result has been shown.\\

\begin{proposition}
Assume $($A1$)$, $($A2$)$, $($A3$)$ and that $\lambda _2<\lambda _1<0$. Given $\langle(x_r,y_r),(x_p,y_p)\rangle$, if $v_1=0$ and $w_1=1$ for eigenvectors $v$ and $w$ of $\lambda_1$ and $\lambda_2$, respectively, then $\dot{x}=Ax$ does not exhibit tolerance for $\langle(x_r,y_r),(x_p,y_p)\rangle$.
\end{proposition}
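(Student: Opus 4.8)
The plan is to exploit the explicit eigenmode decomposition already set up in equations (\ref{Case1: Phi(t)}) and (\ref{Case1: Psi(t)}), together with the normalization $v_1=0$, $w_1=1$, in order to reduce the first components of both solutions to a single exponential mode. First I would substitute $v_1=0$ and $w_1=1$ into the first components of (\ref{Case1: Phi(t)}) and (\ref{Case1: Psi(t)}), obtaining $\phi_1(t)=c_2 e^{\lambda_2 t}$ and $\psi_1(t)=d_2 e^{\lambda_2 t}$. The key structural observation is that the eigenvector $v$ associated with the slower eigenvalue $\lambda_1$ drops out of the $x$-component entirely, so both first components are scalar multiples of the \emph{same} exponential $e^{\lambda_2 t}$.

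Next I would record that the initial-condition inequality (A3), which in coefficient form is (\ref{Case1: ICCondition}), collapses under this normalization to $d_2\ge c_2$, as in (\ref{Case1a: ICCondition}); this is the only step where the particular eigenvector normalization is used, since here $x_r=c_2$ and $x_p=d_2$. Forming the difference then gives $\phi_1(t)-\psi_1(t)=(c_2-d_2)e^{\lambda_2 t}$, and because $c_2-d_2\le 0$ while $e^{\lambda_2 t}>0$ for every $t$, I conclude that $\phi_1(t)-\psi_1(t)\le 0$, i.e. $\psi_1(t)\ge\phi_1(t)$, for all $t\ge 0$. By Definition \ref{Definition 2} this is precisely the statement that $\dot{x}=Ax$ does not exhibit tolerance for $\langle(x_r,y_r),(x_p,y_p)\rangle$.

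There is essentially no obstacle here, and that absence is itself the point worth flagging. Tolerance requires the slow and fast modes to compete in the observed component, so that the difference of first components can change sign; in Case 1a the normalization kills the slow mode's contribution to $x$ outright, leaving a difference that is a fixed multiple of a single positive exponential and hence of constant sign for all $t\ge 0$. The only care needed is to check that (A3) genuinely reduces to $d_2\ge c_2$ rather than to some mixed inequality in $c_1,c_2,d_1,d_2$, and to track the sign of $e^{\lambda_2 t}$ correctly — both immediate. The resulting conclusion is the strongest possible: $\psi_1$ can never overtake $\phi_1$ at any time, which is exactly the hypothesis of Definition \ref{Definition 2}.
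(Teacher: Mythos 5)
Your proof is correct and follows essentially the same route as the paper: substitute $v_1=0$, $w_1=1$ into the eigenmode expansions to get $\phi_1(t)-\psi_1(t)=(c_2-d_2)e^{\lambda_2 t}$, note that (A3) reduces to $d_2\ge c_2$, and conclude the difference is nonpositive for all $t\ge 0$, so Definition \ref{Definition 2} applies. Your closing observation that the slow mode drops out of the observed component is a nice explanatory gloss, but the argument itself coincides with the paper's.
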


\subsubsection{Case 1b: $v_{1}=1$ and $w_{1}=0$}

For this second subcase of Case 1, (\ref{Case1: ICCondition}) becomes
\begin{equation}
d_1\geq c_1.  \label{Case1b: ICCondition}
\end{equation}%
Using equations (\ref{Case1: Phi(t)})\ and (\ref{Case1: Psi(t)}) as well as
$v_{1}=1$ and $w_{1}=0$, we have
\begin{equation*}
\phi _{1}(t)-\psi _{1}(t) =  (c_{1}-d_{1})e^{\lambda _{1}t}\text{.}
\end{equation*}%
By (\ref{Case1b: ICCondition}), we have that $(c_{1}-d_{1})\leq 0$.
Thus, we conclude
$\phi_{1}(t)-\psi _{1}(t)\leq 0$ for all $t\geq 0$, and the following result has been
shown.\\

\begin{proposition}
Assume $($A1$)$, $($A2$)$, $($A3$)$ and that $\lambda_2<\lambda_1<0$. \ Given $\langle(x_r,y_r),(x_p,y_p)\rangle$, if $v_1=1$ and $w_1=0$ for eigenvectors $v$ and $w$ of $\lambda_1$ and $\lambda_2$, respectively, then $\dot{x}=Ax$ does not exhibit tolerance for $\langle(x_r,y_r),(x_p,y_p)\rangle$.
\end{proposition}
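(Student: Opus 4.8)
The plan is to carry out the same direct $x$-component comparison used in Case 1a, now with the roles of the two eigenmodes interchanged. First I would substitute $v_1 = 1$ and $w_1 = 0$ into the first-component expressions extracted from (\ref{Case1: Phi(t)}) and (\ref{Case1: Psi(t)}). The key simplification is that setting $w_1 = 0$ annihilates the $e^{\lambda_2 t}$ contribution to both first components, so that $\phi_1(t) = c_1 e^{\lambda_1 t}$ and $\psi_1(t) = d_1 e^{\lambda_1 t}$; subtracting then yields the single-mode difference
\[
\phi_1(t) - \psi_1(t) = (c_1 - d_1)\, e^{\lambda_1 t}.
\]

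Next I would feed in the admissibility constraint on the initial data. Under $v_1 = 1$, $w_1 = 0$, the general inequality (\ref{Case1: ICCondition}) supplied by (A3) reduces to (\ref{Case1b: ICCondition}), namely $d_1 \geq c_1$, so $c_1 - d_1 \leq 0$. Because $e^{\lambda_1 t} > 0$ for every $t \geq 0$, the right-hand side above is nonpositive for all such $t$, giving $\psi_1(t) \geq \phi_1(t)$ on $[0,\infty)$. Invoking Definition \ref{Definition 2} then closes the argument: this inequality is exactly the statement that $\dot{x} = Ax$ does not exhibit tolerance for $\langle(x_r,y_r),(x_p,y_p)\rangle$.

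I do not anticipate any real obstacle, since the proof is the mirror image of Case 1a. The only delicate point is the bookkeeping in collapsing (\ref{Case1: ICCondition}) to a clean single-coefficient inequality: one must verify that $w_1 = 0$ removes the $c_2$ and $d_2$ terms and that $v_1 = 1$ leaves $d_1$ and $c_1$ with unit coefficients, so that (A3) really does reduce to $d_1 \geq c_1$ and not to some sign-ambiguous combination. Once that is confirmed, the conclusion follows from the elementary fact that a single decaying exponential with a nonpositive coefficient never changes sign. Structurally, tolerance is impossible here because the $x$-component is governed solely by the $\lambda_1$ mode, so $\psi_1$, which begins with the larger coefficient on that mode, can never dip below $\phi_1$.
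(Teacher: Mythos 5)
Your proposal is correct and follows essentially the same argument as the paper: substituting $v_1=1$, $w_1=0$ reduces (A3) to $d_1 \geq c_1$, and the first-component difference collapses to $(c_1-d_1)e^{\lambda_1 t} \leq 0$, which is exactly Definition \ref{Definition 2}. No gaps; the reasoning matches the paper's proof of Case 1b step for step.
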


\subsubsection{Case 1c: $v_{1}=$ $w_{1}=1$}

Unlike Cases 1a and 1b, tolerance is a possibility in case 1c.  Proposition \ref{THM: Linear Tol 3} below states necessary and sufficient conditions on the coefficients of the solutions $\phi$ and $\psi$ in order for tolerance to be exhibited and also specifies the precise time value beyond which tolerance is exhibited, when it occurs.\\

\begin{proposition}
\label{THM: Linear Tol 3}Assume $($A1$)$, $($A2$)$, $($A3$)$, $\lambda_2<\lambda_1<0$, and $v_1=$ $w_1=1$ for eigenvectors $v$ and $w$ of $\lambda_1$ and $\lambda_2$, respectively. Given $\langle(x_r,y_r),(x_p,y_p)\rangle$, then there exists $T>0$ such that (\ref{system_linear}) will exhibit tolerance for all $t>T$ if and only if $c_1>d_1$ and $c_2<d_2$. Furthermore,
\begin{equation}
T=\frac{\ln [(d_{2}-c_{2})/(c_{1}-d_{1})]}{\lambda _{1}-\lambda _{2}}\text{.}
\label{EQN: Tcase1c}
\end{equation}
\end{proposition}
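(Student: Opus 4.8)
The plan is to reduce the entire question to the sign of a single scalar function of $t$, namely the difference $D(t) \defeq \phi_1(t) - \psi_1(t)$. Using the explicit solutions $(\ref{Case1: Phi(t)})$ and $(\ref{Case1: Psi(t)})$ together with $v_1 = w_1 = 1$, the first components collapse to $\phi_1(t) = c_1 e^{\lambda_1 t} + c_2 e^{\lambda_2 t}$ and $\psi_1(t) = d_1 e^{\lambda_1 t} + d_2 e^{\lambda_2 t}$, so that $D(t) = (c_1 - d_1)e^{\lambda_1 t} + (c_2 - d_2)e^{\lambda_2 t}$. Writing $\alpha \defeq c_1 - d_1$ and $\beta \defeq c_2 - d_2$, tolerance at a time $\tau$ is exactly the statement $D(\tau) > 0$, and the goal becomes characterizing when $D$ is eventually positive. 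I would record at the outset the one place where (A3) enters: with $v_1 = w_1 = 1$, condition $(\ref{Case1: ICCondition})$ reads $d_1 + d_2 \geq c_1 + c_2$, i.e.\ $\alpha + \beta \leq 0$.

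First I would factor out the slowly decaying mode, writing $D(t) = e^{\lambda_1 t}\, h(t)$ with $h(t) \defeq \alpha + \beta e^{(\lambda_2 - \lambda_1)t}$. Since $e^{\lambda_1 t} > 0$, the sign of $D$ is the sign of $h$, and because $\lambda_2 - \lambda_1 < 0$ we have $h(t) \to \alpha$ as $t \to \infty$. For the ``if'' direction, assume $c_1 > d_1$ and $c_2 < d_2$, i.e.\ $\alpha > 0$ and $\beta < 0$. Then $e^{(\lambda_2 - \lambda_1)t}$ is strictly decreasing and $\beta < 0$ makes $h$ strictly increasing, rising from $-\infty$ to the positive limit $\alpha$; hence $h$ has a unique zero, and $D > 0$ for all $t$ past it. Solving $h(T) = 0$, i.e.\ $e^{(\lambda_2 - \lambda_1)T} = -\alpha/\beta = (c_1 - d_1)/(d_2 - c_2)$, and inverting gives exactly $(\ref{EQN: Tcase1c})$.

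For the converse I would argue from the limit. If tolerance holds for all $t > T$, then $h(t) > 0$ there, and letting $t \to \infty$ forces $\alpha = \lim h(t) \geq 0$. The case $\alpha = 0$ is excluded using (A3): then $D(t) = \beta e^{\lambda_2 t}$, while $\alpha + \beta \leq 0$ gives $\beta \leq 0$, so $D \leq 0$ and no tolerance occurs, a contradiction. Thus $\alpha > 0$, proving $c_1 > d_1$. Now (A3) does the remaining work: from $\alpha + \beta \leq 0$ and $\alpha > 0$ we get $\beta \leq -\alpha < 0$, i.e.\ $c_2 < d_2$. I expect this to be the main subtlety: without (A3) the sign pattern $\alpha > 0$, $\beta > 0$ also produces permanent tolerance (indeed $D > 0$ for all $t$) yet violates $c_2 < d_2$, so the necessity of the second inequality genuinely depends on the constraint $x_p \geq x_r$ rather than on the dynamics alone.

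Finally I would note the positivity of $T$. The formula and $\lambda_1 - \lambda_2 > 0$ show $T > 0$ precisely when $(d_2 - c_2)/(c_1 - d_1) > 1$, i.e.\ when $\alpha + \beta < 0$, which under (A3) corresponds to the strict case $x_p > x_r$; in the boundary case $x_p = x_r$ one gets $T = 0$ and $D > 0$ for all $t > 0$, so tolerance is still eventual. A small point to check carefully is that $h$ is monotone (not merely eventually monotone), which is immediate here since it is affine in the single exponential $e^{(\lambda_2 - \lambda_1)t}$; this guarantees the zero is unique and that $T$ is a genuine threshold rather than one of several crossing times.
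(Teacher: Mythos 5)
Your proof is correct, and its sufficiency half is essentially the paper's computation: both factor the difference $\phi_1(t)-\psi_1(t)=(c_1-d_1)e^{\lambda_1 t}+(c_2-d_2)e^{\lambda_2 t}$ into an exponential prefactor times a monotone function of $e^{(\lambda_1-\lambda_2)t}$ and solve for the unique crossing time, arriving at (\ref{EQN: Tcase1c}). Where you genuinely diverge is necessity. The paper argues the contrapositive with a pointwise estimate: assuming $c_1\leq d_1$, it uses (A3) in the form $c_2-d_2\leq d_1-c_1$ to bound $\phi_1(t)-\psi_1(t)\leq (c_1-d_1)\left(e^{\lambda_1 t}-e^{\lambda_2 t}\right)\leq 0$ for \emph{all} $t\geq 0$ (and similarly for $c_2\geq d_2$), so tolerance fails at every time, not merely eventually --- a slightly stronger conclusion than the proposition needs. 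You instead pass to the limit $t\to\infty$ in $h(t)=\alpha+\beta e^{(\lambda_2-\lambda_1)t}>0$ to extract $\alpha\geq 0$ from dominance of the slow mode, rule out $\alpha=0$ via (A3), and then obtain $\beta<0$ purely from the constraint $\alpha+\beta\leq 0$. Your route is more conceptual and isolates a point the paper leaves implicit: the necessity of $c_2<d_2$ comes from (A3) alone (the pattern $\alpha>0,\ \beta>0$ gives permanent tolerance but is excluded by $x_p\geq x_r$), whereas the necessity of $c_1>d_1$ comes from the asymptotics. You also explicitly verify when the formula yields $T>0$ versus the boundary case $x_p=x_r$ giving $T=0$, a detail the paper's proof does not address even though the proposition asserts $T>0$; this is a small but real gap in the published argument that your write-up closes.
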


\begin{proof}
 Necessary Conditions. Assume that
$c_1 \leq d_1$. Since $v_{1}=w_{1}=1$, we may rewrite (\ref{Case1: ICCondition}) as
\begin{equation}
d_{1}+d_{2}\geq c_{1}+c_{2}.  \label{Condition}
\end{equation}%
Consider the difference between $\phi _{1}(t)$ and $\psi _{1}(t)$. Using (\ref{Case1: Phi(t)}), (\ref{Case1: Psi(t)}), $v_{1}=w_{1}=1$, and (\ref{Condition}), we have
\begin{eqnarray*}
\phi _{1}(t)-\psi _{1}(t)
&=&(c_{1}-d_{1})e^{\lambda _{1}t}+(c_{2}-d_{2})e^{\lambda _{2}t} \\
&\leq &(c_{1}-d_{1})e^{\lambda _{1}t}+(d_{1}-c_{1})e^{\lambda _{2}t} \\
&=&(c_{1}-d_{1})(e^{\lambda _{1}t}-e^{\lambda _{2}t}).
\end{eqnarray*}%
Since $\lambda _{2}<\lambda _{1}<0$ and
$c_1 \leq d_1$, it follows that $\phi _{1}(t)-\psi _{1}(t)\leq 0$, which means that $\psi _{1}(t)\geq \phi _{1}(t)$ for all $t\geq 0$. Hence, tolerance cannot
be exhibited for $c_1 \leq d_1$.
Similarly, it can be shown that (\ref{system_linear}) cannot exhibit tolerance for $c_2 \geq d_2$.
Thus, $c_1 > d_1$ and $c_2 < d_2$ are both necessary conditions for tolerance.

Sufficient Conditions. Assume that $c_1>d_{1}$ and $c_2<d_2$ both hold. Using (\ref{Case1: Phi(t)}), (\ref{Case1: Psi(t)}), and $v_1=w_1=1$, we have
\begin{eqnarray*}
\phi _{1}(t)-\psi _{1}(t) &=&(c_{1}-d_{1})e^{\lambda _{1}t}+(c_{2}-d_{2})e^{\lambda _{2}t}\text{.}\\
&=& \left( e^{(\lambda _{1}-\lambda _{2})t}+\frac{%
c_{2}-d_{2}}{c_{1}-d_{1}}\right) e^{\lambda _{2}t}(c_{1}-d_{1}).
\end{eqnarray*}%
By assumption, $(c_{1}-d_{1})>0$ and $(c_{2}-d_{2})<0$, and thus
\begin{equation*}
e^{\lambda _{2}t}(c_{1}-d_{1})>0 \; \mbox{ and } \; \frac{(c_{2}-d_{2})}{(c_{1}-d_{1})}<0.
\end{equation*}

\pagebreak

Therefore,

\begin{eqnarray*}
\phi _{1}(t)-\psi _{1}(t) &=&\left( e^{(\lambda _{1}-\lambda _{2})t}+\frac{%
(c_{2}-d_{2})}{(c_{1}-d_{1})}\right) e^{\lambda _{2}t}(c_{1}-d_{1}) >0 \\
&\Leftrightarrow &\left( e^{(\lambda _{1}-\lambda _{2})t}+\frac{(c_{2}-d_{2})%
}{(c_{1}-d_{1})}\right) >0 \\
&\Leftrightarrow &e^{(\lambda _{1}-\lambda _{2})t}>\frac{(d_{2}-c_{2})}{%
(c_{1}-d_{1})} \\
&\Leftrightarrow &t>\frac{\ln[(d_{2}-c_{2})/(c_{1}-d_{1})]}{
(\lambda _{1}-\lambda _{2})}
\end{eqnarray*}
\end{proof}

\subsection{Case 2: $\lambda_1=\lambda_2=\lambda <0$} In this case, $\lambda$ has either a one- or two-dimensional eigenspace. Thus, two subcases need to be considered.

\subsubsection{Case 2a: $\lambda_1=\lambda_2=\lambda <0$ and $\lambda $ has a two-dimensional eigenspace} For this case, $\lambda $ is an eigenvalue of $A$ with multiplicity two for which two linearly independent eigenvectors can be found. Let $v$ and $w$ be linear independent eigenvectors of $\lambda $. Then, any initial condition can be uniquely written as a linear combination of $v$ and $w$. For the initial condition $(x_r,y_r)$, we may write $(x_r,y_r)=c_1v+c_2w=(c_1v_1+c_2w_1,c_1v_2+c_2w_2)$, with $c_1,c_2\in \mathbb{R}$. Thus, the solution, $\phi (t)$, to the IVP $\dot{x}=Ax$, $\phi(0)=(x_r,y_r)$ is
\begin{equation}
\phi (t)=c_1ve^{\lambda t}+c_2we^{\lambda t}=(c_1v_1+c_2w_1,c_1v_2+c_2w_2)e^{\lambda t}=(x_re^{\lambda t},y_re^{\lambda t}).
\label{Case2a: phi}
\end{equation}
Similarly, consider the initial condition $(x_p,y_p)$, which may also uniquely be written as $(x_p,y_p)=d_1v+d_2w=(d_1v_1+d_2w_1,d_1v_2+d_2w_2)$, with $d_1,d_2\in \mathbb{R}$. The solution $\psi (t)$ to the IVP $\dot{x}=Ax$, $\psi(0)=(x_p,y_p)$ is
\begin{equation}
\psi (t)=d_1ve^{\lambda t}+d_2we^{\lambda t}=(d_1v_1+d_2w_1,d_1v_2+d_2w_2)e^{\lambda t}=(x_pe^{\lambda t},y_pe^{\lambda t}).
\label{Case2a: psi}
\end{equation}

Consider the difference between $\phi_1(t)$ and $\psi_1(t)$. Using  (\ref{Case2a: phi}),  (\ref{Case2a: psi}), $(A3)$ and the fact that $e^{\lambda t}>0$ for all $t\geq 0$ we have that :
\begin{equation*}
\phi_1(t)-\psi_1(t) = x_re^{\lambda t}-x_pe^{\lambda t}= (x_r-x_p)e^{\lambda t}\leq 0.
\end{equation*}
Thus, $\psi_1(t)\geq \phi_1(t)$ for all $t\geq 0$, and the following has been shown:

\begin{proposition}
Assume $($A1$)$, $($A2$)$, $($A3$)$ and that $\lambda_1=\lambda_2=\lambda <0$. Given $\langle(x_r,y_r),(x_p,y_p)\rangle$, if $\lambda $ has two linearly independent eigenvectors, then $\dot{x}=Ax$ cannot exhibit tolerance for $(x_r,y_r)$.
\end{proposition}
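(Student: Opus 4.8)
The plan is to exploit the fact that a repeated eigenvalue $\lambda$ with a two-dimensional eigenspace forces the flow to be a pure radial contraction, so that the ordering of the first components imposed by (A3) is preserved for all time. First I would record the structural observation that drives everything: if $\lambda$ admits two linearly independent eigenvectors $v$ and $w$, then every vector in $\mathbb{R}^2$ is an eigenvector of $A$ with eigenvalue $\lambda$, which is to say $A=\lambda I$. Consequently the solution through any initial point is simply that point scaled by the common factor $e^{\lambda t}$, and this factor cannot alter the relative order of the first coordinates.

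Concretely, using the eigenvector decompositions $(x_r,y_r)=c_1v+c_2w$ and $(x_p,y_p)=d_1v+d_2w$ together with the fact that both $v$ and $w$ are eigenvectors for $\lambda$, I would write the solutions in the explicit form
\begin{equation*}
\phi(t)=(c_1v+c_2w)e^{\lambda t}=(x_re^{\lambda t},\,y_re^{\lambda t}),\qquad
\psi(t)=(d_1v+d_2w)e^{\lambda t}=(x_pe^{\lambda t},\,y_pe^{\lambda t}),
\end{equation*}
exactly as in equations (\ref{Case2a: phi}) and (\ref{Case2a: psi}). The value of this form is that the single scalar $e^{\lambda t}$ is shared by both trajectories.

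Then I would simply subtract the first components and invoke (A3). Since
\begin{equation*}
\phi_1(t)-\psi_1(t)=(x_r-x_p)\,e^{\lambda t},
\end{equation*}
and (A3) gives $x_r\le x_p$ while $e^{\lambda t}>0$ for every $t\ge 0$, the difference satisfies $\phi_1(t)-\psi_1(t)\le 0$ on $[0,\infty)$. Hence $\psi_1(t)\ge\phi_1(t)$ for all $t\ge 0$, which is precisely the condition in Definition \ref{Definition 2} for the system not to exhibit tolerance.

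I do not expect any genuine obstacle here: the only step that requires thought is the initial observation that a two-dimensional eigenspace forces $A=\lambda I$, after which the conclusion is an immediate sign computation. This is in sharp contrast to Case 1c, where the two eigendirections contract at genuinely different rates and tolerance can arise from the faster decay of the $\lambda_2$-component of $\psi$; here a single contraction rate leaves no room for the first components to cross.
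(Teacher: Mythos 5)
Your proposal is correct and follows essentially the same argument as the paper: both write the solutions as $\phi(t)=(x_re^{\lambda t},y_re^{\lambda t})$ and $\psi(t)=(x_pe^{\lambda t},y_pe^{\lambda t})$ via the eigenvector decomposition, then conclude from (A3) that $\phi_1(t)-\psi_1(t)=(x_r-x_p)e^{\lambda t}\le 0$ for all $t\ge 0$. Your explicit remark that a two-dimensional eigenspace forces $A=\lambda I$ is a nice clarification but does not change the substance of the argument.
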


\subsubsection{Case 2b: $\lambda_1=\lambda_2=\lambda <0$ and $\lambda $ has a one-dimensional eigenspace} In this case, let $v$ be an eigenvector of $\lambda $. One solution to (\ref{system_linear}) is $x^{(1)}(t)=ve^{\lambda t}$. A second solution to (\ref{system_linear}) is $x^{(2)}(t)=vte^{\lambda t}+\bar{v}e^{\lambda t}$, where $\bar{v}$ is a generalized eigenvector satisfying $(A-\lambda I)\bar{v}=v$. The initial condition $(x_r,y_r)$ can be uniquely written as a linear combination of $v$ and $\bar{v}$,
\begin{equation*}
(x_r,y_r)=c_{1}v+c_{2}\bar{v}=(c_{1}v_{1}+c_{2}\bar{v}_{1},c_{1}v_{2}+c_{2}\bar{v}_{2})\text{,with }c_{1},c_{2}\in \mathbb{R}\text{.}
\end{equation*}

\noindent The solution $\phi (t)$ to the IVP $\dot{x}=Ax$, $\phi(0)=(x_r,y_r)$ is
\begin{eqnarray}
\phi(t) &=& c_{1}ve^{\lambda t}+c_{2}(vte^{\lambda t}+\bar{v}e^{\lambda t})
\notag \\&=&(c_{1}v_{1}e^{\lambda t}+c_{2}(v_{1}te^{\lambda t}+\bar{v}_{1}e^{\lambda t}),c_{1}v_{2}e^{\lambda t}+c_{2}(v_{2}te^{\lambda t}+\bar{v}_{2}e^{\lambda t}))\text{.}  \label{Case2: Phi(t)}
\end{eqnarray}
Similiary, the initial condition, $(x_p,y_p)$, can be uniquely written as a linear combination of $v$ and $\bar{v}$,
\begin{equation*}
(x_p,y_p)=d_{1}v+d_{2}\bar{v}=(d_{1}v_{1}+d_{2}\bar{v}_{1},d_{1}v_{2}+d_{2}\bar{v}_{2})\text{,with }d_{1},d_{2}\in \mathbb{R}\text{,}
\end{equation*}
and the solution $\psi(t)$ to the IVP $\dot{x}=Ax$, $\psi (0)=(x_p,y_p)$ is
\begin{eqnarray}
\psi (t) &=&d_{1}ve^{\lambda t}+d_{2}(vte^{\lambda t}+\bar{v}e^{\lambda t}) \notag \\
&=&(d_{1}v_{1}e^{\lambda t}+d_{2}(v_{1}te^{\lambda t}+\bar{v}_{1}e^{\lambda
t}),d_{1}v_{2}e^{\lambda t}+d_{2}(v_{2}te^{\lambda t}+\bar{v}_{2}e^{\lambda
t}))\text{.}  \label{Case2: Psi(t)}
\end{eqnarray}

The following proposition, given without the details of its proof, states the result for this case.\\

\begin{proposition}
\label{THM: Case2b}Let $\langle(x_r,y_r),(x_p,y_p)\rangle$. Assume $($A1$)$, $($A2$)$, $($A3$)$, and that $\lambda_1=\lambda_2=\lambda <0$. Suppose that $\lambda $ has a one-dimensional eigenspace. Let $v$ be an eigenvector of $\lambda $ and let $\bar{v}$ be a corresponding generalized eigenvector.
\begin{description}
\item{(i)} If $v_1=1$ and $\bar v_1=0$, then there exists $T>0$ such that (\ref{system_linear}) will exhibit tolerance for $\langle(x_r,y_r),(x_p,y_p)\rangle$ for all $t>T$ if and only if $c_1 \leq d_1$ and  $c_2>d_2$ both hold. Furthermore,
\begin{equation}
T=\frac{d_{1}-c_{1}}{c_{2}-d_{2}}\text{,}
\label{EQN: Tcase2}
\end{equation}
and the difference between $\phi _{1}(t)$ and $\psi _{1}(t)$ at $t>T$ will be less than or equal to $(c_{2}-d_{2})te^{\lambda t}$. Therefore, $\underset{t>T}{\max}\left\{ (c_{2}-d_{2})te^{\lambda t}\right\} =\frac{d_{2}-c_{2}}{\lambda e}$, which occurs at $t=\frac{-1}{\lambda }$, is the greatest degree of tolerance that is possible.\\
\item{(ii)} If $\bar v_1 \neq 0$, then (\ref{system_linear}) will not exhibit tolerance for $\langle(x_r,y_r),(x_p,y_p)\rangle$.
\end{description}
\end{proposition}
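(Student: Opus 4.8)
The plan is to work directly with the explicit first components of $\phi$ and $\psi$ given in (\ref{Case2: Phi(t)}) and (\ref{Case2: Psi(t)}), form their difference, and read off its sign. Factoring out the (always positive) exponential, I would write
\begin{equation*}
\phi_1(t)-\psi_1(t)=e^{\lambda t}\bigl[(c_1-d_1)v_1+(c_2-d_2)(v_1 t+\bar v_1)\bigr],
\end{equation*}
so that, since $e^{\lambda t}>0$, the presence of tolerance at a time $t$ is equivalent to positivity of the bracketed quantity there. Everything then reduces to analyzing an affine function of $t$, and the two parts of the proposition correspond to the two genuinely distinct normalizations of the eigenvector $v$.

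For part (i), substituting $v_1=1$ and $\bar v_1=0$ collapses the bracket to $p(t)=(c_1-d_1)+(c_2-d_2)t$. The first point I would record is that with this normalization $x_r=c_1$ and $x_p=d_1$, so (A3) is \emph{exactly} the hypothesis $c_1\le d_1$; one of the two stated conditions is therefore automatic. For necessity, if $c_2\le d_2$ then, together with $c_1\le d_1$, one has $p(t)\le 0$ for all $t\ge 0$, ruling out tolerance, so $c_2>d_2$ is necessary. For sufficiency, when $c_1\le d_1$ and $c_2>d_2$ the line $p(t)$ has positive slope and nonpositive intercept, hence $p(t)>0$ precisely for $t>(d_1-c_1)/(c_2-d_2)$; this yields both the \emph{iff} statement and the formula (\ref{EQN: Tcase2}) for $T$.

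The claim about the greatest possible degree of tolerance I would settle by a one-variable maximization. Since $c_1-d_1\le 0$, the difference is bounded above by $h(t)=(c_2-d_2)te^{\lambda t}$; differentiating gives $h'(t)=(c_2-d_2)e^{\lambda t}(1+\lambda t)$, whose unique critical point is $t=-1/\lambda>0$, and this is a maximum because $\lambda<0$ makes $h'$ change from positive to negative there. Evaluating $h$ at this point produces the stated maximal value $(d_2-c_2)/(\lambda e)$.

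For part (ii), the substantive point — and the step I expect to be the main obstacle — is recognizing what the hypothesis $\bar v_1\ne 0$ actually encodes, since taken literally it is not the right condition. If $v_1\ne 0$ one could replace $\bar v$ by the equally valid generalized eigenvector $\bar v-(\bar v_1/v_1)v$ to force $\bar v_1=0$, returning to part (i) where tolerance \emph{is} possible; thus the irreducible content is that $v_1=0$, which by linear independence of $v$ and $\bar v$ indeed forces $\bar v_1\ne 0$. Setting $v_1=0$ in the factored difference kills the term linear in $t$, leaving
\begin{equation*}
\phi_1(t)-\psi_1(t)=e^{\lambda t}(c_2-d_2)\bar v_1=e^{\lambda t}(x_r-x_p),
\end{equation*}
where I have used $x_r=c_2\bar v_1$ and $x_p=d_2\bar v_1$. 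By (A3) this is $\le 0$ for all $t$, so $\psi_1\ge\phi_1$ throughout and no tolerance occurs. Once the normalization argument isolating $v_1=0$ is stated cleanly, the conclusion is immediate.
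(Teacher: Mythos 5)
Your proof is correct, and there is nothing in the paper to compare it against line by line: the paper explicitly states this proposition ``without the details of its proof.'' Your argument is the natural completion of the setup the paper does provide in (\ref{Case2: Phi(t)})--(\ref{Case2: Psi(t)}), and it runs parallel to the paper's own proof of Proposition \ref{THM: Linear Tol 3} for Case 1c: factor the positive exponential out of $\phi_1(t)-\psi_1(t)$ and analyze the sign of what remains, which here is affine in $t$ rather than a difference of exponentials. Your part (i) is complete — the identification $x_r=c_1$, $x_p=d_1$ under the normalization makes $c_1\le d_1$ exactly (A3), the sign analysis of $p(t)=(c_1-d_1)+(c_2-d_2)t$ gives both directions of the equivalence and the formula (\ref{EQN: Tcase2}), and the calculus maximization of $(c_2-d_2)te^{\lambda t}$ matches the stated bound. (One caveat you share with the proposition itself: the maximum over $t>T$ sits at $t=-1/\lambda$ only when $T<-1/\lambda$; in general $(d_2-c_2)/(\lambda e)$ is an upper bound on the achievable difference, which is all the final claim really needs.) Your handling of part (ii) is the most valuable contribution: you are right that the hypothesis $\bar v_1\neq 0$ is not basis-invariant and, read literally, contradicts part (i). Concretely, for $A=\begin{pmatrix}-1&1\\0&-1\end{pmatrix}$ with $v=(1,0)^T$ and the legitimate generalized eigenvector $\bar v=(1,1)^T$, one has $\bar v_1\neq 0$, yet $(x_r,y_r)=(1,1)$, $(x_p,y_p)=(1,0)$ satisfies (A1)--(A3) and produces tolerance, exactly as part (i) predicts after replacing $\bar v$ by $\bar v-v=(0,1)^T$. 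So the only reading under which (ii) is true is the one you isolate: $v_1=0$, which forces $\bar v_1\neq 0$, cannot be normalized away, and yields $\phi_1(t)-\psi_1(t)=(x_r-x_p)e^{\lambda t}\le 0$ for all $t\ge 0$. This matches the paper's evident intent — its eigenvector configuration $(d)$ in Section \ref{section: EVCRegions} tacitly places $\bar v$ along the $y$-axis, i.e.\ adopts the normalization $\bar v_1=0$ whenever $v_1\neq 0$ — but your proof makes explicit a convention the paper leaves imprecise.
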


\subsection{Eigenvector Configurations and Regions of Tolerance}\label{section: EVCRegions}

Of the cases discussed above, only Cases 1c and 2b yield the possibility of tolerance. The results stated above give analytical conditions for the existence of
tolerance in terms of coefficients of general solutions to (\ref{system}).
We find that these results are more useful when they are recast geometrically.
To achieve this reformulation, we consider eigenvector configurations (EVC) that accommodate solutions that satisfy the nonnegativity requirement (A2).  Each such configuration is displayed in Figure \ref{FIG: EVC_Regions}. For each configuration, we subdivide the positive quadrant into regions and then, for $(x_r,y_r)$ in each region, determine precisely which locations for $(x_p,y_p)$ will lead to tolerance and which will not.  The results for all the eigenvector configurations shown in \ref{FIG: EVC_Regions} are summarized in Table \ref{Tab: EVCResults} and are illustrated in the figures referenced in the table.

\begin{figure}[h]
\begin{center}
\includegraphics[width=5in]{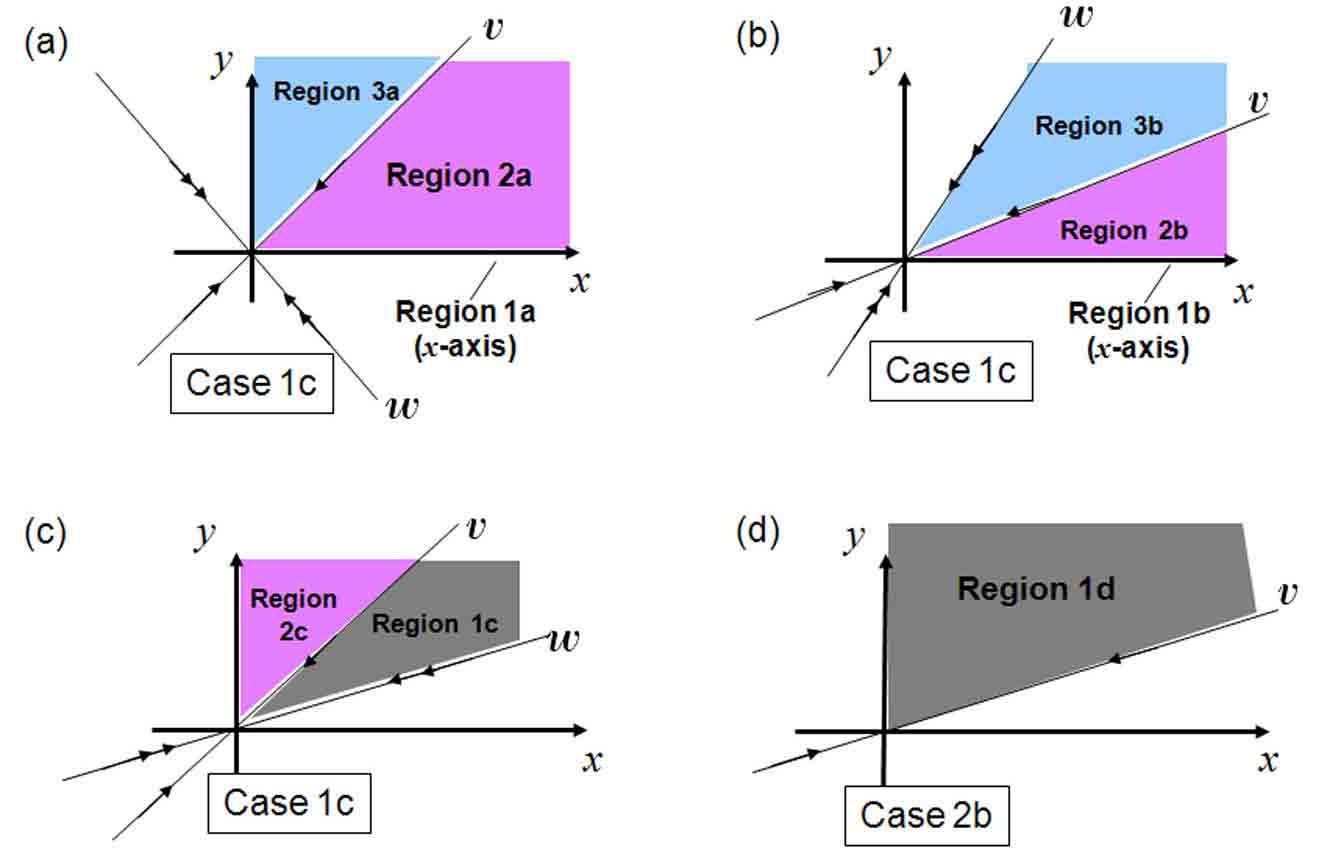}
\caption[Regions of interest in the first quadrant for four relevant eigenvector configurations]{Regions of interest in the first quadrant for four relevant eigenvector configurations.  Note that we label the weak eigenvector $v$ with one arrow and the strong eigenvector $w$ with two arrows. }\label{FIG: EVC_Regions}
\end{center}
\end{figure}

\begin{table}
\begin{center}
	\begin{tabular}{  p{2.5cm} | p{2cm} | p{2.8cm} | p{1.7cm} }
	\hline
	Eigenvector Configuration: & If $(x_r, y_r)$ is in Region: & Then, tolerance is produced by $(x_p, y_p)$ in: & Figure Reference: \\
\hline
\hline
\multirow{7}{*}{(a)  Figure \ref{FIG: EVC_Regions}a} & & & \\
 & 1a & None  & Figure \ref{FIG: AIO_EVC_a} (top) \\
 & & & \\
 & 2a  & Region $\mathbb{IV}_{2a}$  & Figure \ref{FIG: AIO_EVC_a} (middle) \\
 & & & \\
 & 3a  & Region $\mathbb{IV}_{3a}$  & Figure \ref{FIG: AIO_EVC_a} (bottom) \\
 & & & \\
 \hline
\multirow{7}{*}{(b)  Figure \ref{FIG: EVC_Regions}b} & & & \\
 & 1b  & Region $\mathbb{I}_{1b}$  & Figure \ref{FIG: AIO_EVC_b} (top) \\
 & & & \\
 & 2b  & Region $\mathbb{I}_{2b}$  & Figure \ref{FIG: AIO_EVC_b} (middle)\\
 & & & \\
 & 3b  & Region $\mathbb{II}_{3b}$  & Figure \ref{FIG: AIO_EVC_b} (bottom) \\
 & & & \\
\hline
\multirow{5}{*}{(c)  Figure \ref{FIG: EVC_Regions}c} & & & \\
 & 1c & Region $\mathbb{IV}_{1c}$  & Figure \ref{FIG: AIO_EVC_c} (top) \\
 & & & \\
 & 2c  & Region $\mathbb{IV}_{2c}$  & Figure \ref{FIG: AIO_EVC_c} (bottom) \\
 & & & \\
\hline
\multirow{3}{*}{(d) Figure \ref{FIG: EVC_Regions}d} & & & \\
 & 1d  & Region $\mathbb{IV}_{1d}$  & Figure \ref{FIG: EVC_a_2bii_2pics} \\
 & & & \\
\hline
\hline
	\end{tabular}
\end{center}
\caption[Table summarizing eigenvector configuration results]{Summary of tolerance results for eigenvector configurations shown in Figure \ref{FIG: EVC_Regions}}
\label{Tab: EVCResults}
\end{table}

\subsubsection{\noindent Eigenvector Configuration $(a)$}

For eigenvector configuration $(a)$, seen in the top left panel of Figure \ref{FIG: EVC_Regions}, there are three regions in which to consider initial conditions:\\

\begin{itemize}
\item \textbf{REGION 1a}: $\ (x_r,y_r)$ on the $x$-axis

\item \textbf{REGION 2a}: $\ (x_r,y_r)$ in the first quadrant below the weak eigenvector $v$ and above the $x$-axis

\item \textbf{REGION 3a}: $\ (x_r,y_r)$ in the first quadrant above the eigenvector $v$\\
\end{itemize}

\noindent Now, we explain how to identify the regions of tolerance given an initial condition $(x_r,y_r)$, using Regions 1a and 2a as examples.\\

\textbf{REGION 1a}: First, we look at the case when the initial condition is on the $x$-axis. In the top left panel of Figure \ref{FIG: AIO_EVC_a}, an arbitrary point on the $x$-axis is shown in the context of eigenvector configuration $(a)$, with lines drawn (portions dashed), showing the addition of scalar multiples of the two eigenvectors to attain the point $(x_r,y_r)$.
We refer to these lines as the $c_{1}$-line and $c_{2}$-line. In this case, they divide the first quadrant into three different subregions, as shown in the top right panel of Figure \ref{FIG: AIO_EVC_a}.

Recall that the P trajectory's initial condition $(x_p,y_p)$
was expressed as $(x_p,y_p)=d_{1}v+d_{2}w$.
For all $(x_p,y_p)$ in a given subregion, there is a corresponding relationship between $d_1,d_2$ and $c_1,c_2$.
Using this relationship, we determine if there exists a region where the criteria $c_1>d_1$ and $c_2<d_2$ of Proposition \ref{THM: Linear Tol 3} and the initial condition criterion $(x_p \geq x_r)$ are all satisfied.
For any $(x_p,y_p)$ in such a region, tolerance will occur, while
for $(x_p, y_p)$ not in such a region, tolerance will not occur .

In fact, for eigenvector configuration $(a)$, if $(x_r,y_r)$ is on the $x$-axis, then there are no subregions in the first quadrant where both $d_1<c_1$ and $d_2>c_2$ hold.
In particular, in $I_{1a}$, $d_1<c_1$ and $d_2<c_2$; in $II_{1a}$, $d_1>c_1$ and $d_2<c_2$; and in $III_{1a}$, $d_1>c_1$ and $d_2>c_2$.
Thus, there exist no $(x_p,y_p)$ that produce tolerance.\\

\textbf{REGION 2a}: Let $(x_r,y_r)$ be in the first quadrant below the weak
eigenvector $v$ (but not on the $x$-axis) in eigenvector configuration $(a)$. The middle left panel of Figure \ref{FIG: AIO_EVC_a} shows an arbitrary point in this region, with lines drawn (portions dashed), showing the addition of the two eigenvectors to attain the point $(x_r,y_r)$.
The middle right panel of Figure \ref{FIG: AIO_EVC_a} shows the four subregions formed in the first quadrant by the $c_1$-line and $c_2$-line. Note that region $\mathbb{IV}_{2a}$ only includes points satisfying $x \geq x_r$. In general, we follow the convention of truncating those subregions that satisfy Proposition \ref{THM: Linear Tol 3} to ensure that $($A3$)$ is satisfied.

In this case, if $(x_p,y_p) \notin \mathbb{IV}_{2a}$, then the conditions of Proposition \ref{THM: Linear Tol 3} fail and tolerance will not occur. In contrast, for $(x_p,y_p)\in \mathbb{IV}_{2a}$, we have that $x_p\geq x_r$ and that $d_1<c_1$ and $d_2>c_2$, such that all of the conditions of Proposition \ref{THM: Linear Tol 3} hold. Hence, for eigenvector configuration $(a)$, if $(x_r,y_r)$ is in the first quadrant below the weak eigenvector $v$ (but not on the $x$-axis), then tolerance will be exhibited precisely for all $(x_p,y_p)\in \mathbb{IV}_{2a}$.\\

\textbf{REGION 3a}: Similarly to the case of Region 2a, the $c_1$-line and
$c_2$-line partition the first quadrant into four subregions, as shown in Figure \ref{FIG: AIO_EVC_a}. The conditions for tolerance only hold in subregion $\mathbb{IV}_{3a}$, which has been truncated to include only points satisfying $x \geq x_r$.

\begin{figure}[h]
\begin{center}
\includegraphics[width=5in]{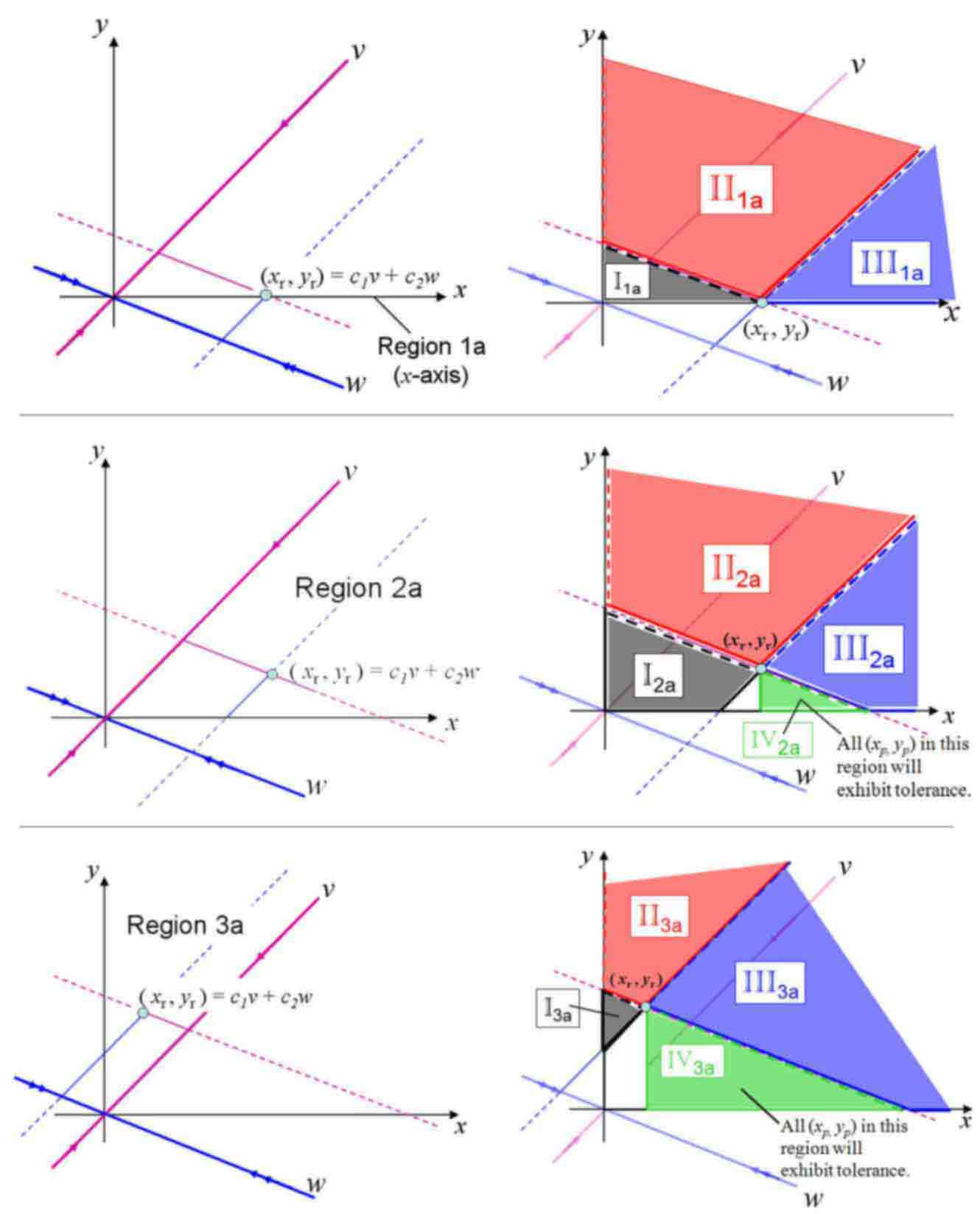}
\caption[Eigenvector configuration $(a)$ with an arbitrary initial condition $(x_r,y_r)$
labeled in various regions.]{Left Side: Eigenvector configuration $(a)$ with an
arbitrary initial condition $(x_r,y_r)$ labeled in Region 1a-3a. Right Side: The first quadrant partitioned into several different subregions by the $c_{1}$-and $c_{2}$-lines associated with the point $(x_r,y_r)=c_{1}v+c_{2}w$
lying in one of the initial regions 1a-3a.}\label{FIG: AIO_EVC_a}
\end{center}
\end{figure}

\begin{figure}[h]
\begin{center}
\includegraphics[width=4in]{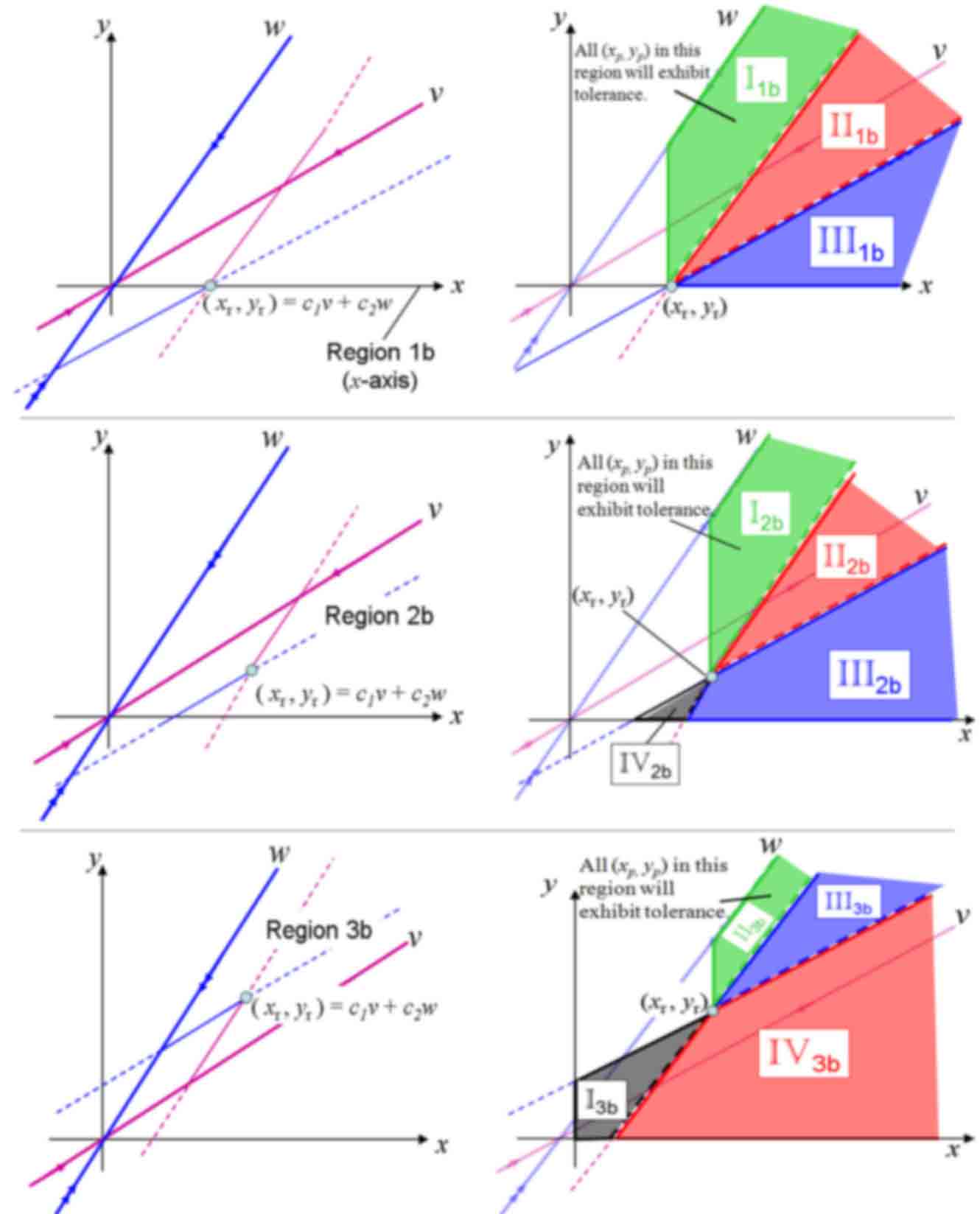}
\caption[Eigenvector configuration $(b)$ with an arbitrary initial condition $(x_r,y_r)$
labeled in various regions.]{Left Side: Eigenvector configuration $(b)$ with an
arbitrary initial condition $(x_r,y_r)$ labeled in Regions 1b-3b. Right Side: The first quadrant partitioned into several different regions by the $c_{1}$-and $c_{2}$-lines associated with the point $(x_r,y_r)=c_{1}v+c_{2}w$ lying in one of the initial regions 1b-3b.}\label{FIG: AIO_EVC_b}
\end{center}
\end{figure}

\pagebreak
\subsubsection{\noindent Eigenvector Configuration $(b)$}
For eigenvector configuration $(b)$, seen in the top right panel of Figure \ref{FIG: EVC_Regions}, there are three regions in which to consider initial conditions:

\begin{itemize}
\item \textbf{REGION 1b}: $\ (x_{c},y_{c})$ on the $x$-axis

\item \textbf{REGION 2b}: $\ (x_{c},y_{c})$ in the first quadrant below the
weak eigenvector $v$ and above the $x$-axis

\item \textbf{REGION 3b}: $\ (x_{c},y_{c})$ in the first quadrant above the weak
 eigenvector $v$ and below the strong eigenvector $w$.
\end{itemize}

\noindent The results for each region are summarized in Table \ref{Tab: EVCResults} and shown in Figure \ref{FIG: AIO_EVC_b}.\\

\subsubsection{\noindent Eigenvector Configuration $(c)$}
\begin{figure}[h]
\begin{center}
\includegraphics[width=4in]{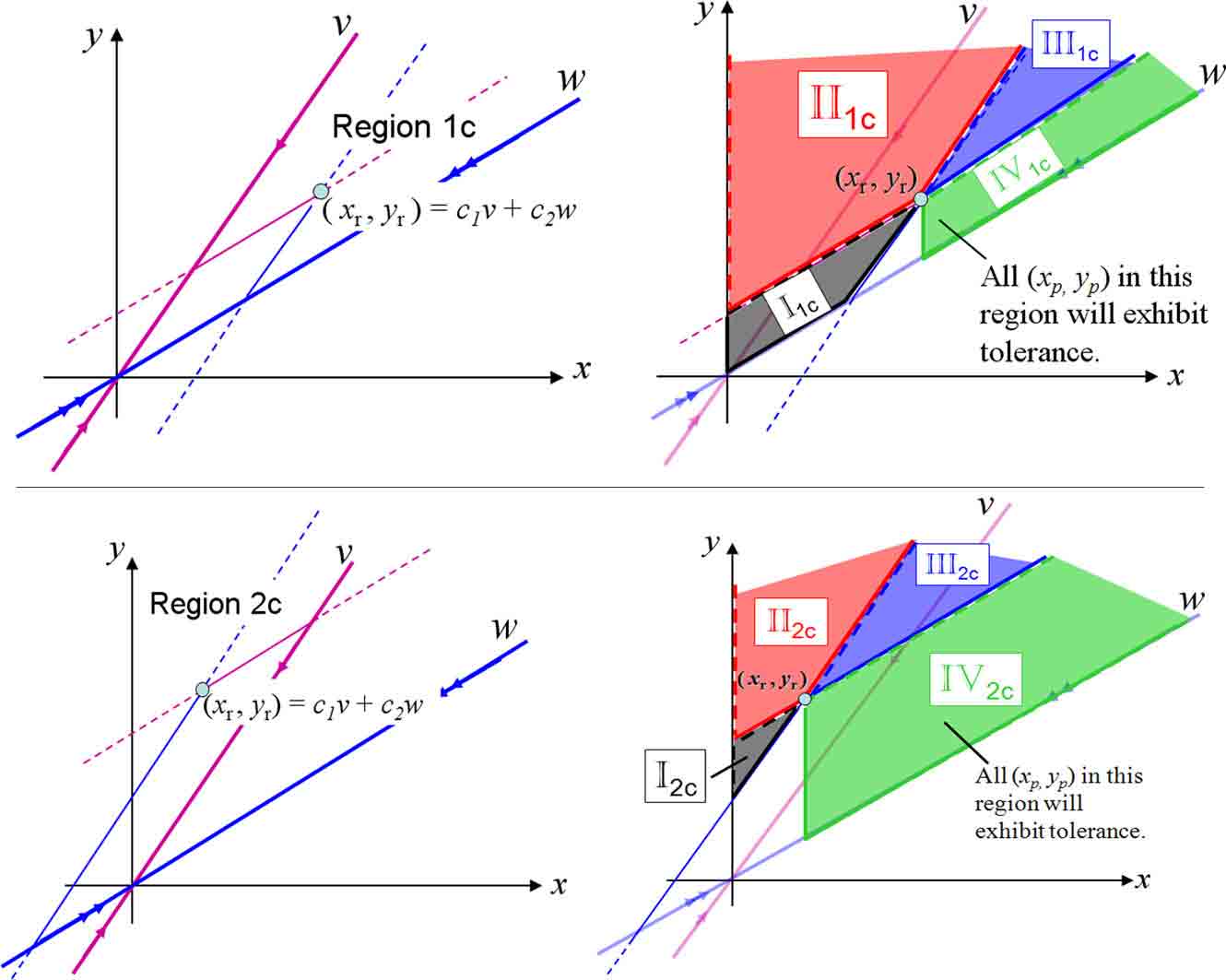}
\caption[Eigenvector configuration $(c)$ with an arbitrary initial condition $(x_r,y_r)$
labeled in various regions.]{Left Side: Eigenvector configuration $(c)$ with an
arbitrary initial condition $(x_r,y_r)$ labeled in Regions 1c-2c. Right Side: The first quadrant partitioned into several different regions by the $c_{1}$-and $c_{2}$-lines associated with the point $(x_r,y_r)=c_{1}v+c_{2}w$ lying in one of the initial regions 1c-2c.}\label{FIG: AIO_EVC_c}
\end{center}
\end{figure}

For eigenvector configuration $(c)$, seen in the bottom left panel of Figure \ref{FIG: EVC_Regions}, there are two regions in which to consider initial conditions:

\begin{itemize}
\item \textbf{REGION 1c}: $(x_r,y_r)$ in the first quadrant below the weak
eigenvector $v$ and above the strong eigenvector $w$

\item \textbf{REGION 2c}: $(x_r,y_r)$ in the first quadrant above both
 eigenvectors
\end{itemize}

\noindent Table \ref{Tab: EVCResults} along with Figure \ref{FIG: AIO_EVC_c} summarize the conclusions about tolerance for the regions in eigenvector configuration (c).\\

\subsubsection{\noindent Eigenvector Configuration $(d)$}
\begin{figure}[h]
\begin{center}
\includegraphics[width=5in]{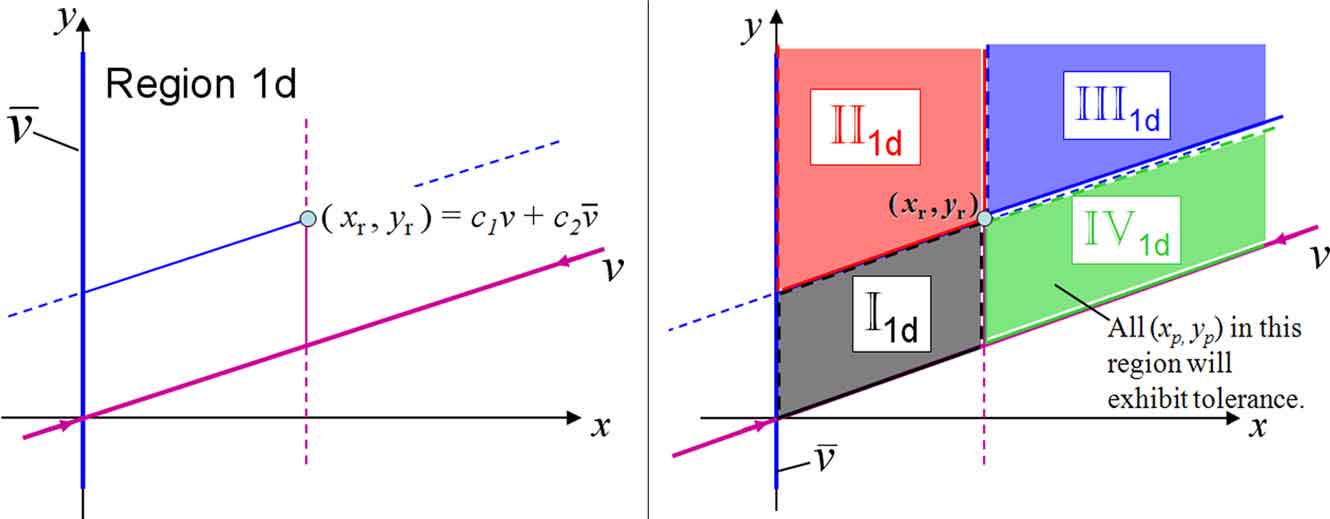}
\caption[Eigenvector configuration $(d)$ with an arbitrary initial condition $(x_r,y_r)$ labeled in Region 1d ($x$-axis).]{Left Panel: Eigenvector configuration $(d)$ with an arbitrary initial condition $(x_r,y_r)$ labeled in Region 1d. Right Panel: The first quadrant of eigenvector configuration $(d)$ partitioned into four subregions by the $c_{1}$- and $c_{2}$-lines associated with the point $(x_r,y_r)=c_1v+c_2\bar{v}$ lying in Region 1d.}
\label{FIG: EVC_a_2bii_2pics}
\end{center}
\end{figure}

To finish our analysis, we examine eigenvector configuration (d), seen in the bottom right panel of Figure \ref{FIG: EVC_Regions}. There is only one region in which to consider initial conditions to explore the existence of tolerance.

\begin{itemize}
\item \textbf{REGION 1d}: $\ (x_r,y_r)$ in the first quadrant above $v$
\end{itemize}

\noindent The conclusion regarding tolerance for this case (Case 2b) was given by Proposition \ref{THM: Case2b}, which shows that it is necessary and sufficient that $\bar{v}_1=0$, $c_1\leq d_1$, and $c_2>d_2$ for tolerance to be exhibited in (\ref{system_linear}). In the left panel of Figure \ref{FIG: EVC_a_2bii_2pics} an arbitrary point in Region 1d is shown in the context of eigenvector configuration $(d)$, with lines drawn (portions dashed), showing the addition of scalar multiples of the eigenvector $v$ and the generalized eigenvector, $\bar{v}$, to attain the point $(x_r,y_r)$. Since $\bar{v}_1=0$ was assumed, the blue line along the $y$-axis represents $\bar{v}$.

The conditions $c_1\leq d_1$ and $c_2>d_2$ are satisfied precisely for those $(x_p,y_p)\in $ $\mathbb{IV}_{1d}$, the region labeled in the right panel of Figure \ref{FIG: EVC_a_2bii_2pics}.
 Moreover, $x_p\geq x_r$ in this region as well. Hence, tolerance will be produced by any $(x_p,y_p)\in \mathbb{IV}_{1d}$, when $(x_r,y_r)$ is in Region 1d under eigenvector configuration (d).

\section{Discussion and Conclusions}
Our consideration of  tolerance serves as an example of how dynamical systems questions can arise from biological phenomena. We initiated our analysis of tolerance under assumptions representative of typical experimental preconditioning protocols used in the study of the acute inflammatory response \cite{Day06,Berg95,Rayhane99,Sly04,Wysocka01}. However, in this paper, we present a generalized analysis, allowing relatively general choices of initial conditions for the reference and perturbed trajectories, since the ideas of inhibition and tolerance, as we have defined them, are themselves quite general. The goal of this analysis is to use information about the initial conditions of the R and P trajectories and the vector field to determine {\em a priori} if the associated trajectories will or will not exhibit tolerance. In tolerance experiments, by applying the challenge dose to the preconditioning trajectory at different times, an experimentalist could generate a continuous curve of possible initial conditions for what we call the P trajectory, and our analysis aims to consider all such initial conditions, to fully characterize the possibility of tolerance within a given experimental set-up.

In the context of two-dimensional nonlinear systems of ODE, it can be difficult to make general statements specifying conditions under which tolerance will be guaranteed to occur. However, our work provides several fundamental statements about configurations of the initial condition $(x_p,y_p)$ for the P trajectory, relative to the R trajectory, that will or will not lead to tolerance. For example, in Section \ref{sub: basic} we have characterized the case when the R trajectory is $n$-excitable, showing that there always exists a subset of the basin of attraction where tolerance is guaranteed to occur for all $(x_p,y_p)$ in the subset. Excitable trajectories are common in systems describing various biological constructs and the idea of tolerance may be important to the ensuing analysis of such systems. By using isoclines and the concept of inhibition, we also present a framework in Section \ref{sub: isoclines} that can be used to derive specific conditions under which tolerance can be ruled out or guaranteed in particular examples.   Techniques such as time interval estimates in Section \ref{Subsection: TimeEstimates} exploit these ideas to achieve a closer examination of transient behavior in the absence of an analytical solution.

In the linear case, we have fully characterized the conditions under which tolerance will or will not occur. A graphical view of the phase plane immediately reveals points $(x_p,y_p)$ that produce tolerance relative to a given $(x_r,y_r)$. For example, Figures \ref{FIG: AIO_EVC_a}-\ref{FIG: EVC_a_2bii_2pics} show regions of $(x_p,y_p)$ (marked in green and labeled) in which tolerance will be exhibited. Interestingly, some of the tolerance regions shown have infinite area (see Figures \ref{FIG: AIO_EVC_b}, \ref{FIG: AIO_EVC_c}, and \ref{FIG: EVC_a_2bii_2pics}). Considering points $(x_p,y_p)$ in the first quadrant and to the right of the vertical line $x=x_r$, we see that in most cases (for instance, see the panels in Figure \ref{FIG: AIO_EVC_b}), the farther $x_p$ is from $x_r$, the higher the $y_p$ value needs to be in order for $(x_p,y_p)$ to fall in the green tolerance region. (As shown using time interval estimates this is also true in nonlinear systems.)  Correspondingly, for some $(x_p,y_p)$ in a tolerance region, tolerance might only occur in the asymptotic limit, which may not be of interest in applications, especially considering that the degree or magnitude of tolerance produced is negligible by then. In other examples (for instance, see the middle and bottom panels of Figure \ref{FIG: AIO_EVC_a}), the $y$-value needs to be sufficiently low for tolerance to occur, although there is a limit on how low it can be because of the non-negativity requirement on $y$.

The issue of tolerance, as defined in this work, does not appear to have received previous analytical treatment. Research has been done on isochronicity, which considers whether multiple phenomena occur within the same interval of time \cite{Sabatini04, Gine05}. For instance, in \cite{Sabatini04}, Sabatini defines a critical point classified as a center to be \textit{isochronous} if every nontrivial cycle within a neighborhood of the critical point has the same period. Although Sabatini noted that the definition of isochronicity does not
require proximity to a critical point, his work and other previous research
appears to have been restricted to locating isochronous sections of autonomous differential systems that are oscillatory in nature \cite{Sabatini04, Gine05, GineLLibre05}.
While tolerance is a natural extension of isochronicity, in that it can be cast in terms of a comparison of the relative passage times of trajectories between sections, previous work has not, to our knowledge, made such comparisons between trajectories converging to a stable node, as we have done here.

Another related area of study is the consideration of phase response curves (PRCs), as are commonly used in the analysis of neuronal systems. PRCs are calculated to determine how instantaneous perturbations shift the phase of a periodic oscillation. Although the assumption of intrinsic oscillatory behavior distinguishes the use of PRCs from the tolerance phenomenon that we consider,
a relation between the two emerges if one thinks of an instantaneous perturbation as a preconditioning event and considers how the subsequent dynamics,  during a specific window of time, compares to the unperturbed oscillation. Depending on where the perturbation occurs in the oscillation cycle, the occurrence of a stereotyped event, such as a peak, can be advanced or delayed relative to the unperturbed case, and the former could be considered as a form of tolerance, in that it would represent a speeding up of the event of interest. Figure \ref{FIG: Voltage} illustrates an example of such a phase advance,  using the Morris-Lecar model. In theory, isoclines could be used to predict whether perturbations in a given system speed up or advance an oscillation. Past work has pointed out that PRCs corresponding to infinitesimal perturbations are intimately related to isochrons, or curves of constant asymptotic phase \cite{winfree}, but these are different than isoclines. Indeed, analysis developed previously for PRCs (see e.g. \cite{bard_mbi} for a review) sheds little light on tolerance under the assumptions that we consider, since there is no intrinsic oscillation involved here. Note that the absence of an oscillation is quite characteristic of the types of models that motivated this work (e.g. \cite{Day06}), since perturbations typically lead to a non-oscillatory decay to a healthy critical point or approach to one or more unhealthy, perhaps lethal, critical points.

\begin{figure}[h]
\begin{center}
  \includegraphics[width=3in]{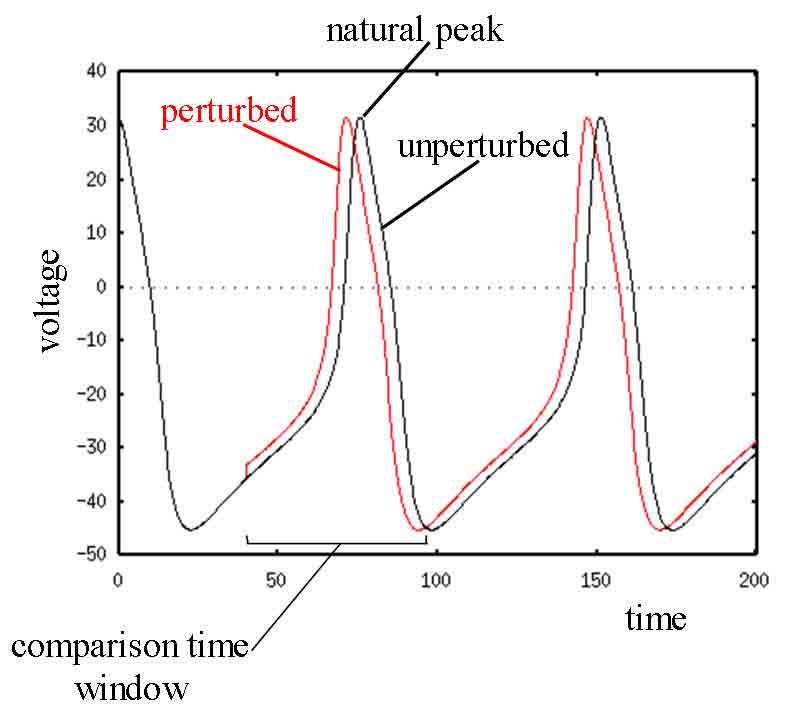}
  \caption[Tolerance in the voltage equation of the Morris-Lecar model]{Tolerance in the voltage equation of the Morris-Lecar model seen during a specific comparison time window.}\label{FIG: Voltage}
\end{center}
\end{figure}

The work presented here looks exclusively at two dimensional ODE systems. Some of the results and techniques considered do not naturally extend to higher dimensions, unfortunately. In \cite{Day06} it was shown that the presence and magnitude of tolerance in a four dimensional ODE model of the acute inflammatory response  depended not only on inhibition but also on the relative levels of the variable being inhibited when various doses of endotoxin were administered, through various feedback effects in the system.
In the $2$D linear case, the relationship between the level of the inhibitory variable and the relative level of the inhibited variable is most clearly seen. Refining the results for the $2$D nonlinear case and extending the results for both linear and nonlinear systems to dimensions greater than two remains to be done. The present work, however, yields new and potentially useful insight into the behavior of transients away from the critical points to which they eventually converge, in the context of some types ODE systems that commonly arise in models of biological systems.

\medskip

{\bf Acknowledgments.}  This work was supported in part by NIH Award  R01-GM67240 (JD, JR), by the Intramural Research Program of
the NIH, NIDDK (CC), and by NSF Awards DMS0414023 (JR), DMS0716936 (JR), and Agreement No. 0635561 (JD).  We thank Gilles Clermont and Yoram Vodovotz for discussions on tolerance in the acute inflammatory response.

\newpage
\bibliographystyle{siam}
\bibliography{Day_Article}

\end{document}